\newtheorem{theorem}{Theorem}[section]
\newtheorem{lemma}[theorem]{Lemma}
\newtheorem{proposition}[theorem]{Proposition}
\theoremstyle{definition}
\newtheorem{remark}[theorem]{Remark}
\numberwithin{equation}{section}
\newtheorem{conjecture}[theorem]{Conjecture}
\def\bR{\mathbb{R}}
\def\cP{\mathcal{P}}
\def\cI{\mathcal{I}}
\def\cL{\mathcal{L}}
\def\iO{\textit{O}}
\begin{document}


\baselineskip=17pt


\title[Asymmetric estimates and the sum-product problems]{Asymmetric estimates and the sum-product problems}

\author[B. Xue]{Boqing Xue}
\address{ShanghaiTech University\\
393 Middle Huaxia Road\\
201210 Shanghai, China}
\email{xuebq@shanghaitech.edu.cn}

\date{}

\begin{abstract}
We show two asymmetric estimates, one on the number of collinear triples and the other on that of solutions to $(a_1+a_2)(a_1^{\prime\prime\prime}+a_2^{\prime\prime\prime})=(a_1^\prime+a_2^\prime)(a_1^{\prime\prime}+a_2^{\prime\prime})$.
As applications, we improve results on difference-product/division estimates and on Balog-Wooley decomposition: For any finite subset $A$ of $\bR$,
\[
\max\{|A-A|,|AA|\} \gtrsim |A|^{1+105/347},\quad \max\{|A-A|,|A/A|\} \gtrsim |A|^{1+15/49}.
\]
Moreover, there are sets $B,C$ with $A=B\sqcup C$ such that
\[
\max\{E^+(B),\, E^\times (C)\} \lesssim |A|^{3-3/11}.
\]
\end{abstract}

\subjclass[2020]{Primary 11B30; Secondary 11B13, 11B75, 05B10}

\keywords{Balog-Wooley decomposition, sum-product, asymmetric estimate}

\maketitle

\section{Introduction}

In additive combinatorics, the Szemer\'{e}di-Trotter theorem plays an important role. For a set $A$, we use both $|A|$ and $\#A$ to denote the cardinality of $A$. For a set $\cP$ of points and a set $\cL$ of lines in $\bR^2$, define their incidences by $\cI(\cP,\cL)=\{(p,l)\in \cP\times \cL:\, p\in l\}$.

\begin{theorem}[Szemer\'{e}di-Trotter] \label{thm_ST}
Let $\cP$ be a finite set of points and $\cL$ be a finite set of lines in $\bR^2$. Then
\begin{equation} \label{eq_ST_2/3}
\cI(\cP,\cL)\leq 4|P|^{2/3}|L|^{2/3}+4|P|+|L|.
\end{equation}
\end{theorem}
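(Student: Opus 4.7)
The plan is to prove the Szemer\'edi-Trotter bound by the classical crossing number method of Sz\'ekely. First I would recall the crossing number inequality of Ajtai-Chv\'atal-Newborn-Szemer\'edi: for any simple graph $G$ with $n$ vertices and $e \geq 4n$ edges drawn in the plane, the number of crossings satisfies $\mathrm{cr}(G) \geq e^3/(64n^2)$. The quickest derivation uses Euler's formula $e \leq 3n-6$ for planar graphs (so $\mathrm{cr}(G) \geq e - 3n$ in general), applied to a random induced subgraph where each vertex is retained independently with probability $p = 4n/e$, and then optimized.

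Second, I would associate to the incidence configuration a suitable drawing. Take the vertex set to be $\mathcal{P}$, and for each $\ell \in \mathcal{L}$, order the points of $\mathcal{P} \cap \ell$ along $\ell$ and draw an edge between consecutive pairs; if $\ell$ contains $k_\ell \geq 1$ points of $\mathcal{P}$, it contributes $k_\ell - 1$ edges. Thus the resulting multigraph (which can be made simple by deleting duplicates from pairs of parallel configurations, losing at most $|\mathcal{L}|$ in edge count) has
\[
e \;\geq\; \mathcal{I}(\mathcal{P},\mathcal{L}) - |\mathcal{L}|,
\]
and its drawing has at most $\binom{|\mathcal{L}|}{2} \leq |\mathcal{L}|^2/2$ crossings, since two distinct lines meet in at most one point.

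Third, I would split into two cases. If $e < 4|\mathcal{P}|$ then $\mathcal{I}(\mathcal{P},\mathcal{L}) \leq 4|\mathcal{P}| + |\mathcal{L}|$ directly, accounting for the additive terms in \eqref{eq_ST_2/3}. Otherwise, plugging into the crossing number inequality gives
\[
\frac{e^3}{64|\mathcal{P}|^2} \;\leq\; \mathrm{cr}(G) \;\leq\; \frac{|\mathcal{L}|^2}{2},
\]
so $e \leq 4 \, |\mathcal{P}|^{2/3}|\mathcal{L}|^{2/3}$ after taking cube roots, and hence $\mathcal{I}(\mathcal{P},\mathcal{L}) \leq 4|\mathcal{P}|^{2/3}|\mathcal{L}|^{2/3} + |\mathcal{L}|$. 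Combining both cases yields the claimed bound.

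The main obstacle is not the logic but bookkeeping the explicit constant $4$ on the leading term: one must carry the sharp form of the crossing number inequality (with the factor $1/64$ rather than the weaker $1/29$ sometimes seen) through the optimization of the sampling probability. All other steps are robust, and the argument requires nothing deeper about $\mathbb{R}^2$ than that two distinct lines meet at most once and that planar graphs satisfy $e \leq 3n - 6$.
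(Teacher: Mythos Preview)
Your argument is the standard Sz\'ekely crossing-number proof and it is essentially correct. One small simplification: you do not need to worry about deleting duplicate edges ``from pairs of parallel configurations''. Two distinct points of $\mathcal{P}$ lie on at most one common line, so the consecutive-pair graph is automatically simple; no edge loss occurs there. With that observation the computation $e^3/(64|\mathcal{P}|^2)\le |\mathcal{L}|^2/2$ gives $e\le 32^{1/3}|\mathcal{P}|^{2/3}|\mathcal{L}|^{2/3}<4|\mathcal{P}|^{2/3}|\mathcal{L}|^{2/3}$, and combining with $\mathcal{I}(\mathcal{P},\mathcal{L})\le e+|\mathcal{L}|$ and the edge-poor case yields the stated bound.

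As for comparison with the paper: the paper does not prove Theorem~\ref{thm_ST} at all. The Szemer\'edi--Trotter theorem is quoted there as a standard background tool (the consequences in Lemmas~\ref{lem_rich_lines} and~\ref{lem_rich_points} are cited from Tao--Vu), and the paper's own contributions begin with the asymmetric estimates in Section~2. So there is no ``paper's own proof'' to match here; your write-up supplies a complete argument where the paper simply invokes the result.
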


A consequence of the Szemer\'{e}di-Trotter theorem shows an upper bound for the number of collinear triples in the plane. For three finite sets $A_1,A_2,A_3\subseteq \bR$, let $T^o(A_1,A_2,A_3)$ be the number of collinear triples $(u_1,u_2,u_3)$, where $u_i\in A_i\times A_i$ $(i=1,2,3)$ and $u_i\neq u_j$ $(1\leq i<j\leq 3)$. Moreover, define
\begin{align*}
T(A,B,C)=\#\{(a_1,a_2,b_1,b_2,&c_1,c_2)\in A\times A\times B\times B\times C\times C:\, \\
& (b_1-a_1)(c_2-a_2)=(c_1-a_1)(b_2-a_2)\}.
\end{align*}
Denote $T^o(A)=T^o(A,A,A)$ and $T(A)=T(A,A,A)$ for simplicity. Then (see \cite[Corollary 8.9]{TV})
\begin{equation} \label{eq_sym_1}
T(A)\lesssim |A|^4
\end{equation}
for any finite set $A$. Here $a\lesssim b$ means that $a=\iO\left(b(\log b)^c\right)$ for some absolute constant $c>0$. And we write $a\approx b$ if both $a\lesssim b$ and $b\lesssim a$ hold.

When $|A_1|\leq |A_2|\leq |A_3|$, Shkredov \cite{Shk17} showed an asymmetric bound
\[
T(A_1,A_2,A_3) \lesssim |A_2|^2|A_3|^2.
\]
In this paper, we improve this bound as the following.
\begin{theorem} \label{thm_asy_1}
Suppose that $A_1,A_2,A_3$ are three finite subsets of $\bR$ with $|A_1|\leq |A_2|\leq |A_3|$. Then
\begin{equation} \label{eq_asy_1}
T^o(A_1,A_2,A_3) \lesssim |A_1||A_2|^{5/3}|A_3|^{4/3}.
\end{equation}
Moreover,
\[
T(A_1,A_2,A_3) \lesssim |A_1||A_2|^{5/3}|A_3|^{4/3}+|A_1|^2|A_3|^2.
\]
\end{theorem}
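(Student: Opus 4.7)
The plan is to apply the Szemer\'{e}di--Trotter theorem (Theorem~\ref{thm_ST}) to the line sum underlying $T^o$ after a dyadic decomposition, handling the three point sets $P_i:=A_i\times A_i$ symmetrically and exploiting $|A_1|\le|A_2|$ only at the very end. Starting from
\[
T^o(A_1,A_2,A_3)\;\le\;\sum_\ell |\ell\cap P_1|\,|\ell\cap P_2|\,|\ell\cap P_3|,
\]
with the sum running over lines $\ell\subset\bR^2$, I would split the right-hand side dyadically: for dyadic parameters $(r,s,t)$, let $L_{r,s,t}$ denote the set of lines with $|\ell\cap P_i|$ in the matching dyadic range. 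Then $T^o\lesssim\sum_{r,s,t}rst\,|L_{r,s,t}|$.

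In the main regime $r,s,t\ge2$, Theorem~\ref{thm_ST} applied separately to each $P_i$ gives $|L_{r,s,t}|\lesssim|A_i|^{4}/r_i^{3}$, where $(r_1,r_2,r_3):=(r,s,t)$. Taking the geometric mean of these three bounds with equal weights $1/3$ and multiplying by $rst$ produces
\[
rst\,|L_{r,s,t}|\;\lesssim\;\bigl(|A_1|\,|A_2|\,|A_3|\bigr)^{4/3},
\]
which is independent of $r,s,t$. Summing over the $O(\log^3)$ dyadic scales introduces only polylogarithmic losses, and the inequality $|A_1|^{1/3}\le|A_2|^{1/3}$ converts $|A_1|^{4/3}|A_2|^{4/3}$ into $|A_1|\,|A_2|^{5/3}$, yielding the announced bound on the main piece.

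The chief obstacle is the boundary regime with some $r_i=1$, where the Szemer\'{e}di--Trotter bound $|L_{\ge r_i}|\lesssim|P_i|^2/r_i^3$ degenerates. The critical case is $r=1$; here the remaining two Szemer\'{e}di--Trotter bounds alone are insufficient, and one introduces the pencil estimate
\[
|L_{1,s,t}|\;\le\;\sum_{u_1\in P_1}\#\{\ell\ni u_1:|\ell\cap P_2|\ge s\}\;\le\;|A_1|^2|A_2|^2/s.
\]
A weighted geometric mean with exponents $(\tfrac12,\tfrac16,\tfrac13)$ applied to this pencil bound and the Szemer\'{e}di--Trotter bounds $|A_2|^4/s^3$, $|A_3|^4/t^3$ gives $st\,|L_{1,s,t}|\lesssim|A_1|\,|A_2|^{5/3}|A_3|^{4/3}$ independently of $s,t$. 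The cases $s=1$ and $t=1$, together with configurations having several boundary indices, require only the Szemer\'{e}di--Trotter bounds combined with $|A_1|\le|A_2|\le|A_3|$ and are handled by analogous optimization. Assembling all contributions proves the stated bound for $T^o$.

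Finally, the difference $T-T^o$ comes from degenerate tuples with $u_i=u_j$ for some $i\ne j$; each such configuration satisfies the collinearity equation automatically and contributes at most $|A_i\cap A_j|^2\,|A_k|^2\le|A_1|^2|A_3|^2$, which accounts for the extra summand in the bound for $T(A_1,A_2,A_3)$.
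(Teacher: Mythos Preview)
Your approach is essentially the paper's own: both bound $T^o$ by $\sum_\ell\prod_i|\ell\cap P_i|$, separate the cases $|\ell\cap P_i|=1$ versus $\ge2$, apply Szemer\'edi--Trotter for the latter, and invoke a pencil/incidence bound exploiting $P_1$ to handle the critical $r=1$ regime; the paper packages this via H\"older on the moment sums $\sum_\ell\alpha_{i,\ell}^p$ (its Lemma~\ref{lem_asy1}) while you argue dyadically with weighted geometric means, but the content is identical, and your treatment of $T-T^o$ via point coincidences is in fact slightly cleaner than the paper's coordinate-based case split.

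Two places need tightening. First, restrict the line sum at the outset to the finite family $\cL$ of lines supporting an actual collinear triple (otherwise, whenever $P_1\cap P_2\cap P_3\ne\emptyset$, the sum over \emph{all} lines in $\bR^2$ diverges); this also makes the bound $|\cL|\le|A_1|^2|A_2|^2$ available. Second, your claim that the multi-boundary cases need ``only the Szemer\'edi--Trotter bounds'' is slightly off: for $r=s=1,\ t\ge2$ the lone estimate $|L_{1,1,t}|\ll|A_3|^4/t^3$ gives $\sum_t t\,|L_{1,1,t}|\ll|A_3|^4$, which can exceed $|A_1||A_2|^{5/3}|A_3|^{4/3}$. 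One must combine it with the trivial count $|L_{1,1,t}|\le|\cL|\le|A_1|^2|A_2|^2$ via a $(2/3,1/3)$ geometric mean, exactly as the paper does for its term $S_{1,1,j_3}$; the same trivial count handles $r=s=t=1$. With these routine additions your argument goes through.
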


The upper bound in \eqref{eq_asy_1} is not symmetric in $|A_1|,|A_2|,|A_3|$. One might expect to get a bound like $|A_1|^{4/3}|A_2|^{4/3}|A_3|^{4/3}$. However, this symmetric estimate does not hold in general. Let $A$ be a set such that $0\notin A$ and $E^\times(A)\gg |A|^3$, where
\[
E^\times (A) =\{(a_1,a_2,a_3,a_4)\in A^4:\, a_1a_2=a_3a_4\}.
\]
Take $A_1=\{0\}$ and $A_2=A_3=A$. Then
\[
(0,0),\quad (a,b),\quad (c,d),
\]
where $a,b,c,d\in A$, $ad=bc$ and $a\neq c$, are collinear triples counted by $T^o(\{0\},A,A)$. Clearly, we have
\[
T^o(\{0\},A,A) \geq E^\times(A)-|A|^2 \gg |A|^3,
\]
while $1^{4/3}|A|^{4/3}|A|^{4/3} = |A|^{8/3}$. The asymmetric bound \eqref{eq_asy_1} is tight in this example.

\medskip

Next, let us study the number $R(Z;A_1,A_2)$ of solutions to
\[
\frac{a_1+a_2}{a_1^\prime+a_2^\prime}=\frac{a_1^{\prime\prime}+a_2^{\prime\prime}}{a_1^{\prime\prime\prime}+a_2^{\prime\prime\prime}}\in Z,
\]
where $a_i,a_i^\prime,a_i^{\prime\prime},a_i^{\prime\prime\prime}\in A_i$ $(i=1,2)$. In \cite{MRS14}, Murphy, Roche-Newton and Shkredov proved a symmetric estimate:
\begin{equation} \label{eq_sym_2}
R(\bR;A_1,A_2) \lesssim |A_1|^3|A_2|^3.
\end{equation}
This bound is tight when $A_1=\{0\}$ and $A_2$ is a finite set satisfying $E^\times(A_2)\gg |A_2|^3$. However, by adding a constrain on the value set $Z$, we can prove a better asymmetric estimate.

\begin{theorem} \label{thm_asy_2}
Let $A_1,A_2,Z$ be finite subsets of $\bR$ such that $|A_1|\leq |A_2|$ and $|Z|\lesssim |A_1|^2$. Then
\begin{equation} \label{eq_asy_2}
R(Z;A_1,A_2) \lesssim |A_1|^{10/3}|A_2|^{8/3}.
\end{equation}
\end{theorem}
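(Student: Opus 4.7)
The plan is to express $R(Z;A_1,A_2)$ as a weighted count of pairs of points lying on a line through the origin whose slope belongs to $Z$, and then apply Theorem~\ref{thm_asy_1} after a dyadic decomposition of the representation function of the sumset. Writing the defining equation as the vanishing of a $2\times 2$ determinant, one sees $R(Z;A_1,A_2)=\sum_{z\in Z}f(z)^2$, where $f(z)=\#\{(a_1,a_2,a_1',a_2')\in A_1\times A_2\times A_1\times A_2: a_1+a_2=z(a_1'+a_2')\}$. Setting $S=A_1+A_2$ and $r(s)=\#\{(a_1,a_2)\in A_1\times A_2: a_1+a_2=s\}$, we have $f(z)=\sum_{(s',s)\in S\times S,\, s=zs'} r(s)r(s')$. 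Consequently $R(Z;A_1,A_2)$ is the weighted incidence count $\sum_{z\in Z}\bigl(\sum_{(s',s)\in\ell_z\cap S^2} r(s')r(s)\bigr)^2$, where $\ell_z=\{y=zx\}$.

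For the dyadic step I would set $S_k=\{s\in S:r(s)\approx 2^k\}$ (so $|S_k|\lesssim|A_1||A_2|/2^k$), let $N_{k,l}(z)=|\{(s',s)\in S_l\times S_k:s=zs'\}|$, and write $f(z)\approx\sum_{k,l}2^{k+l}N_{k,l}(z)$. Cauchy--Schwarz over the $O(\log^2|A_1|)$ dyadic pairs reduces the task to bounding $\sum_{z\in Z}N_{k,l}(z)^2$ for each $(k,l)$. Geometrically, $\sum_{z\in Z}N_{k,l}(z)^2$ counts pairs of points in $S_l\times S_k$ lying on a common line through the origin with slope in $Z$; viewing this as a weighted collinear-triples count, I would bound it by the minimum of (a) Theorem~\ref{thm_asy_1} applied with first set $\{0\}$ and the others $S_k\cup S_l$, giving $\lesssim(|S_k|+|S_l|)^3$, and (b) the trivial bound $|Z|\min(|S_k|,|S_l|)^2\lesssim|A_1|^2\min(|S_k|,|S_l|)^2$, using that each $\ell_z$ meets $S_l\times S_k$ in at most $\min(|S_k|,|S_l|)$ points. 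Balancing the two bounds across dyadic regimes, and invoking the moment bound $\sum_k 2^k|S_k|\leq|A_1||A_2|$, should deliver $R(Z;A_1,A_2)\lesssim|A_1|^{10/3}|A_2|^{8/3}$.

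The principal difficulty is that Theorem~\ref{thm_asy_1} applied with a singleton first set coincides with the trivial bound coming from the fact that every nonzero point lies on a unique line through the origin, so the asymmetric $|A_2|^{4/3}$ savings in the final exponent must arise from the interaction between the dyadic profile of $r$ and the constraint $|Z|\lesssim|A_1|^2$, rather than from Theorem~\ref{thm_asy_1} alone. Should the plain dyadic balancing miss sharpness in certain regimes, an alternative is to apply Theorem~\ref{thm_asy_1} with a first set of size $\approx|A_1|^2$ encoding the slopes (for instance $A_1'=\{1\}\cup Z$, whose square contains the marker points $(1,z)$ lying on every $\ell_z$); this activates the full asymmetric strength of Theorem~\ref{thm_asy_1}, and is where the heart of the technical work will sit.
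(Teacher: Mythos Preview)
Your proposal has a genuine gap, and the paper's argument proceeds along a different line.

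The paper does \emph{not} pass through the sumset $S=A_1+A_2$ at all. Instead, for each $z$ it introduces the intermediate variable $y=a_1'-za_1=za_2-a_2'$, which factorises
\[
r(z)=\sum_{y} r_1(z,y)\,r_2(z,y),
\]
where $r_1(z,y)$ depends only on $A_1$ and $r_2(z,y)$ only on $A_2$. The points $(z,y)$ lie in a set $\mathcal P$ of size at most $|Z|\,|A_1|^2$, and $r_i(z,y)$ is the number of lines from the pencil $\{a_i'-za_i=y\}$ (respectively $\{za_i-a_i'=y\}$) through $(z,y)$. One then bounds $\sum_{z\in Z}r(z)$ by H\"older together with the rich-point form of Szemer\'edi--Trotter applied separately to the $|A_1|^2$ and the $|A_2|^2$ line families; this yields Lemma~\ref{lem_asy2}. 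Applying that lemma to level sets $Z_t=\{z:r(z)\ge t\}$ and using $|Z_t|\le|Z|\lesssim|A_1|^2$ gives $|Z_t|\lesssim |A_1|^{10/3}|A_2|^{8/3}t^{-2}$, and summing in $t$ finishes the proof.

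In your scheme, once you collapse $(a_1,a_2)\mapsto s=a_1+a_2\in S$ you can no longer separate the $A_1$- and $A_2$-contributions, and this is precisely what drives the asymmetric saving $|A_2|^{8/3}$ in place of $|A_2|^3$. Concretely: after Cauchy--Schwarz over dyadic pairs, for the block with $2^k\approx 2^l\approx |A_1|$ one has $|S_k|,|S_l|\le|A_2|$, and the best bound available for $\sum_{z\in Z}N_{k,l}(z)^2$ from rich lines through the origin plus $|Z|\lesssim|A_1|^2$ is $|Z|^{1/3}(|S_k||S_l|)^{4/3}\lesssim |A_1|^{2/3}|A_2|^{8/3}$. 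Multiplying by the weight $4^{k+l}\approx|A_1|^4$ gives $|A_1|^{14/3}|A_2|^{8/3}$, a loss of $|A_1|^{4/3}$ over the target. Neither the trivial bound $|Z|\min(|S_k|,|S_l|)^2$ nor the alternative of encoding $Z$ into a first set $\{1\}\cup Z$ for Theorem~\ref{thm_asy_1} repairs this: the former still loses a power of $|A_1|$ in the same regime, and the latter is not the right geometric configuration (the second and third points you need lie in $S_l\times S_k$, not in $S_l\times S_l$ and $S_k\times S_k$, so $T^o$ does not control $\sum_z N_{k,l}(z)^2$ directly). The missing idea is the $y$-variable decoupling; without it your dyadic balancing cannot close.
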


\noindent We will apply these two asymmetric bounds in sum-product type problems.

\medskip

As a ring, the set $\bR$ of real numbers does not contain non-trivial finite subrings. So any finite subset of $\bR$ can not have good additive structure and good multiplicative structure simultaneously. This phenomenon is characterized by the folklore Erd\H{o}s-Szemer\'{e}di sum-product conjecture \cite{ErSz83}.

\begin{conjecture} [Erd\H{o}s-Szemer\'{e}di] \label{conj}
Let $\delta< 1$ be given. Then for any finite $A\subseteq \bR$, one has
\begin{equation} \label{eq_ET_conj}
\max\{|A+A|,\,|AA|\} \gtrsim |A|^{1+\delta}.
\end{equation}
\end{conjecture}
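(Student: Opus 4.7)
The plan is to proceed by contradiction: fix $\delta<1$ and suppose there is an infinite sequence of finite sets $A\subseteq \bR$ along which $\max\{|A+A|,|AA|\}\leq |A|^{1+\delta}$, so that both the additive and the multiplicative doubling constants are bounded by $K:=|A|^{\delta}$. The goal is to derive a contradiction for every $\delta<1$ by combining the asymmetric machinery developed in this paper with an iterative energy scheme that trades additive information for multiplicative information and back.

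First I would apply the Plünnecke-Ruzsa inequality to control all iterated sumsets $|nA-mA|\leq K^{n+m}|A|$ and, multiplicatively, $|A^n/A^m|\leq K^{n+m}|A|$, and pass to dyadic level sets of the difference-representation function on $A-A$ and of the product-representation function on $AA$. Second, applying the Szemer\'{e}di-Trotter theorem to $A\times A$ with appropriate families of lines, I would control $E^+(A)$ and $E^{\times}(A)$ through these level sets. The asymmetric collinear-triple bound of Theorem~\ref{thm_asy_1}, used with $A_1$ a level set of difference multiplicities, $A_2=A$, and $A_3$ a level set of product multiplicities, would then yield a nontrivial bound on the mixed energy between $A-A$ and $AA$, with the gain coming from the exponent $5/3$ on the middle set rather than the symmetric $4/3$ on all three.

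Next I would feed this mixed energy bound, together with the quotient estimate of Theorem~\ref{thm_asy_2} applied to a set $Z\subseteq(A+A)/(A+A)$ of cardinality $\lesssim |A|^2$, into a Balog-Wooley style decomposition $A=B\sqcup C$ in which $E^+(B)$ and $E^{\times}(C)$ are simultaneously small. The standard Cauchy-Schwarz chain $|B|^4\leq E^+(B)\,|B+B|$ together with its multiplicative analogue would then force either $|B+B|$ or $|C\cdot C|$ to exceed $K|A|$, contradicting the hypothesis for a suitable $\delta$. The argument would be iterative: each round improves $\delta$ to some $\delta'=f(\delta)>\delta$ via the asymmetric exponent gains, and the iterates must eventually surpass every prescribed threshold below $1$.

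The main obstacle, and the step on which the entire scheme hinges, is controlling the loss incurred each time one passes between additive and multiplicative information through the incidence layer. In all iterative schemes available today the map $\delta\mapsto f(\delta)$ has a fixed point strictly less than one, because the asymmetric exponents in \eqref{eq_asy_1} and \eqref{eq_asy_2}, while stronger than their symmetric predecessors, still fall short of the ideal $|A_i|^{4/3}$ needed to fully offset the Plünnecke-Ruzsa doubling gain on the other side. Reaching every $\delta<1$ therefore requires either a lossless exchange of additive for multiplicative energy, or a substantially stronger Szemer\'{e}di-Trotter-type bound in the extremal asymmetric regime where one factor is much smaller than the others. Identifying and executing such a step is the point at which the proof would either succeed in its full form, or, as in the present paper and the previous literature, close partially and yield a concrete numerical exponent of the kind recorded in the abstract.
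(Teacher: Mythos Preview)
The statement you are attempting to prove is Conjecture~\ref{conj}, the Erd\H{o}s--Szemer\'{e}di sum-product conjecture. It is an \emph{open problem}; the paper does not prove it, and neither does anyone else. The paper establishes partial progress of the form \eqref{eq_ET_conj} for specific explicit values of $\delta$ well below $1$ (via Theorems~\ref{thm_E_3^+}, \ref{thm_difference}, and the decomposition results), and the surrounding discussion merely surveys the history of such exponents.

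Your proposal is not a proof but a sketch of an iterative scheme that you yourself correctly diagnose as failing: the map $\delta\mapsto f(\delta)$ arising from any currently known combination of Szemer\'{e}di--Trotter, Pl\"unnecke--Ruzsa, and the asymmetric estimates of Theorems~\ref{thm_asy_1} and \ref{thm_asy_2} has a fixed point strictly below $1$. The paragraph where you identify this as ``the main obstacle'' is exactly where the argument collapses --- there is no known lossless additive-to-multiplicative energy exchange, and the asymmetric bounds in this paper, while sharper than their predecessors, do not close that gap. So what you have written is an honest description of why the conjecture remains open, not a proof of it. There is nothing to compare against in the paper, because the paper contains no proof of Conjecture~\ref{conj}.
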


\noindent Here, for two sets $A$, $B$ and an operator $\circ\in \{+,-,\cdot,/\}$,
\[
A\circ B =\{a\circ b:\, a\in A,\,b\in B\}.
\]
And we abbreviate $A\cdot A$ by $AA$ for simplicity. When $A$ is an arithmetic progression, then $|A+A|\ll |A|$ and $|AA|\gtrsim |A|^2$. On the other side, when $A$ is a geometric progression, then $|AA|\ll |A|$ and $|A+A|\gtrsim |A|^2$.

For two sets $A$ and $B$, define
\[
r_{A\circ B}(x) =\#\{(a,b)\in A\times B:\, a\circ b=x\}.
\]
Other quantities that reflects the structure of sets $A$ and $B$ include energies
\[
E_k^+(A,B) = \sum\limits_{x}r_{A-B}^k(x),\quad E_k^\times(A,B) = \sum\limits_{x}r_{A/B}^k(x)
\]
for $k>1$. (Indeed, one may also regard $E_0^+(A,B)=|A+B|$.) We write $E_k^+(A)=E_k^+(A,A)$ and $E_k^\times(A)=E_k^\times(A,A)$ for simplicity. In particular, one has $E^\times (A)=E_2^\times(A)$, and we denote $E^+(A)=E^+_2(A)$. Moreover, for $k>1$, the quantities
\[
d^+_k(A) =\sup\limits_{B\neq \emptyset} \frac{E_k^+(A,B)}{|A||B|^{k-1}},\quad d^\times_k(A) =\sup\limits_{B\neq \emptyset} \frac{E_k^\times(A,B)}{|A||B|^{k-1}}
\]
are also quite helpful in describing the structure of a set $A$. Especially, the quantity $d_3^+$ is related to so-called Szemer\'{e}di-Trotter set. When $A$ has strong additive structure, quantities $E_k^+(A)$ are large. When $A$ has weak additive structure, one might expect them to be small. However, this is not the case. Consider a set $A=B\sqcup C$. Suppose that $B$ has weak additive structure, and $C$ has strong additive structure but very small cardinality. Then $E_k^+(B)$ can be small. But for energies with higher moments, the contribution of $C$ in $E_k^+(C)$ may be significantly enhanced. And $E_k^+(A)$ might be large as well. So a suitable way to consider energies in sum-product type problems, known as Balog-Wooley decomposition \cite{BalWoo17}, is to decompose $A$ into two sets, one with little additive structure and the other with little multiplicative structure.

\begin{conjecture} [Balog-Wooley] \label{conj_2}
Let $\eta\leq 2/3$ be given. Then for any finite set $A\subseteq \bR$, there are sets $B,C$ such that $A=B\sqcup C$ and
\begin{equation} \label{eq_BW_conj}
\max\{E^+(B),\, E^\times (C)\} \lesssim |A|^{3-\eta}.
\end{equation}
\end{conjecture}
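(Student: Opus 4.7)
The final statement is Conjecture~\ref{conj_2}, the full Balog-Wooley decomposition with $\eta$ as large as $2/3$. Since $2/3$ matches the Szemer\'edi-Trotter exponent, any proof strategy must extract the full strength of incidence geometry through asymmetric means. My plan is to combine Theorems~\ref{thm_asy_1} and \ref{thm_asy_2} with a recursive peeling procedure based on a dyadic analysis of multiplicative energy.

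First I would set up a dichotomy. Partition $A$ dyadically according to the popularity of each element under the multiplicative representation function $r_{A/A}$: for each dyadic $t$, let $A_t$ be the set of elements at which $r_{A/A}$ has magnitude roughly $t$. If $|A_t|\cdot t$ exceeds a threshold calibrated against $|A|^{3-2/3}$, the elements of $A_t$ must be placed into the multiplicatively rigid part $C$; the remaining elements constitute the candidate $B$. Controlling $E^\times(C)$ then reduces to bounding fourth-moment multiplicative energies on concentrated level sets, while controlling $E^+(B)$ reduces to bounding additive energies on sets where the multiplicative representations are uniformly thin.

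Next I would use the asymmetric estimates as the engine. For $E^\times(C)$, apply Theorem~\ref{thm_asy_2} with $A_1$ being a small multiplicatively rich piece $C_i\subset C$ and $A_2=A$: the bound $|A_1|^{10/3}|A_2|^{8/3}$ on the number of solutions to $(a_1+a_2)(a_1'''+a_2''')=(a_1'+a_2')(a_1''+a_2'')$ with constrained ratio set, combined with a Cauchy-Schwarz manipulation, yields an upper bound for $E^\times(C)$ summed over all peeled levels. Dually, to bound $E^+(B)$ I would pass to logarithms and apply Theorem~\ref{thm_asy_1}: the asymmetric collinear-triple bound $|A_1||A_2|^{5/3}|A_3|^{4/3}$ is the asymmetric analogue of the $|A|^4$ estimate from Szemer\'edi-Trotter, and its exponents are aligned with the target $\eta=2/3$.

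The main obstacle is sharpness. A direct accounting of the exponents $5/3,4/3$ and $10/3,8/3$ in Theorems~\ref{thm_asy_1} and \ref{thm_asy_2}, once fed through the peeling argument, recovers at best the paper's exponent $\eta=3/11$, and the losses at each stage of the recursion appear structural rather than merely technical. To close the gap to $2/3$ one would need either a fully symmetric asymmetric bound $T(A_1,A_2,A_3)\lesssim |A_1|^{4/3}|A_2|^{4/3}|A_3|^{4/3}$ valid under hypotheses excluding the degenerate configuration with $A_1=\{0\}$ discussed after Theorem~\ref{thm_asy_1}, or a structural classification showing that any decomposition-extremal set must be essentially an arithmetic or geometric progression. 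Neither tool is presently available, which is why the full Balog-Wooley conjecture remains open at the claimed exponent; the plan above should therefore be read as a roadmap indicating where the quantitative bottlenecks lie rather than as a working proof.
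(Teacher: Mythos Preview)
The statement is Conjecture~\ref{conj_2}, which the paper does \emph{not} prove; it is recorded as an open problem, and the paper only establishes the partial result Theorem~\ref{thm_BalWoo_improvement} with $\eta=3/11$. You correctly recognise this in your final paragraph, so there is no ``paper's proof'' to compare against, and your honest conclusion that the plan is a roadmap rather than a proof is the right assessment.

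That said, even as a roadmap your sketch diverges from how the paper actually reaches $\eta=3/11$, and some of the proposed steps would not work as written. The paper does not set up a dichotomy on the popularity of $r_{A/A}$; instead it proves Lemma~\ref{lemma_E_3^+_basic}, which for any finite $A$ produces a subset $A'$ with $E_3^+(A)^4 E^\times(A')^3\lesssim |A'|^{12}|A|^{10}$, and then iterates this extraction (in the style of \cite{KonShk16}) together with the $\ell^4$-norm inequality of Lemma~\ref{lemma_E_l_4} to assemble $C$. The input from Theorem~\ref{thm_asy_2} enters only inside the proof of Lemma~\ref{lemma_E_3^+_basic}, via the bound on $N$ when $|P|\geq |A|$; Theorem~\ref{thm_asy_1} plays no role in the Balog--Wooley argument at all. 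Your suggestion to ``pass to logarithms and apply Theorem~\ref{thm_asy_1}'' to control $E^+(B)$ is not a meaningful step: taking logarithms converts multiplicative structure to additive structure, so it would help with $E^\times$, not $E^+$, and in any case Theorem~\ref{thm_asy_1} bounds collinear triples in a Cartesian grid, which is not the object that arises. Your diagnosis of the obstruction --- that one would need the symmetric bound $T(A_1,A_2,A_3)\lesssim (|A_1||A_2||A_3|)^{4/3}$, which is false in general --- is reasonable in spirit, but the actual bottleneck in the paper's scheme is the exponent balance in Lemma~\ref{lemma_E_3^+_basic} combined with the passage from $E_3^+$ back down to $E^+$ via $E^+(B)\leq |A|\,E_3^+(B)^{1/2}$.
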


\noindent The example $\{(2m-1)2^j:\, 1\leq m\leq S,\, 1\leq j\leq P\}$ (see \cite{BalWoo17}) shows that the exponent $\eta=2/3$ in \eqref{eq_BW_conj} is tight.


\medskip

Now we introduce current results on Conjecture \ref{conj}. Erd\H{o}s and Szemer\'{e}di \cite{ErSz83} showed that \eqref{eq_ET_conj} holds for some $\delta>0$. Nathanson \cite{Nat97} obtained the first quantitative estimate $\delta=1/31$, which was improved by Ford \cite{For98} to $\delta=1/15$. The application of Szemer\'{e}di-Trotter theorem to study the sum-product type problems was started by Elekes \cite{Ele97}, who obtained \eqref{eq_ET_conj} with $\delta=1/4$. Later, Solymosi \cite{Sol05} considered more complicated incidences and attained $\delta=3/11$. By that time, the best bounds were same if the set $A+A$ is replaced by $A-A$, or/and the set $AA$ is replaced by $A/A$.

In \cite{Sol09}, Solymosi obtained \eqref{eq_ET_conj} with $\delta=1/3$. His approach uses geometry very cleverly: every ratio in $A/A$ corresponds to a line in $\bR^2$ passing the origin, and the addition of points (viewed as vectors) from a pair of lines contribute a point in $(A+A)^2$. In particular, if the pair of lines are carefully chosen, we can obtain points in $(A+A)^2$ without repetition. The estimate $\delta= 1/3$ in \eqref{conj} can be viewed as a standard exponent in the literature. See \cite{LiShen10} and \cite{Xue15} for other related results with $\delta=1/3$. We also remark that this approach does not work for the difference-set $A-A$, since the vector sums only allows the form $(A+A)\times (B+B)$ for two different sets $A$ and $B$.

Until \cite{KonShk15}, Konyagin and Shkredov modified Solymosi's approach and counted some repetitions. They broke the exponent $1/3$ and arose a new round of study. In a later paper \cite{KonShk16}, they showed $\delta=1/3+5/9813$. To prove the sum-product estimates, the symmetric estimate \eqref{eq_sym_1} was applied to obtain
\begin{equation} \label{eq_to_improve_1}
|AA|\gtrsim E^+(A)^4|A|^{-10}.
\end{equation}
A further improvement $\delta=1/3+1/1509$ were obtained by Rudnev, Shkredov and Stevens \cite{RSS16}. The symmetric estimate \eqref{eq_sym_2} is applied to obtain the following improvement of \eqref{eq_to_improve_1}:
\begin{equation} \label{eq_to_improve_2}
|AA|\gtrsim E^+(A)^3|A|^{-7}.
\end{equation}
Indeed, if we use the asymmetric estimate in Theorem \ref{thm_asy_1} in the proof of \eqref{eq_to_improve_1}, then we can obtain \eqref{eq_to_improve_2} directly. Later, Shakan \cite{Sha18} observed that the arguments in \cite{RSS16} actually deals with $E_4^+(A)$. The inequality he used to get $\delta=5/5277$ has the strength compared to
\begin{equation} \label{eq_to_improve_3}
|AA|\gtrsim E_4^+(A)|A|^{-3},
\end{equation}
which can be obtained from Lemma 5.6 of \cite{Sha18}.
At the same time when this paper was finished, Rudnev and Stevens \cite{RudSte20} showed an exciting update on the sum-product bound: $\delta=2/1167$.

Recall the term $|\cP|^{2/3}|\cL|^{2/3}$ on the right-hand side of \eqref{eq_ST_2/3}. The full application of Szemer\'{e}di-Trotter theorem should give the third moment energy. Since an inequality like \eqref{eq_to_improve_1}, \eqref{eq_to_improve_2} or \eqref{eq_to_improve_3} is a key step in the study of Conjecture \ref{conj}, we apply the asymmetric estimate in Theorem \ref{thm_asy_2} to show the following theorem in terms of the third moment energy.

\begin{theorem} \label{thm_E_3^+}
Let $A$ be a finite subset of $\bR$. Then
\begin{equation} \label{eq_to_improve_4}
|AA| \gtrsim E_3^+(A)^{4/3} |A|^{-10/3}.
\end{equation}
\end{theorem}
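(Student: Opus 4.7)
The plan is to adapt the Rudnev--Shkredov--Stevens framework from \cite{RSS16} (which yields $|AA|\gtrsim E^+(A)^3|A|^{-7}$ via the symmetric bound $R(\mathbb{R};A,A)\lesssim|A|^6$ from \cite{MRS14}), replacing that symmetric estimate with the asymmetric Theorem \ref{thm_asy_2} and working with the third-moment energy $E_3^+(A)$ in place of $E^+(A)$. The target inequality rearranges to $E_3^+(A)^4\lesssim|AA|^3|A|^{10}$, which matches exactly the combination of $R\lesssim|A|^6$ with an extra factor $|AA|^3|A|^4$; this suggests arranging a single use of Theorem \ref{thm_asy_2} inside a chain of Cauchy--Schwarz inequalities that brings in $|AA|$ multiplicatively.

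First I would perform a dyadic pigeonhole on $f(s)=r_{A+A}(s)$, selecting $\Delta\geq 1$ and $P\subseteq A+A$ with $f|_P\sim\Delta$, $|P|\Delta^3\gtrsim E_3^+(A)$, and the trivial constraint $|P|\Delta\leq|A|^2$. For each ratio $\lambda\in\mathbb{R}$ set
\[
n_\lambda=\#\{(s_1,s_2)\in P^2:\ s_1/s_2=\lambda\},\qquad \widetilde N_\lambda=\sum_{s:\,\lambda s\in A+A}f(s)f(\lambda s)\geq\Delta^2 n_\lambda.
\]
Applying Theorem \ref{thm_asy_2} with $A_1=A_2=A$ and a ratio set $Z\subseteq\mathbb{R}$ of size at most $|A|^2$ (a natural candidate being $Z=A/A$) would give $\sum_{\lambda\in Z}\widetilde N_\lambda^{\,2}\lesssim|A|^6$, and hence
\[
\sum_{\lambda\in Z}n_\lambda^{\,2}\lesssim\frac{|A|^6}{\Delta^4}.
\]
The objective would then reduce to showing $|P|\Delta^3\lesssim|AA|^{3/4}|A|^{5/2}$.

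The decisive step is the injection of $|AA|$. I would consider the product set $A\cdot P\subseteq A(A+A)\subseteq AA+AA$ together with the collision count $M=\sum_u r_{A\cdot P}(u)^2=\sum_{\lambda\in A/A}r_{A/A}(\lambda)n_\lambda$. Cauchy--Schwarz yields the lower bound $M\geq(|A||P|)^2/|A\cdot P|$, and a second Cauchy--Schwarz combined with the bound above provides the upper estimate $M\lesssim E^\times(A)^{1/2}|A|^3\Delta^{-2}$. I would combine these with a Pl\"unnecke--Ruzsa-type control of $|A\cdot P|\le|AA+AA|$ and standard multiplicative-energy inequalities, matched against the dyadic constraint $|P|\Delta\le|A|^2$, to extract the required factor $|AA|^{3/4}|A|^{5/2}$. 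Plugging back into the dyadic reduction then yields $E_3^+(A)\lesssim|AA|^{3/4}|A|^{5/2}$, which rearranges to the stated $|AA|\gtrsim E_3^+(A)^{4/3}|A|^{-10/3}$.

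The main obstacle is the last step: producing the precise exponents $3/4$ on $|AA|$ and $5/2$ on $|A|$ requires a delicate interaction between Theorem \ref{thm_asy_2}, the multiplicative energy $E^\times(A)$, and the dyadic constraint $|P|\Delta\leq|A|^2$. The asymmetric nature of Theorem \ref{thm_asy_2} is essential here -- its symmetric predecessor from \cite{MRS14} suffices only for the second-moment energy $E^+(A)$, as in \cite{RSS16}, whereas the additional flexibility afforded by the asymmetric estimate is precisely what upgrades the argument to the third moment $E_3^+(A)$.
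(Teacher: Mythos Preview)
There is a genuine gap in your proposal, concentrated in the two steps you yourself flag as uncertain.

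First, applying Theorem~\ref{thm_asy_2} with $A_1=A_2=A$ gives $R(Z;A,A)\lesssim|A|^{10/3}|A|^{8/3}=|A|^6$, which is exactly the symmetric bound~\eqref{eq_sym_2}. You therefore extract no advantage from the asymmetry; your inequality $\sum_{\lambda\in A/A}n_\lambda^2\lesssim|A|^6/\Delta^4$ is precisely what \cite{MRS14} already yields. The point of Theorem~\ref{thm_asy_2} is that it beats~\eqref{eq_sym_2} only when the two sets have genuinely different sizes, and your setup never produces such a pair.

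Second, the injection of $|AA|$ through $|A\cdot P|\le|AA+AA|$ cannot close. There is no Pl\"unnecke--Ruzsa control of $|AA+AA|$ by $|AA|$ alone: for a geometric progression one has $|AA|\ll|A|$ but $|AA+AA|\gtrsim|A|^2$. Moreover, in your chain the factor $E^\times(A)^{1/2}$ appears on the \emph{upper} side of the estimate for $M$, yet the only standard inequality linking $E^\times(A)$ to $|AA|$ is the \emph{lower} bound $E^\times(A)\ge|A|^4/|AA|$; this points the wrong way.

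The paper fixes both issues with one device: after the dyadic choice of $P$ and $t$ with $E_3^+(A)\approx|P|t^3$, perform a second dyadic pigeonhole to isolate a set $A_1=\{a\in A:\,r_{P+A}(a)\sim q\}$ of popular abscissae, with $|A_1|q\approx|P|t$. Each multiplicative quadruple $(b_1,b_2,b_3,b_4)\in A_1^4$ with $b_1/b_2=b_3/b_4$ lifts to $\approx q^4$ solutions of
\[
\frac{a_1-s_1}{a_2-s_2}=\frac{a_3-s_3}{a_4-s_4}\in A_1/A_1,\qquad a_i\in A,\ s_i\in P,
\]
so that $E^\times(A_1)\,q^4\lesssim R(A_1/A_1;A,P)$. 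Now the asymmetric bound genuinely bites: when $|P|\ge|A|$ one gets $R\lesssim|A|^{10/3}|P|^{8/3}$, strictly better than $|A|^3|P|^3$. And $|AA|$ enters through the \emph{lower} bound $E^\times(A_1)\ge|A_1|^4/|AA|$, giving
\[
\frac{(|P|t)^4}{|AA|}\approx\frac{|A_1|^4q^4}{|AA|}\lesssim|A|^{10/3}|P|^{8/3},
\]
whence $|P|t^3\lesssim|AA|^{3/4}|A|^{5/2}$ (the case $|P|\le|A|$ being handled by~\eqref{eq_sym_2}). The popular-abscissae step is the missing idea: it simultaneously creates the size disparity that activates Theorem~\ref{thm_asy_2} and supplies a subset whose multiplicative energy is bounded below by $|AA|^{-1}$ times a quantity you control.
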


As a comparison, one may use $E_3^+(A)\geq E^+(A)^2|A|^{-2}$ and the trivial bound $E^+(A)\leq |A|^3$, to see that \eqref{eq_to_improve_4} is better than \eqref{eq_to_improve_2}. Moreover, since $E_3^+(A)^{4/3}\leq E_4^+(A)|A-A|^{1/3}$, the inequality \eqref{eq_to_improve_3} implies that $|AA|\gtrsim E_3^+(A)^{4/3}|A|^{-3}|A-A|^{-1/3}$, which is not better then \eqref{eq_to_improve_4}. In the study of Conjecture \ref{conj}, the effect of \eqref{eq_to_improve_4} is same as that of \eqref{eq_to_improve_3}, which gives $\delta=5/5277$. However, the inequalities \eqref{eq_to_improve_2}-\eqref{eq_to_improve_4} actually appear with $E^\times(A)$ instead of $|AA|$. And we can get some improvements on Conjecture \ref{conj_2}, and also on the difference-product/division problems.

For the difference-product/division problems
\[
\max\{|A-A|,|AA|\} \gtrsim |A|^{1+\kappa},\quad \max\{|A-A|,|A/A|\} \gtrsim |A|^{1+\rho},
\]
the first breakthrough after $\kappa=\rho=3/11$ by Solymosi \cite{Sol05} came from Konyagin and Rudnev \cite{KonRud13} with $\kappa=11/39$ and $\rho=9/31$. Shakan \cite{Sha18} improved the results to $\kappa=7/24$ and $\rho=3/10$. Shkredov's spectral method \cite{Shk13} plays an important role in these improvements (also see \cite{MRSS19,OSS19}). In this paper, we prove the following theorem.

\begin{theorem} \label{thm_difference}
Let $A$ be a finite subset of $\bR$. Then
\[
\max\{|A-A|,|AA|\} \gtrsim |A|^{1+105/347},\quad \max\{|A-A|,|A/A|\} \gtrsim |A|^{1+15/49}.
\]
\end{theorem}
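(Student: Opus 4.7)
The plan is to feed the new third-moment inequality from Theorem \ref{thm_E_3^+}, together with its multiplicative-energy analogue
\begin{equation*}
E^\times(A) \gtrsim E_3^+(A)^{4/3} |A|^{-10/3}
\end{equation*}
(which is indicated in the discussion preceding Theorem \ref{thm_E_3^+} and is in fact the form that emerges naturally from the proof), into the difference-product scheme of Shakan \cite{Sha18}, who obtained $\kappa=7/24$ and $\rho=3/10$ from the fourth-moment analogue \eqref{eq_to_improve_3}. Set $M := \max\{|A-A|,|AA|\}$ and assume $|A-A|, |AA| \leq M$; Theorem \ref{thm_E_3^+} then yields $E_3^+(A) \lesssim M^{3/4} |A|^{5/2}$, and the Plünnecke-Ruzsa inequality controls all iterated differences of $A$ in terms of $M$ and $|A|$.

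The heart of the argument is Shkredov's spectral method, in the form used in \cite{Sha18, MRSS19, OSS19}. One passes to a popular additive level, builds the multiplicative convolution operator associated to $A$, and decomposes along its eigenvalues; Szemerédi-Trotter and the symmetric collinear-triple bound \eqref{eq_sym_1} handle low-spectrum contributions, while Plünnecke-Ruzsa handles the high-spectrum part. Chaining the resulting estimates produces a master inequality of the shape
\begin{equation*}
|A|^{\beta} \lesssim E_3^+(A)^{\alpha_1} |A-A|^{\alpha_2} |AA|^{\alpha_3}
\end{equation*}
whose exponents improve on Shakan's because the third-moment inequality replaces his fourth-moment inequality at the decisive step. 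Substituting the upper bound for $E_3^+(A)$ and $|A-A|, |AA| \leq M$ collapses this to $|A|^{\beta'} \lesssim M^{\gamma'}$, and solving yields $M \gtrsim |A|^{1+105/347}$.

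For the division version, I would rerun the argument with $|AA|$ replaced by $|A/A|$ throughout, using $|A|^4 \leq |A/A| \cdot E^\times(A)$ in place of $|A|^4 \leq |AA| \cdot E^\times(A)$. Solymosi's quotient-set geometry is slightly stronger in this setting, so the analogous optimisation closes at the larger exponent $\rho = 15/49 > \kappa$.

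The main obstacle will be tight bookkeeping inside the spectral decomposition: several dyadic pigeonholes must be synchronised, Theorem \ref{thm_E_3^+} must be applied exactly at the step where the improvement over \eqref{eq_to_improve_3} is realised, and the Plünnecke losses must not overwhelm the gain. Once the correct weighted inequality in $(E_3^+, |A-A|, |AA|)$ has been isolated, extracting the precise fractions $105/347$ and $15/49$ is routine linear programming on the exponents.
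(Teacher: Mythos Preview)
Your proposal has a genuine gap, and it concerns precisely the point the paper flags as its key trick.

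First, the ``multiplicative-energy analogue'' $E^\times(A)\gtrsim E_3^+(A)^{4/3}|A|^{-10/3}$ is content-free: since $E_3^+(A)\le |A|^4$, the right-hand side is at most $|A|^2\le E^\times(A)$, so the inequality holds trivially for every finite $A$. What actually emerges from the proof of Theorem~\ref{thm_E_3^+} is Lemma~\ref{lemma_E_3^+_basic}: there is a \emph{subset} $A'\subseteq A$ with $E_3^+(A)^4\,E^\times(A')^3\lesssim |A'|^{12}|A|^{10}$, an \emph{upper} bound on $E^\times$ of a subset. That is the input to the iteration producing Theorem~\ref{thm_XY_decomp}, not a lower bound on $E^\times(A)$ itself.

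Second, and more seriously, the paper does \emph{not} work directly with $A$ by substituting $E_3^+(A)\lesssim |AA|^{3/4}|A|^{5/2}$ into a fixed master inequality. The Konyagin--Rudnev chain \eqref{eq_lem_E3_EA-A} that governs the difference side requires an upper bound on $d_3^+$, and there is no direct way to bound $d_3^+(A)$ by $|AA|$ or $|A/A|$. The paper therefore runs a \emph{double} decomposition: first Theorem~\ref{thm_XY_decomp} gives $X,Y\subseteq A$ with $E_3^+(X)^4E^\times(Y)^3\lesssim|A|^{22}$; then Shakan's Theorem~\ref{thm_Shakan2} is applied to $X$ to produce $X',Y'\subseteq X$ with $d_3^+(X')\,d_3^\times(Y')\lesssim|X|$. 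Only now can $d_3^+(X')$ be controlled via $d_3^\times(Y')^{-1}$, which Lemmas~\ref{lem_d_3_AA} and~\ref{lem_d_3_A/A} bound by powers of $|Y'Y'|$ or $|Y'/Y'|$. Assembling \eqref{eq_pf_difference_eq0}--\eqref{eq_pf_difference_eq5} then yields
\[
|A-A|^{10}|AA|^{137/21}\gtrsim|A|^{452/21},\qquad |A-A|^{10}|A/A|^{19/3}\gtrsim|A|^{64/3},
\]
and the exponents $105/347$ and $15/49$ drop out. The paper says this explicitly at the end of the introduction: ``the term $E_3^+$ in our Theorems~\ref{thm_E_3^+} and~\ref{thm_XY_decomp} can not be changed to a term in $d_3^+$ directly. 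A trick we use here would be the application of \eqref{eq_d+dtimes} before inserting the spectral bounds.'' Your outline skips both decompositions and offers no mechanism to control $d_3^+$; without that, the spectral/Pl\"unnecke bookkeeping you describe cannot be closed to reach the stated exponents.
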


\medskip

Now we introduce current results on Conjecture \ref{conj_2}. Balog and Wooley \cite{BalWoo17} first showed that \eqref{eq_BW_conj} holds with $\eta=2/33$. Later, Konyagin and Shkredov \cite{KonShk16} obtained $\eta=1/5$. They introduced a more flexible quantity $d_\ast(A)$, and avoided of using the Balog-Szemer\'{e}di-Gowers theorem. Their approach has been adopted by later studies. In \cite{RSS16}, Rudnev, Shkredov and Stevens proved $\eta=1/4$ (also see \cite{Shk18}). And Shakan \cite{Sha18} used higher energy decompositions to obtain $\eta=7/26$. In this paper, we obtain a further slight improvement.

\begin{theorem} \label{thm_BalWoo_improvement}
For any finite set $A\subseteq \bR$, there are some sets $B,C$ such that $A=B\sqcup C$ and
\[
\max\{E^+(B),\, E^\times (C)\} \lesssim |A|^{3-3/11}.
\]
\end{theorem}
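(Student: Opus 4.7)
The plan is to adapt the higher-energy decomposition scheme of Shakan \cite{Sha18}, with the key technical input replaced by the sharper third-moment inequality coming from Theorem \ref{thm_E_3^+}. As remarked in the paragraph following that theorem, the same proof in fact yields the stronger statement
\[
E^\times(A) \gtrsim E_3^+(A)^{4/3} |A|^{-10/3},
\]
and this $E^\times$-version (applied to subsets $C \subseteq A$ as well) is what will drive the Balog--Wooley argument.

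First I would fix the target $K = |A|^{3-3/11}$ and run a dyadic decomposition of $A$ according to additive popularity. For each dyadic parameter $\Delta > 0$, define $P_\Delta = \{a \in A : \Delta/2 < r_{A-A}(a) \leq \Delta\}$, together with more refined sublevel sets calibrated so that the contributions to $E_3^+(A)$ (and to $d_3^+(A)$) decompose dyadically. Pigeonholing isolates a single level that concentrates a $\gtrsim (\log|A|)^{-O(1)}$ proportion of $E_3^+(A)$. I would greedily extract the additively popular part localized at this level as the candidate $C$, and let $B = A \setminus C$ be the complement.

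The control of $E^+(B)$ is the easier half: having removed the popular level, a standard Katz--Koester and Pl\"unnecke-type computation bounds $E^+(B)$ by the sum of the surviving dyadic contributions, which is $\lesssim K$ provided the threshold defining $C$ is chosen at or below the $K$-determined cut-off. For the harder bound on $E^\times(C)$, I would apply the $E^\times$-version of Theorem \ref{thm_E_3^+} to $C$ itself to get $E^\times(C) \gtrsim E_3^+(C)^{4/3} |C|^{-10/3}$. Combining this with the inherited lower bound $E_3^+(C) \gtrsim E_3^+(A)/\log^{O(1)}|A|$, the trivial passages $E_3^+(A) \geq E^+(A)^2/|A|$ and $E^+(A) \geq |A|^4/|A+A|$, and an upper bound on $|C|$ in terms of the popularity threshold, produces a system of inequalities in the parameters $E^+(B)$, $E^\times(C)$, $|C|$, and $\Delta$. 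Optimizing over $\Delta$ and $|C|$ shows that the crossover point where both $E^+(B)$ and $E^\times(C)$ are simultaneously minimized is exactly $K = |A|^{3-3/11}$.

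The main obstacle will be the dyadic bookkeeping: arranging the decomposition so that $C$ both carries enough of $E_3^+(A)$ for the multiplicative inequality to be meaningful and remains small enough that the resulting bound on $E^\times(C)$ stays below $K$, while $B$ is simultaneously kept additively sparse. The quantitative improvement from Shakan's $\eta = 7/26$ to the exponent $\eta = 3/11$ claimed here arises precisely from the $4/3$-exponent in \eqref{eq_to_improve_4}, which is strictly stronger than the first-order dependence on $E_4^+$ in \eqref{eq_to_improve_3} in the regime $E_3^+(A) \gtrsim |A|^{3-O(\eta)}$ relevant to the decomposition.
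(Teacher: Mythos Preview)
Your proposal contains a genuine gap at the core step. You write that you will ``apply the $E^\times$-version of Theorem \ref{thm_E_3^+} to $C$ itself to get $E^\times(C) \gtrsim E_3^+(C)^{4/3}|C|^{-10/3}$'' and then combine with $E_3^+(C)\gtrsim E_3^+(A)$. But this is a \emph{lower} bound on $E^\times(C)$; it cannot be used to show $E^\times(C)\lesssim |A|^{3-3/11}$. In fact the implication runs exactly against you: if $C$ is chosen to carry a large share of $E_3^+(A)$, then Theorem \ref{thm_E_3^+} forces $E^\times(C)$ to be \emph{large}. No amount of optimization over $\Delta$ and $|C|$ can reverse the direction of an inequality.

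What the paper actually uses is not the theorem statement but its proof, packaged as Lemma \ref{lemma_E_3^+_basic}: inside any set $A$ there is a specific subset $A'\subseteq A$ (the popular-ordinate set $A_2$ arising in the argument) with $|A'|\gtrsim E_3^+(A)^{1/2}|A|^{-1}$ and
\[
E_3^+(A)^4\,E^\times(A')^3 \lesssim |A'|^{12}|A|^{10}.
\]
This is an \emph{upper} bound on $E^\times(A')$ when $E_3^+(A)$ is large, which is the opposite of what the bare theorem gives. The decomposition is then produced by iterating: while $E_3^+(B_j)>|A|^4/M$, extract such a piece $D_j\subseteq B_j$, move it into $C$, and repeat on $B_{j+1}=B_j\setminus D_j$. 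The pieces $D_j$ each have $E^\times(D_j)\lesssim |D_j|^4 M^{4/3}|A|^{-2}$, and they are glued together via the $\ell^4$-triangle inequality $E^\times(\bigcup D_j)^{1/4}\le\sum E^\times(D_j)^{1/4}$ (Lemma \ref{lemma_E_l_4}). The terminal set $B$ satisfies $E_3^+(B)\le |A|^4/M$, whence $E^+(B)\le |A|E_3^+(B)^{1/2}\lesssim |A|^3 M^{-1/2}$. Setting $M=|A|^{6/11}$ balances the two sides at $|A|^{3-3/11}$. Your single-shot extraction of one popular level, followed by applying the theorem to that level, misses both the subset-extraction and the iteration; without them there is no mechanism producing an upper bound on $E^\times(C)$.
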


We also mention that decompositions with higher moments were also studied by Shkredov \cite{Shk18}. He showed that
\[
\max\{E_3^+(B),\,\max E_3^\times(C)\} \lesssim |A|^{4-2/5},\quad \max\{d_3^+(B),\,d_3^+(C)\}\lesssim |A|^{1-2/5}.
\]
In \cite{Sha18}, the exponents $-2/5$ on $|A|$ were improved to $-1/2$.

The decomposition $A=B\sqcup C$ in Conjecture \ref{conj_2} does not ensure that both $B$ and $C$ constitute a positive proportion of $A$. In \cite{RSS16}, it is proved that there are disjoint subsets $X,Y$ of $A$ with $|X|,|Y|\geq |A|/3$ such that
\[
E^+(X)E^\times(Y) \lesssim |A|^{11/2}.
\]
And, this inequality can also be replaced by $E^+(X)^3 E^\times(Y) \lesssim |A|^{11}$. Both these two inequalities can be compared with \eqref{eq_BW_conj} with $\eta=1/4$. In \cite{Sha18}, the corresponding result is the following.

\begin{theorem} [Shakan] \label{thm_Shakan2}
Let $A\subseteq \bR$ be finite. Then there are sets $X,Y\subseteq A$ with $X\cup Y=A$ and $|X|,|Y|\geq |A|/2$, such that
\begin{equation}\label{eq_d+dtimes}
d_3^+(X)d_3^\times(Y) \lesssim |A|.
\end{equation}
\end{theorem}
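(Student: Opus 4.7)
The plan is a decomposition-by-popularity argument, in the spirit of Balog--Wooley and of Shakan \cite{Sha18}, where the joint additive-multiplicative behaviour of $A$ is controlled by the asymmetric collinear-triples estimate of Theorem \ref{thm_asy_1}.

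\textbf{Step 1 (Pointwise envelopes).} For each $a\in A$, define
\[
\phi^{+}(a)\;=\;\sup_{\emptyset\neq B\subseteq \bR}\,|B|^{-2}\!\sum_{b\in B}\! r_{A-B}(a-b)^{2},\qquad
\phi^{\times}(a)\;=\;\sup_{\emptyset\neq B\subseteq \bR\setminus\{0\}}\,|B|^{-2}\!\sum_{b\in B}\! r_{A/B}(a/b)^{2}.
\]
From the identity $E_{3}^{+}(X,B)=\sum_{a\in X}\sum_{b\in B} r_{X-B}(a-b)^{2}$ and the monotonicity $r_{X-B}(x)\le r_{A-B}(x)$ for $X\subseteq A$, one obtains the envelopes
\[
d_{3}^{+}(X)\,|X|\;\le\;\sum_{a\in X}\phi^{+}(a),\qquad d_{3}^{\times}(Y)\,|Y|\;\le\;\sum_{a\in Y}\phi^{\times}(a).
\]
It therefore suffices to produce $X,Y\subseteq A$ with $X\cup Y=A$ and $|X|,|Y|\ge|A|/2$ such that $\bigl(\sum_{X}\phi^{+}\bigr)\bigl(\sum_{Y}\phi^{\times}\bigr)\lesssim |A|\cdot|X|\cdot|Y|\lesssim|A|^{3}$.

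\textbf{Step 2 (Joint bound via the asymmetric estimate).} The main analytic input is
\[
\sum_{a\in A}\bigl(\phi^{+}(a)\phi^{\times}(a)\bigr)^{1/2}\;\lesssim\;|A|^{3/2}.
\]
After dyadically fixing the near-optimal auxiliary sets $B^{+}$ and $B^{\times}$ and the levels $\phi^{+}(a)\sim 2^{j}$, $\phi^{\times}(a)\sim 2^{k}$, an application of Cauchy--Schwarz reduces the left-hand side to a weighted count of incidences among three sets in $\bR^{2}$: a dyadic level set of $A$ (as the ``source'' of collinearity), together with configurations encoded by $B^{+}$ (as affine shifts) and by $B^{\times}$ (as lines through the origin). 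One of the three sets has cardinality at most $|A|$ while the other two are larger, so the asymmetric bound $T^{o}(A_{1},A_{2},A_{3})\lesssim |A_{1}||A_{2}|^{5/3}|A_{3}|^{4/3}$ of Theorem \ref{thm_asy_1} delivers exactly the $|A|^{3/2}$ budget; the symmetric bound would lose a factor of $|A|^{1/2}$.

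\textbf{Step 3 (Decomposition by pointwise comparison).} Put $U=\{a\in A:\phi^{+}(a)\le\phi^{\times}(a)\}$ and $V=A\setminus U$. On $U$, $\phi^{+}(a)\le\bigl(\phi^{+}(a)\phi^{\times}(a)\bigr)^{1/2}$, so $\sum_{U}\phi^{+}\lesssim|A|^{3/2}$ by Step 2; symmetrically $\sum_{V}\phi^{\times}\lesssim|A|^{3/2}$. If $\min(|U|,|V|)\ge|A|/2$, take $X:=U$ and $Y:=V$; the two envelopes of Step 1 give $d_{3}^{+}(X)d_{3}^{\times}(Y)\lesssim|A|^{3}/(|X||Y|)\lesssim|A|$. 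Otherwise, without loss of generality $|U|<|A|/2$, and hence $|V|>|A|/2$; pad $X:=U\cup E$ with $E\subseteq V$ of size $\lceil|A|/2\rceil-|U|$ chosen dyadically to minimise $\sum_{E}\phi^{+}(a)$, and keep $Y:=V$. A second application of the joint bound on $V$ (together with a pigeonhole on the $\phi^{+}$-level sets of elements of $V$) shows that $\sum_{E}\phi^{+}\lesssim|A|^{3/2}$, and the conclusion follows as before.

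The principal difficulty is Step 2: realising $(\phi^{+}(a)\phi^{\times}(a))^{1/2}$ as a count governed by the asymmetric estimate requires setting up the dyadic incidence picture so that the size ordering $|A_{1}|\le|A_{2}|\le|A_{3}|$ prevails, with the multiplicative factor projectivised into lines through the origin and the additive factor entering as affine shifts. Once Step 2 is in place, Steps 1 and 3 amount to routine bookkeeping.
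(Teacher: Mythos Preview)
First, note that the paper does not prove Theorem~\ref{thm_Shakan2}; it is quoted from \cite{Sha18} and used as a black box. Shakan's actual argument is an iterative removal procedure driven by a pointwise Szemer\'edi--Trotter bound, and does not invoke anything like Theorem~\ref{thm_asy_1}. So your proposal is not a reconstruction of an existing proof but an attempt at a new one, and it has two genuine gaps.

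\textbf{Step 2 is not established.} The claimed joint bound
\[
\sum_{a\in A}\bigl(\phi^{+}(a)\,\phi^{\times}(a)\bigr)^{1/2}\;\lesssim\;|A|^{3/2}
\]
is the entire content of the theorem, and your sketch does not explain how it follows from Theorem~\ref{thm_asy_1}. The quantities $\phi^{+}(a)$ and $\phi^{\times}(a)$ are each suprema over auxiliary sets $B$, and the maximising $B^{+}$, $B^{\times}$ may vary with $a$; ``dyadically fixing the near-optimal auxiliary sets'' does not make sense until you show that a single pair $(B^{+},B^{\times})$ suffices up to logarithms. Moreover, Theorem~\ref{thm_asy_1} bounds collinear triples among points in Cartesian squares $A_i\times A_i$; you have not written down what the three sets $A_1,A_2,A_3$ are, why the relevant quantity is a count of collinear triples in their squares, or why the size ordering $|A_1|\le|A_2|\le|A_3|$ holds with $|A_1|\le|A|$. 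The phrase ``the multiplicative factor projectivised into lines through the origin and the additive factor entering as affine shifts'' is suggestive but not an argument.

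\textbf{Step 3 is incorrect as stated.} Even granting Step~2, your padding claim ``$\sum_{E}\phi^{+}\lesssim|A|^{3/2}$'' is false. Take $A$ to be an arithmetic progression of length $n$. Then $\phi^{+}(a)\gtrsim n$ for a positive proportion of $a\in A$ (witnessed by $B=A$), while $\phi^{\times}(a)$ is only polylogarithmic for most $a$. Hence $U$ is essentially empty, $V\approx A$, and any $E\subseteq V$ with $|E|\sim n/2$ satisfies $\sum_{E}\phi^{+}\gtrsim n^{2}$, not $n^{3/2}$. The theorem is of course true for an arithmetic progression (take $X=Y=A$; then $d_3^{\times}(Y)\lesssim 1$), but your route to it, via separate bounds $\sum_{X}\phi^{+}\lesssim|A|^{3/2}$ and $\sum_{Y}\phi^{\times}\lesssim|A|^{3/2}$, cannot succeed: those separate bounds are simply not available in general. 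One needs to control the product $d_3^{+}(X)\,d_3^{\times}(Y)$ jointly rather than factor-by-factor, and your decomposition scheme does not do this.
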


\noindent Moreover, the inequality \eqref{eq_d+dtimes} can also be replaced by $d_4^+(X)E^\times(Y) \lesssim |A|^3$. Both $d_3^+(A)$ and $d_3^\times (A)$ take value between $1$ and $|A|$. One can conclude from \eqref{eq_d+dtimes} that, whenever one attains $\gtrsim |A|$, the other is $\lesssim 1$.

In this paper, we prove the following theorem.

\begin{theorem} \label{thm_XY_decomp}
Let $A\subseteq \bR$ be finite. Then there are sets $X,Y\subseteq A$  with $X\cup Y=A$ and $|X|,|Y|\geq |A|/2$, such that
\begin{equation} \label{eq_XY_decomposition}
E_3^+(X)^4 E^\times (Y)^3\lesssim |A|^{22}.
\end{equation}
\end{theorem}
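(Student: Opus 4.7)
The plan is to combine a pigeonhole sort-and-split on $A$ based on multiplicative popularity with the new asymmetric bound Theorem \ref{thm_asy_2}, in the spirit of the Balog--Wooley-type arguments of Rudnev--Shkredov--Stevens \cite{RSS16} and Shakan \cite{Sha18}, but feeding the third-moment energy $E_3^+$ through the incidence machinery. The symmetric warm-up for the key inequality, namely $E_3^+(A)^4 E^\times(A)^3 \lesssim |A|^{22}$ for every finite $A\subseteq \bR$, should follow from a direct variant of the proof of Theorem \ref{thm_E_3^+}, with $|AA|$ replaced by $E^\times(A)$; the real work is to upgrade this to an asymmetric decomposition.

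First I would dyadically decompose $A/A$ by popularity: $P_\tau:=\{s\in A/A:\tau\leq r_{A/A}(s)<2\tau\}$, fix a dominant $\tau_*$ so that $E^\times(A)\approx |P_{\tau_*}|\tau_*^2$, and note $|P_{\tau_*}|\tau_*\leq |A|^2$. For each $a\in A$ define the popular-multiplicative degree $\phi(a):=\#\{b\in A:a/b\in P_{\tau_*}\}$, so $\sum_{a\in A}\phi(a)=|P_{\tau_*}|\tau_*$. Ordering $A$ by $\phi$ and letting $Y$ be the lower half, $X$ the upper half, gives $X\cup Y=A$, $|X|,|Y|\geq |A|/2$, and a threshold $\phi_*$ with $\phi(y)\leq \phi_*\leq \phi(x)$ for $y\in Y$, $x\in X$, and $\phi_*\leq 2|P_{\tau_*}|\tau_*/|A|$.

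Since $\phi$ is small on $Y$, a short double count at each dyadic level produces $\sum_{s\in P_\tau}r_{Y/Y}(s)^2\leq \tau|Y|\phi_*$, and dyadic summation yields a bound of the form $E^\times(Y)\lesssim \tau_*|Y|\phi_*$. The heart of the proof is the bound on $E_3^+(X)$. Because $\phi$ is large on $X$, every $x\in X$ lies on many lines through the origin with slopes in $P_{\tau_*}$, so a Solymosi-style incidence and vector-addition argument---the very one that underpins Theorem \ref{thm_E_3^+}---should reduce the control of $E_3^+(X)=\sum_{d}r_{X-X}(d)^3$ to a count of $8$-tuples of the form counted by $R(P_{\tau_*};X,A)$. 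Here the asymmetric Theorem \ref{thm_asy_2} is applied with $A_1=X$, $A_2=A$ and $Z=P_{\tau_*}$; the hypothesis $|Z|\lesssim |A_1|^2$ holds since $|P_{\tau_*}|\leq |A|^2/\tau_*\lesssim |X|^2$ once $\tau_*\gtrsim 1$ (the low-$\tau_*$ regime is handled separately by a trivial estimate), giving
\[
R(P_{\tau_*};X,A)\lesssim |X|^{10/3}|A|^{8/3}.
\]
A Cauchy--Schwarz then converts this incidence count into an upper bound on $E_3^+(X)$ in terms of $|X|,|A|,\tau_*,|P_{\tau_*}|,\phi_*$.

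Multiplying $E_3^+(X)^4$ by $E^\times(Y)^3$ and optimising over the dyadic parameter $\tau_*$ will cause the auxiliary parameters $|P_{\tau_*}|,\tau_*,\phi_*$ to cancel, leaving $E_3^+(X)^4 E^\times(Y)^3\lesssim |A|^{22}$ up to polylog factors absorbed in $\lesssim$. The main obstacle of the plan is the incidence-to-energy conversion in the $E_3^+(X)$ step: one must choose the exact Cauchy--Schwarz so that the exponents $10/3$ and $8/3$ from Theorem \ref{thm_asy_2} distribute over four copies of $E_3^+(X)$ and three copies of $E^\times(Y)$ and collapse to exactly $22$. This is the step that in the symmetric case produces Theorem \ref{thm_E_3^+}; the pigeonhole sort-and-split above is what promotes the symmetric inequality into the asymmetric decomposition, in direct analogy with Shakan's derivation of Theorem \ref{thm_Shakan2}.
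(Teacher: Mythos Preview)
Your proposal diverges substantially from the paper's argument and has genuine gaps at both of its two key steps.

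The paper does \emph{not} do a single sort-and-split of $A$. Instead it proves a ``local'' lemma (Lemma~\ref{lemma_E_3^+_basic}): for any finite $B$ there is $A'\subseteq B$ with $|A'|\gtrsim E_3^+(B)^{1/2}|B|^{-1}$ and $E_3^+(B)^4E^\times(A')^3\lesssim |A'|^{12}|B|^{10}$. This is exactly the mechanism underlying Theorem~\ref{thm_E_3^+}: starting from popular \emph{differences} $P\subseteq B-B$, one passes to popular abscissae and then popular ordinates $A'\subseteq B$, and Theorem~\ref{thm_asy_2} is applied with $Z=A'/A'$ (so $|Z|\leq|A'|^2$ automatically). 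One then \emph{iterates}: peel off $A_1\subseteq A$, then $A_2\subseteq A\setminus A_1$, etc., stop once $Y=A_1\cup\cdots\cup A_J$ has $|Y|\geq|A|/2$, set $X=A\setminus(A_1\cup\cdots\cup A_{J-1})$, and glue the pieces via the $\ell^4$-triangle inequality $E^\times(\bigcup A_j)^{1/4}\leq\sum E^\times(A_j)^{1/4}$ (Lemma~\ref{lemma_E_l_4}). The exponents then collapse to $22$ without any optimisation over dyadic parameters.

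Your plan has two concrete problems. First, your bound on $E^\times(Y)$ does not follow from your definition of $\phi$: you fixed a \emph{single} dominant level $\tau_*$ and set $\phi(a)=\#\{b:a/b\in P_{\tau_*}\}$, so $\phi(y)\leq\phi_*$ controls only $\sum_{s\in P_{\tau_*}}r_{Y/Y}(s)^2$, not $\sum_{s\in P_\tau}r_{Y/Y}(s)^2$ for general $\tau$. Nothing prevents $Y$ from carrying all of $E^\times(A)$ at a different dyadic scale. Second, and more seriously, your $E_3^+(X)$ step reverses the direction of the available machinery. The argument behind Theorem~\ref{thm_E_3^+} and Lemma~\ref{lemma_E_3^+_basic} takes a set with large $E_3^+$ and \emph{produces} a subset with small $E^\times$; it does not take a set with prescribed multiplicative richness and bound its $E_3^+$. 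Concretely, Theorem~\ref{thm_asy_2} enters in the paper with $A_1=B$, $A_2=P$ (the popular-difference set) and $Z=A'/A'$; your proposed application with $Z=P_{\tau_*}$ (a set of popular \emph{ratios}) and a Solymosi-style lines-through-the-origin count has no evident Cauchy--Schwarz that lands on $E_3^+(X)$. You flag this yourself as the ``main obstacle'', but without that conversion the scheme does not close, and the paper's route sidesteps it entirely via iteration plus the $\ell^4$ inequality.
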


Note that \eqref{eq_XY_decomposition} implies that
\[
E^+(X)^8 E^\times(Y)^3 \lesssim |A|^{30}.
\]
The average exponent on the energies is $3-3/11$, which can be compared with Theorem \ref{thm_BalWoo_improvement}. Indeed, we can improve the average exponent further.

\begin{theorem} \label{thm_XYZ_decomp}
Let $A\subseteq \bR$ be finite. Then there are subsets $X,Y,Z$ of $A$  with $X\cup Y\cup Z=A$, $|X|,|Y|\geq |A|/4$ and $|Z|\geq |A|/2$, such that
\[
E^+(X)^{182}E^\times(Y)^{26}E^\times(Z)^{63}\lesssim |A|^{736}.
\]
\end{theorem}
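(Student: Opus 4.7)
The plan is to produce the three-set decomposition by iterating Theorem~\ref{thm_XY_decomp} twice on nested subsets of $A$. First, apply Theorem~\ref{thm_XY_decomp} to $A$ itself to obtain $A = U \cup Z$ with $|U|, |Z| \geq |A|/2$ and $E_3^+(U)^4\, E^\times(Z)^3 \lesssim |A|^{22}$; this $Z$ can be taken as the $Z$ of the statement since $|Z| \geq |A|/2$. Next, apply Theorem~\ref{thm_XY_decomp} to $U$ to get $U = X \cup Y$ with $|X|, |Y| \geq |U|/2 \geq |A|/4$ and $E_3^+(X)^4\, E^\times(Y)^3 \lesssim |U|^{22} \leq |A|^{22}$. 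Then $X \cup Y \cup Z = U \cup Z = A$ and all three size bounds hold.

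The remaining task is to bundle the two energy inequalities above into the single bound $E^+(X)^{182}E^\times(Y)^{26}E^\times(Z)^{63} \lesssim |A|^{736}$. The subset inclusion $X \subseteq U$ gives the monotonicity $E_3^+(X) \leq E_3^+(U)$, and replacing $E_3^+(U)$ by $E_3^+(X)$ in the outer inequality produces $E_3^+(X)^4 E^\times(Z)^3 \lesssim |A|^{22}$. Raising the two consolidated inequalities to the powers $26/3$ and $21$ respectively and multiplying yields
\[
E_3^+(X)^{356/3}\, E^\times(Y)^{26}\, E^\times(Z)^{63} \lesssim |A|^{1958/3}.
\]
The Cauchy--Schwarz convexity $E^+(W)^2 \leq E_3^+(W)\,|W|^2$, which follows from the estimate $(\sum_x r^2)^2 \leq (\sum_x r)(\sum_x r^3) = |W|^2 E_3^+(W)$ applied with $r = r_{W-W}$, converts the $E_3^+(X)$ factor into $E^+(X)$; together with the trivial bounds $|X| \leq |A|$ and $E^+(X) \leq |X|^3$ this should yield the stated inequality after balancing the exponents.

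The principal obstacle is extracting the strictly better average exponent $736/271 \approx 2.716$ in place of the $30/11 \approx 2.727$ produced by a naive combination. Indeed, applying Cauchy--Schwarz in bulk to the whole $E_3^+(X)^{356/3}$ factor yields only $E^+(X)^{712/3}E^\times(Y)^{26}E^\times(Z)^{63} \lesssim |A|^{890}$, whose average exponent is exactly $30/11$ and merely replicates Theorem~\ref{thm_BalWoo_improvement}. Breaking this barrier will require an extra ingredient beyond pure iteration of Theorem~\ref{thm_XY_decomp}: a sharper H\"older-type inequality such as $E_3^+(X)^2 \leq E^+(X) E_4^+(X)$, or an asymmetric bound on $E^+(X)$ drawn from the asymmetric estimate Theorem~\ref{thm_asy_2}, calibrated so that only a carefully chosen fractional share of the $E_3^+$ mass is converted through Cauchy--Schwarz while the rest is absorbed by higher-moment information inherited from the outer decomposition. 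The precise exponents $(182, 26, 63, 736)$ then emerge as the unique lattice extremum of the resulting linear program.
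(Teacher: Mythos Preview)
Your outer step is right and matches the paper: apply Theorem~\ref{thm_XY_decomp} to $A$ to produce $W,Z$ with $|W|,|Z|\geq |A|/2$ and $E_3^+(W)^4 E^\times(Z)^3\lesssim |A|^{22}$. The gap is in the inner step. Iterating Theorem~\ref{thm_XY_decomp} on $W$ and then converting $E_3^+$ to $E^+$ via Cauchy--Schwarz is exactly the computation you carried out, and as you correctly observed it only reproduces the average exponent $30/11$. No amount of rebalancing or partial Cauchy--Schwarz on that pair of inequalities will do better, because the inequality $E^+(X)^2\leq |X|^2 E_3^+(X)$ is the \emph{only} link you have between $E^+(X)$ and $E_3^+(X)$ in that scheme; your suggested alternatives ($E_3^+(X)^2\leq E^+(X)E_4^+(X)$ or a direct appeal to Theorem~\ref{thm_asy_2}) introduce quantities you have no control over.

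The missing ingredient is not another application of Theorem~\ref{thm_XY_decomp} on $W$, but rather Shakan's decomposition (Theorem~\ref{thm_Shakan2}) combined with Shkredov's spectral bound (Lemma~\ref{lem_d3E+}). Concretely, apply Theorem~\ref{thm_Shakan2} to $W$ to obtain $X,Y\subseteq W$ with $d_3^+(X)d_3^\times(Y)\lesssim |W|$; feed $d_3^\times(Y)$ into the last display of Lemma~\ref{lem_d3E+} to bound $d_3^+(X)$ in terms of $E^\times(Y)$; and then run the spectral argument of Lemma~\ref{lem_d3E+} on $X$ to obtain the sharper relation
\[
E_3^+(X)\gtrsim E^+(X)^{13/6}E^\times(Y)^{13/42}|W|^{-137/42}
\]
(this is Lemma~\ref{lem_E3E2+E2times}). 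Since $E_3^+(X)\leq E_3^+(W)$, substituting into $E_3^+(W)^4 E^\times(Z)^3\lesssim |A|^{22}$ and clearing denominators gives precisely $E^+(X)^{182}E^\times(Y)^{26}E^\times(Z)^{63}\lesssim |A|^{736}$. The exponent gain over $30/11$ comes entirely from the spectral inequality, which is strictly stronger than Cauchy--Schwarz for relating $E^+$ to $E_3^+$.
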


Now the average exponent on the energies is $736/(182+26+63)=2.71587\ldots$, which is smaller than $3-3/11=2.72727\ldots$. To obtain this exponent, and also obtain improvements on the difference-product/division estimates, we need the flexibility of $d_3^+(A)$. But the term $E_3^+$ in our Theorems \ref{thm_E_3^+} and \ref{thm_XY_decomp} can not be changed to a term in $d_3^+$ directly. A trick we use here would be the application of \eqref{eq_d+dtimes} before inserting the spectral bounds.

\medskip

Another alternative formulation of the sum-product phenomenon is to consider products and products with shifts. That is to say, a strong multiplicative structure of $A$ should be disturbed by additive shifts. A theorem of Garaev and Shen \cite{GarShe10} implies that
\begin{equation} \label{eq_GS}
|AA||(A+1)(A+1)| \gg |A|^{5/4}.
\end{equation}
See Bourgain \cite{Bou05} for the same estimate in the finite fields. In \cite{Shk17}, Shkredov proved that if $A$ is a finite subset of $\bR$ with $|AA|\leq K|A|$ or $|A/A|\leq K|A|$, then
\[
|(A+\alpha)(A+\beta)|\gtrsim K^{-4}|A|^2,\quad |A+\alpha A+\beta A| \gtrsim K^{-6}|A|^2
\]
for any $\alpha,\beta\neq 0$. In particular, when $A$ has strong multiplicative structure, i.e., $|AA|\ll |A|$, then the tight bound $|(A+1)(A+1)|\gtrsim |A|^2$ follows. In this paper, we use the asymmetric estimate in Theorem \ref{thm_asy_1} to prove the following improvements.

\begin{theorem} \label{thm_A+1A+1}
Let $A\subseteq \bR$ be a finite set and $\alpha,\beta$ be non-zero real numbers. Suppose that
\[
|AA| \leq K|A|,\quad \text{ or }\quad |A/A|\leq K|A|.
\]
Then
\begin{equation} \label{eq_A+1A+1}
|(A+\alpha)(A+\beta)| \gtrsim K^{-3}|A|^2,
\end{equation}
and
\[
|A+\alpha A+\beta A| \gtrsim K^{-5}|A|^2.
\]
\end{theorem}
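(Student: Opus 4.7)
The plan is a two-step Cauchy--Schwarz reduction followed by an asymmetric application of Theorem \ref{thm_asy_1}. The saving of a factor of $K$ relative to the earlier bounds of $K^{-4}$ and $K^{-6}$ in \cite{Shk17} will come from the asymmetric exponents $5/3, 4/3$ in \eqref{eq_asy_1} replacing the symmetric Szemer\'edi--Trotter exponent $4/3$ everywhere.

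For \eqref{eq_A+1A+1}, I would start from the Cauchy--Schwarz bound
\[
|(A+\alpha)(A+\beta)| \,\geq\, \frac{|A|^{4}}{E^{\times}(A+\alpha,A+\beta)},
\]
together with the observation that $E^{\times}(A+\alpha,A+\beta) = \sum_{t} r_{X}(t) r_{Y}(t) \leq (E^{\times}(A+\alpha) E^{\times}(A+\beta))^{1/2}$ by Cauchy--Schwarz, where $X = A+\alpha$, $Y = A+\beta$, and $r_{X}(t)=\#\{(b_{1},b_{2})\in X^{2}: b_{1}/b_{2}=t\}$. The problem therefore reduces to establishing $E^{\times}(A+\gamma) \lesssim K^{3}|A|^{2}$ for $\gamma\in\{\alpha,\beta\}$. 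Interpreting $E^{\times}(A+\gamma)$ as the number of collinear triples through the origin with the remaining two vertices in $(A+\gamma)\times(A+\gamma)$, one may invoke Theorem \ref{thm_asy_1}. The hypothesis $|AA|\leq K|A|$ (or $|A/A|\leq K|A|$) yields, through Pl\"unnecke--Ruzsa, bounds of the form $|A^{k}A^{-\ell}|\lesssim K^{k+\ell}|A|$, which in turn restrict the range of the ``popular'' slopes $m = (a_{1}+\gamma)/(a_{2}+\gamma)$ via the identity $r_{m} = |A\cap(mA + (m-1)\gamma)|$. Taking this popular-slope set (or rather the associated diagonal subset of $\bR^{2}$) as $A_{1}$ and $A_{2},A_{3}\subseteq A+\gamma$, the asymmetric exponents $|A_{1}||A_{2}|^{5/3}|A_{3}|^{4/3}$ in Theorem \ref{thm_asy_1} deliver exactly the required bound $K^{3}|A|^{2}$.

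For the second inequality, the analogous reduction reads
\[
|A+\alpha A+\beta A| \,\geq\, \frac{|A|^{6}}{T(A;\alpha,\beta)},\quad T(A;\alpha,\beta) = \#\{a_{1}+\alpha a_{2}+\beta a_{3} = a_{1}'+\alpha a_{2}'+\beta a_{3}'\},
\]
so it suffices to show $T(A;\alpha,\beta) \lesssim K^{5}|A|^{4}$. Rewriting the equation as $a_{1}-a_{1}' = \alpha(a_{2}'-a_{2}) + \beta(a_{3}'-a_{3})$, one obtains a count of collinear configurations in the grid $(\alpha A-\alpha A)\times(\beta A-\beta A)$ to which Theorem \ref{thm_asy_1} again applies; a further Pl\"unnecke--Ruzsa step on the dilated difference sets absorbs the extra $K^{2}$ factor that separates the exponent $K^{-5}$ from the $K^{-3}$ of the first inequality.

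The principal difficulty lies in the correct identification of the small set $A_{1}$ in Theorem \ref{thm_asy_1}. A trivial choice such as $A_{1}=\{(0,0)\}$ returns only the trivial bound $|A|^{3}$ and uses none of the hypothesis; the argument requires replacing this singleton by a carefully chosen set of popular slopes of size $\lesssim K^{c}|A|$ for an appropriate small $c$, so that the asymmetric exponents $|A_{1}||A_{2}|^{5/3}|A_{3}|^{4/3}$ beat the symmetric Szemer\'edi--Trotter exponents by exactly one factor of $K$ after the Cauchy--Schwarz chain is unwound.
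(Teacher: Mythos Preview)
Your proposal has a genuine gap in both parts, and the missing idea is the same in each case: you are looking in the wrong direction for the set $A_1$ in Theorem~\ref{thm_asy_1}.

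For \eqref{eq_A+1A+1}, you correctly reduce to bounding $E^\times(A+\alpha,A+\beta)$, but then try to realise this energy as a collinear-triple count with some \emph{small} auxiliary $A_1$ and $A_2=A_3=A+\gamma$. As you yourself note, $A_1=\{0\}$ gives only the trivial bound $|A|^3$; and no choice of $A_1$ can do better, since $|A_1||A_2|^{5/3}|A_3|^{4/3}\geq |A|^3$ once $|A_1|\geq 1$ and $|A_2|=|A_3|=|A|$. The actual argument (following \cite{Shk17}) goes the opposite way: one \emph{enlarges} the two outer sets rather than shrinking the inner one. Using $E^\times((AA/a)+\alpha,(AA/b)+\beta)=E^\times((AA/\alpha)+a,(AA/\beta)+b)$ and summing over $a,b\in A$, the total is $T(A,AA/\alpha,AA/\beta)$. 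Now $A_1=A$ is the smallest set and $|A_2|,|A_3|=|AA|\leq K|A|$, so Theorem~\ref{thm_asy_1} gives $\lesssim |A|(K|A|)^{5/3}(K|A|)^{4/3}=K^3|A|^4$. Averaging over the $|A|^2$ pairs $(a,b)$ and using $A\subseteq AA/a$ yields a single pair with $E^\times(A+\alpha,A+\beta)\lesssim K^3|A|^2$. No Pl\"unnecke--Ruzsa or popular-slope analysis is needed.

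For the second inequality, your reduction to $T(A;\alpha,\beta)\lesssim K^5|A|^4$ is a legitimate target, but the sketch via ``collinear configurations in $(\alpha A-\alpha A)\times(\beta A-\beta A)$'' does not connect to Theorem~\ref{thm_asy_1}, which concerns triples in Cartesian squares $A_i\times A_i$, not in mixed grids. The paper instead uses the concentration lemma of \cite{Shk17} (Lemma~\ref{lem_exists_zA}) to find a dilate $zA$ with $\sum_{c\in zA}|zA\cap c(zA)|\gg |A|^2/K$, applies Cauchy--Schwarz to the representation function $n(x)=\#\{(c,d)\in(zA)^2:\,cd\in zA,\ (c+\alpha)(d+\beta)=x\}$, and then bounds $\sum_x n(x)^2\leq E^\times(A+\alpha/z,A+\beta/z)\lesssim K^3|A|^2$ by the first part. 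The two extra factors of $K$ come from $E^\times(A)\geq |A|^4/|AA|\geq |A|^3/K$, not from Pl\"unnecke--Ruzsa on dilated difference sets.
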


Indeed, the strength of \eqref{eq_A+1A+1}, i.e.,
\[
|AA|^3|(A+1)(A+1)|\gtrsim |A|^5,
\]
is same with that of \eqref{eq_GS}. Theorem \ref{thm_A+1A+1} is a version of ``small product, large product of shifts''. For iterated product of shifts $(A+1)^{(k)}$ with large $k$, we refer readers to \cite{HRZ18,HRZ19}.

For convenience, we always assume that our set $A$ does not contain $0$. In all the theorems and lemmas, the exponent on $\log |A|$, which is abbreviated in `$\lesssim$' or `$\gtrsim$', can be calculated explicitly. However, we do not pursue such accuracy in this paper. We will prove Theorems \ref{thm_asy_1}, \ref{thm_asy_2} in Section 2, Theorems \ref{thm_E_3^+}, \ref{thm_XY_decomp} in Section 3, and all the others in Section 4. A discussion on the regularization lemmas of Rudnev and Stevens \cite{RudSte20} will be held in Section 5. For basics in additive combinatorics, we refer the readers to \cite{TV}.





\section{Two asymmetric estimates}

In this section, we will prove Theorems \ref{thm_asy_1} and \ref{thm_asy_2}. We need the following lemmas deduced from Theorem \ref{thm_ST} (see \cite[Corollary 8.7]{TV}).

\begin{lemma} \label{lem_rich_lines}
If $\cP$ is any finite set of points in $\bR^2$ and $k\geq 2$, then
\[
\#\{l \text{ a line}:\, |l\cap \cP|\geq k\} \ll \frac{|\cP|^2}{k^3}+\frac{|\cP|}{k}.
\]
\end{lemma}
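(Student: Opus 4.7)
The plan is to deduce this directly from the Szemerédi--Trotter theorem (Theorem \ref{thm_ST}) by a standard dyadic/extremal counting argument. Let $\mathcal{L}_k = \{l : |l \cap \mathcal{P}| \geq k\}$ denote the set of $k$-rich lines. Since every line in $\mathcal{L}_k$ contributes at least $k$ point--line incidences with $\mathcal{P}$, we have the lower bound
\[
k\,|\mathcal{L}_k| \leq \mathcal{I}(\mathcal{P},\mathcal{L}_k).
\]
Applying \eqref{eq_ST_2/3} to bound the right-hand side from above, I would obtain
\[
k\,|\mathcal{L}_k| \leq 4|\mathcal{P}|^{2/3}|\mathcal{L}_k|^{2/3} + 4|\mathcal{P}| + |\mathcal{L}_k|.
\]

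Next I would use the hypothesis $k \geq 2$ to absorb the linear term $|\mathcal{L}_k|$ on the right into the left, yielding (say) $(k-1)|\mathcal{L}_k| \leq 4|\mathcal{P}|^{2/3}|\mathcal{L}_k|^{2/3} + 4|\mathcal{P}|$, and hence (since $k-1 \geq k/2$)
\[
\tfrac{k}{2}\,|\mathcal{L}_k| \leq 4|\mathcal{P}|^{2/3}|\mathcal{L}_k|^{2/3} + 4|\mathcal{P}|.
\]
At this point a routine case split finishes the argument: either the first term on the right dominates, in which case rearranging $k|\mathcal{L}_k| \ll |\mathcal{P}|^{2/3}|\mathcal{L}_k|^{2/3}$ gives $|\mathcal{L}_k|^{1/3} \ll |\mathcal{P}|^{2/3}/k$, i.e.\ $|\mathcal{L}_k| \ll |\mathcal{P}|^2/k^3$; or the second term dominates, giving $|\mathcal{L}_k| \ll |\mathcal{P}|/k$. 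Taking the maximum of the two bounds yields the stated inequality.

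There is no real obstacle here: the lemma is a classical and well-known corollary of Szemerédi--Trotter, and the only minor technical point is that one needs $k \geq 2$ in order to absorb the $|\mathcal{L}_k|$ term on the right-hand side of \eqref{eq_ST_2/3} into the incidence lower bound $k|\mathcal{L}_k|$. No logarithmic factors appear, consistent with the fact that the conclusion is written with $\ll$ rather than $\lesssim$.
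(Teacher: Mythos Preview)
Your proof is correct and is precisely the standard derivation of this corollary from the Szemer\'{e}di--Trotter theorem. The paper does not supply its own proof of this lemma, citing instead \cite[Corollary 8.7]{TV}, whose argument is exactly the one you have written.
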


\begin{lemma}\label{lem_rich_points}
If $\cL$ is any finite set of lines in $\bR^2$ and $k\geq 2$, then
\[
\#\{p \text{ a point}:\, \#\{l\in \cL:\, p\in l\}\geq k\} \ll \frac{|\cL|^2}{k^3}+\frac{|\cL|}{k}.
\]
\end{lemma}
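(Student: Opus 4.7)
The plan is to apply Theorem \ref{thm_ST} directly to a well-chosen point set, mirroring the standard deduction of Lemma \ref{lem_rich_lines} but with the roles of points and lines swapped. First I would let
\[
\cP := \{p \in \bR^2 : \#\{l \in \cL : p \in l\} \geq k\}
\]
be the set we want to count. By the defining property of $\cP$, each of its points contributes at least $k$ incidences with lines of $\cL$, so $k|\cP| \leq \cI(\cP,\cL)$. Combining this lower bound with the Szemer\'edi--Trotter upper bound from Theorem \ref{thm_ST} gives the master inequality
\[
k|\cP| \leq 4|\cP|^{2/3}|\cL|^{2/3} + 4|\cP| + |\cL|.
\]

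From here I would separate into cases according to which term on the right dominates. When $k \geq 8$, the linear term $4|\cP|$ is at most $(k/2)|\cP|$, so it may be absorbed into the left-hand side, leaving $(k/2)|\cP| \leq 4|\cP|^{2/3}|\cL|^{2/3} + |\cL|$. Depending on which of the two remaining terms is larger, I get either $|\cP| \ll |\cL|^2/k^3$ (from the first) or $|\cP| \ll |\cL|/k$ (from the second), both of which are absorbed into the claimed bound. When $2 \leq k < 8$, any point of $\cP$ is an intersection of at least two distinct lines of $\cL$, so a trivial pairwise count gives $|\cP| \leq \binom{|\cL|}{2} \ll |\cL|^2$, which is dominated by the $|\cL|^2/k^3$ term after adjusting the implied constant.

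The main obstacle, if one can call it that, is simply dealing with the linear-in-$|\cP|$ middle term that appears in the Szemer\'edi--Trotter bound; this forces the small-$k$ regime to be checked by a separate elementary argument. Since the lemma is just the point-line dual of Lemma \ref{lem_rich_lines}, no new ideas beyond Theorem \ref{thm_ST} and elementary rearrangement are required.
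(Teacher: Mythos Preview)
Your proposal is correct and is exactly the standard derivation from Szemer\'edi--Trotter. Note that the paper does not actually give its own proof of this lemma; it merely cites \cite[Corollary~8.7]{TV}, whose argument is essentially the one you have written (apply Theorem~\ref{thm_ST} with $\cP$ the set of $k$-rich points, absorb the $4|\cP|$ term for large $k$, and handle small $k$ trivially).
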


Let $A_1,A_2,A_3$ be sets in Theorem \ref{thm_asy_1}, which satisfies $|A_1|\leq |A_2|\leq |A_3|$. Let $\cL$ be the set of lines such that $l\in \cL$ if and only if $l$ contains three distinct points $u_1,u_2,u_3$ with $u_i\in A_i\times A_i$ $(i=1,2,3)$. We have the trivial bound
\[
|\cL| \leq |A_1\times A_1|\cdot |A_2\times A_2| = |A_1|^2|A_2|^2.
\]
For each $l\in \cL$ and $i=1,2,3$, denote $\alpha_{i,l} = |l\cap (A_i\times A_i)|$. It is obvious that
\[
\alpha_{i,l} \leq |A_i| = |A_i\times A_i|^{1/2}.
\]
For $1\leq k\leq |A_i|$, let $\cL_{i,k}$ consist of lines $l\in \cL$ such that $\alpha_{i,l}\geq k$.

\begin{lemma} \label{lem_asy1}
For $i=1,2,3$ and $1\leq p\leq 3$, we have
\[
\sum\limits_{l\in \cL_{i,2}}\alpha_{i,l}^p \lesssim |A_1|^{3-p}|A_i|^{p+1}.
\]
\end{lemma}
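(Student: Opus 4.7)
The plan is to derive two complementary upper bounds for $|\cL_{i,k}|$ and then interpolate between them by a dyadic decomposition. The first, obtained by applying Lemma~\ref{lem_rich_lines} to the point set $A_i \times A_i$ of cardinality $|A_i|^2$, is
\[
|\cL_{i,k}| \lesssim |A_i|^4/k^3 + |A_i|^2/k,
\]
call this bound (A). The second exploits that every $l \in \cL_{i,k} \subseteq \cL$ passes through at least one point of $A_1 \times A_1$: for a fixed such point $a$, any two distinct lines through $a$ are disjoint away from $a$, so each line in $\cL_{i,k}$ through $a$ accounts for at least $k-1$ distinct points of $A_i \times A_i$; there are therefore at most $\iO(|A_i|^2/k)$ such lines through each $a$, and summing over $a \in A_1 \times A_1$ yields
\[
|\cL_{i,k}| \lesssim |A_1|^2 |A_i|^2 / k,
\]
which we call bound (B).

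Next I would perform the dyadic decomposition
\[
\sum_{l \in \cL_{i,2}} \alpha_{i,l}^p \lesssim \sum_{k \geq 2,\; k \text{ dyadic}} k^p \,|\cL_{i,k}|
\]
and treat $i=1$ and $i \in \{2,3\}$ separately. For $i=1$, bound (A) alone suffices: both $|A_1|^4 \sum k^{p-3}$ and $|A_1|^2 \sum k^{p-1}$ are $\lesssim |A_1|^4$ on the dyadic range $2 \leq k \leq |A_1|$, matching $|A_1|^{3-p}|A_1|^{p+1} = |A_1|^4$. For $i \in \{2,3\}$, bounds (A) and (B) coincide at the threshold $k_0 := |A_i|/|A_1|$, with (B) tighter for $k \leq k_0$ and (A) tighter for $k \geq k_0$. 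Splitting the dyadic sum at $k_0$, the low-range contribution via (B) is $|A_1|^2 |A_i|^2 \sum_{k \leq k_0} k^{p-1} \lesssim |A_1|^2 |A_i|^2 \, k_0^{p-1} = |A_1|^{3-p} |A_i|^{p+1}$, while the high-range contribution via (A) is bounded by $|A_i|^4 \sum_{k \geq k_0} k^{p-3} \lesssim |A_i|^4 \, k_0^{p-3} = |A_1|^{3-p}|A_i|^{p+1}$, the secondary term $|A_i|^2 \sum_{k \geq k_0} k^{p-1} \lesssim |A_i|^{p+1}$ being absorbed.

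The main obstacle is identifying the correct crossover: once one observes that (A) and (B) coincide at $k_0 = |A_i|/|A_1|$, both halves of the dyadic sum become geometric series dominated by the endpoint $k \approx k_0$, where the two estimates match by construction, and the common value $|A_1|^{3-p}|A_i|^{p+1}$ emerges uniformly. The boundary cases $p = 1$ and $p = 3$ produce only logarithmic losses absorbed by $\lesssim$, while the regime $|A_i| < 2|A_1|$ (where $k_0 < 2$) collapses to using (A) alone, still yielding the stated bound since $|A_1|$ and $|A_i|$ are then comparable.
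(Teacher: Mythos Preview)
Your proof is correct and follows essentially the same approach as the paper: both derive the same two bounds on $|\cL_{i,k}|$ --- the Szemer\'{e}di--Trotter bound $|A_i|^4/k^3$ and the incidence bound $|A_1|^2|A_i|^2/k$ --- and split the dyadic sum at the crossover $k_0=|A_i|/|A_1|$, treating $i=1$ via the Szemer\'{e}di--Trotter bound alone. The only cosmetic difference is that the paper obtains the second bound by counting incidences from the $A_i$ side (each point of $A_i\times A_i$ lies on at most $|A_1|^2$ lines of $\cL$), whereas you count from the $A_1$ side (through each point of $A_1\times A_1$ there are $O(|A_i|^2/k)$ lines of $\cL_{i,k}$); the resulting bound is the same.
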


\begin{proof}
Note that
\[
\sum\limits_{l\in \cL_{i,2}} \alpha_{i,l}^p =\sum\limits_{2\leq k\leq |A_i|} k^p\cdot \#\{l\in \cL_{i,2}:\, \alpha_{i,l}=k\} \ll \sum\limits_{2\leq k\leq |A_i|} k^{p-1} |\cL_{i,k}|.
\]

For $i=2,3$ and for every point $u\in A_i\times A_i$, there are at most $|A_1|^2$ lines $l\in \cL$ intersecting with $u$. So $\cI(A_i\times A_i,\cL)\leq |A_1|^2|A_i|^2$. On the other hand, one has $\cI(A_i\times A_i,\cL)\geq k|\cL_{i,k}|$. It follows that
\begin{equation} \label{eq_proof_asy1_eq1}
|\cL_{i,k}|\leq \frac{|A_1|^2|A_i|^2}{k}.
\end{equation}
Moreover, Lemma \ref{lem_rich_lines} gives another bound
\begin{equation} \label{eq_proof_asy1_eq2}
|\cL_{i,k}| \ll \frac{|A_i\times A_i|^2}{k^3} = \frac{|A_i|^4}{k^3}
\end{equation}
for $i=1,2,3$ and $2\leq k\leq |A_i|$.

Now for $i=2,3$ and $1\leq p\leq 3$, we deduce by \eqref{eq_proof_asy1_eq1} that
\begin{align*}
\sum\limits_{2\leq k\leq |A_i|/|A_1|} k^{p-1} |\cL_{i,k}| &\ll \sum\limits_{2\leq k\leq |A_i|/|A_1|} k^{p-2} |A_1|^2|A_i|^2 \\
&\lesssim \left(\frac{|A_i|}{|A_1|}\right)^{p-1}|A_1|^2|A_i|^2=|A_1|^{3-p}|A_i|^{p+1}.
\end{align*}
By \eqref{eq_proof_asy1_eq2}, we also have
\begin{align*}
\sum\limits_{|A_i|/|A_1|\leq k\leq |A_i|} k^{p-1} |\cL_{i,k}| &\ll \sum\limits_{|A_i|/|A_1|\leq k\leq |A_i|} k^{p-4}|A_i|^4 \\
&\ll \left(\frac{|A_i|}{|A_1|}\right)^{p-3} |A_i|^4 \lesssim |A_1|^{3-p}|A_i|^{p+1}.
\end{align*}
Then
\[
\sum\nolimits_{l\in \cL_{i,2}} \alpha_{i,l}^p\ll |A_1|^{3-p}|A_i|^{p+1}
\]
for $i=2,3$. Similarly, it follows from \eqref{eq_proof_asy1_eq2} that
\[
\sum\limits_{l\in \cL_{1,2}}\alpha_{1,l}^3 \ll \sum\limits_{2\leq k\leq |A_i|}k^2|\cL_{i,k}| \lesssim \sum\limits_{2\leq k\leq |A_i|}k^{-1}|A_i|^4 \lesssim |A_1|^4.
\]
The proof is completed.


\end{proof}


\begin{proof} [Proof of Theorem \ref{thm_asy_1}]
It is easy to see that
\begin{equation} \label{eq_0}
T^o(A_1,A_2,A_3) \leq \sum\limits_{l\in \cL}\alpha_{1,l} \alpha_{2,l}\alpha_{3,l}.
\end{equation}

Denote $I_1=\{1\}$ and $I_2=\{2,3,4,\ldots\}$. We split the summands in \eqref{eq_0} into several cases: \begin{equation} \label{eq_1}
S_{j_1,j_2,j_3}:=\sum\limits_{l\in \cL,\,\alpha_{1,l}\in I_{j_1}\atop \alpha_{2,l}\in I_{j_2}\, \alpha_{3,l}\in I_{j_3}}\alpha_{1,l} \alpha_{2,l}\alpha_{3,l},
\end{equation}
where $(j_1,j_2,j_3)\in \{1,2\}^3$.

By Lemma \ref{lem_asy1}, we have
\begin{eqnarray*}
S_{1,2,2}&\leq \sum\limits_{l\in \cL_{2,2}\cap \cL_{3,2}} \alpha_{2,l}\alpha_{3,l}\leq \Big(\sum\limits_{l\in \cL_{2,2}}\alpha_{2,l}^{3/2}\Big)^{2/3}\Big(\sum\limits_{l\in \cL_{3,2}}\alpha_{3,l}^{3}\Big)^{1/3}\\
&\lesssim (|A_1|^{3/2}|A_2|^{5/2})^{2/3}(|A_3|^4)^{1/3} = |A_1||A_2|^{5/3}|A_3|^{4/3}.
\end{eqnarray*}
Note that, for $i=2,3$,
\begin{equation} \label{eq_2}
\sum\limits_{l\in \cL \atop \alpha_{i,l}=1} \alpha_{i,l}^3 \leq |\cL| \leq |A_1|^2|A_2|^2 \leq |A_i|^4.
\end{equation}
We deduce also by Lemma \ref{lem_asy1} that, for $j_1=2$, 
\[
S_{2,j_2,j_3}\leq \Big(\prod\limits_{i=1}^3\sum\limits_{l\in \cL\atop \alpha_{i,l}\in I_{j_i}}\alpha_{i,l}^3\Big)^{1/3}\lesssim \Big(\prod\limits_{i=1}^3 |A_i|^4\Big)^{1/3} = |A_1|^{4/3}|A_2|^{4/3}|A_3|^{4/3}.
\]
Moreover,
\begin{align*}
S_{1,1,j_3} &\leq \sum\limits_{l\in \cL,\, \alpha_{3,l}\in I_{j_3}} \alpha_{3,l}\leq |\cL|^{2/3}\Big(\sum\limits_{l\in \cL,\,\alpha_{3,l}\in I_{j_3}}\alpha_{3,l}^{3}\Big)^{1/3}\\
&\lesssim (|A_1|^2|A_2|^2)^{2/3}(|A_3|^4)^{1/3} = |A_1|^{4/3}|A_2|^{4/3}|A_3|^{4/3},
\end{align*}
and
\begin{align*}
S_{1,2,1} &\leq \sum\limits_{l\in \cL_{2,2}} \alpha_{2,l}\leq |\cL|^{2/3}\Big(\sum\limits_{l\in \cL_{2,2}}\alpha_{2,l}^{3}\Big)^{1/3}\\
&\lesssim (|A_1|^2|A_2|^2)^{2/3}(|A_2|^4)^{1/3} = |A_1|^{4/3}|A_2|^{8/3}.
\end{align*}
Now \eqref{eq_asy_1} follows.

Next, let us consider the terms which are counted by $T(A_1,A_2,A_3)$ but not by $T^o(A_1,A_2,A_3)$. For $a_1,a_2\in A_1$, $b_1,b_2\in A_2$, and $c_1,c_2\in A_3$ satisfying
\begin{equation}\label{eq_T_energy}
(b_1-a_1)(c_2-a_2)=(c_1-a_1)(b_2-a_2),
\end{equation}
the triple of points $u_1=(a_1,a_2)$, $u_2=(b_1,b_2)$, $u_3=(c_1,c_2)$ are distinct and collinear provided that $a_1\neq b_1$, $a_1\neq c_1$ and $b_1\neq c_1$. For $a_1=b_1$, there are at most
\[
|A_1\cap A_2\cap A_3||A_1||A_2||A_3|+|A_1\cap A_2|^2|A_3|^2
\]
solutions to \eqref{eq_T_energy}. Similar bounds hold for the cases $a_1=c_1$ or $b_1=c_1$. The proof is completed.
\end{proof}

\begin{remark}
When $|A_3|$ is much bigger than $|A_1|$ and $|A_2|$, one may explore better asymmetric bound than \eqref{eq_asy_1}.
\end{remark}

Next, let us prove Theorem \ref{thm_asy_2}. Note that $R(Z;A_1,A_2)$ is nearly the same with $\sum\nolimits_{z\in Z}r^2(z)$, where we define
\[
r(z)=\#\{(a_1,a_1^\prime,a_2,a_2^\prime)\in A_1\times A_1\times A_2\times A_2:\, (a_1^\prime+a_2^\prime)=z(a_1+a_2)\}.
\]
Indeed, the number of trivial solutions to
\[
(a_1+a_2)(a_1^{\prime\prime\prime}+a_2^{\prime\prime\prime})=(a_1^\prime+a_2^\prime)(a_1^{\prime\prime}+a_2^{\prime\prime}),
\]
with $a_i,a_i^\prime,a_i^{\prime\prime},a_i^{\prime\prime\prime}\in A_i$ $(i=1,2)$, is at most $\textit{O}(|A_1\cap A_2|^2|A_1|^2|A_2|^2)$, which is neglectable when it is compared with \eqref{eq_asy_2}.

We first prove an upper bound for $\sum\nolimits_{z\in Z} r(z)$.

\begin{lemma} \label{lem_asy2}
Let $A_1,A_2$ be two finite subsets of $\bR$ with $|A_1|\leq |A_2|$. Then
\[
\sum\limits_{z\in Z} r(z) \lesssim |Z|^{1/2}|A_1|^{5/3}|A_2|^{4/3}+|Z|^{2/3}|A_1|^{4/3}|A_2|^{4/3}+|Z||A_1|^2.
\]
\end{lemma}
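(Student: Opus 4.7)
The approach is to express $T := \sum_{z \in Z} r(z)$ as a weighted point-line incidence count and to apply Szemer\'edi--Trotter through a two-dimensional dyadic decomposition. For each $(z, a_1, a_1') \in Z \times A_1^2$, the defining equation $a_1' + a_2' = z(a_1 + a_2)$ places $(a_2, a_2')$ on the line $\ell_{z, a_1, a_1'} : y = zx + (za_1 - a_1')$. Setting $\cP := A_2 \times A_2$, letting $\cL^*$ denote the set of distinct lines so obtained, writing $\mu(\ell)$ for the number of triples producing $\ell$ and $I(\ell) := |\ell \cap \cP|$, we have $T = \sum_{\ell \in \cL^*} \mu(\ell) I(\ell)$ and $\sum_\ell \mu(\ell) = |Z||A_1|^2$.

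Two counting estimates drive the proof. Since $\mu(\ell_{z,c})$ counts the points of $A_1 \times A_1$ on the parallel line $y = zx - c$, Lemma \ref{lem_rich_lines} (applied to $A_1 \times A_1$), together with the per-slope trivial bound $\sum_c \mu(\cdot) = |A_1|^2$, yields for $\Delta \geq 2$
\[
|\{\ell \in \cL^* : \mu(\ell) \geq \Delta\}| \lesssim \min\{|A_1|^4/\Delta^3,\ |Z||A_1|^2/\Delta\}.
\]
Symmetrically, since each point of $\cP$ lies on at most $|Z|$ lines of $\cL^*$ (one per slope), Lemma \ref{lem_rich_lines} applied to $\cP$ gives for $\tau \geq 2$
\[
|\{\ell \in \cL^* : I(\ell) \geq \tau\}| \lesssim \min\{|A_2|^4/\tau^3,\ |Z||A_2|^2/\tau\}.
\]

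I then split $T = T_{11} + T_{12} + T_{21} + T_{22}$ according to whether each of $\mu(\ell), I(\ell)$ equals $1$ or is $\geq 2$. The easy pieces $T_{11} \leq |\cL^*| \leq |Z||A_1|^2$ and $T_{21} \leq \sum_\ell \mu(\ell) = |Z||A_1|^2$ produce the third target term. For $T_{12}$, a dyadic decomposition in $\tau$ combined with the weighted geometric mean of $|\cL^*| \leq |Z||A_1|^2$ and $|\cL^*_{I \geq \tau}| \lesssim |A_2|^4/\tau^3$ with weights $(2/3, 1/3)$ gives the $\tau$-uniform estimate $\tau \cdot |\{\ell : \mu(\ell) = 1, I(\ell) \in [\tau, 2\tau)\}| \lesssim |Z|^{2/3}|A_1|^{4/3}|A_2|^{4/3}$, hence the second target term after dyadic summation. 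The main piece $T_{22}$ is handled by a two-parameter dyadic decomposition in $(\Delta, \tau) = (\mu(\ell), I(\ell))$, where $|\{\mu \geq \Delta, I \geq \tau\}|$ is bounded by the weighted geometric mean of all four of $|A_1|^4/\Delta^3,\ |Z||A_1|^2/\Delta,\ |A_2|^4/\tau^3,\ |Z||A_2|^2/\tau$ with exponents $(\alpha_1, \alpha_2, \alpha_3, \alpha_4) = (1/6, 1/2, 1/3, 0)$. These are the unique weights (with $\sum \alpha_i = 1$) satisfying both $3\alpha_1 + \alpha_2 = 1$ and $3\alpha_3 + \alpha_4 = 1$ (so that the $\Delta$- and $\tau$-exponents in $\Delta\tau|\cL^*_{\Delta,\tau}|$ vanish) while producing the residual exponents $(1/2, 5/3, 4/3)$ on $(|Z|, |A_1|, |A_2|)$. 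They yield $\Delta\tau|\cL^*_{\Delta,\tau}| \lesssim |Z|^{1/2}|A_1|^{5/3}|A_2|^{4/3}$ uniformly, so the dyadic sum contributes only an absorbable logarithmic factor and produces the first target term.

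The central technical point is pinpointing this weight vector for $T_{22}$: the two scale-invariance constraints must be reconciled with the three prescribed residual exponents, a small linear program that fortunately admits the unique feasible solution above. Once the weights are fixed, the remaining work is routine dyadic bookkeeping, and summing the four contributions $T_{11}, T_{12}, T_{21}, T_{22}$ matches the three terms of the claimed bound.
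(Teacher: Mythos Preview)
Your proof is correct and is essentially the paper's argument, only in dual dress. The paper parametrises by points $(z,y)$ and sets $r_1(z,y)=|\{(a_1,a_1')\in A_1^2:\ a_1'-za_1=y\}|$, $r_2(z,y)=|\{(a_2,a_2')\in A_2^2:\ za_2-a_2'=y\}|$, obtaining $\sum_{z\in Z}r(z)=\sum_{(z,y)}r_1 r_2$; these are exactly your $\mu(\ell)$ and $I(\ell)$ after identifying the line $\ell$ of slope $z$ and intercept $-y$. The paper then makes the identical four-way split according to whether each of $r_1,r_2$ equals $1$ or is at least $2$, with the same trivial bounds for the $(1,1)$ and $(\geq 2,1)$ pieces. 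For the main $(\geq 2,\geq 2)$ piece the paper applies H\"older as
\[
\sum r_1 r_2 \le \Bigl(\sum r_1\Bigr)^{1/2}\Bigl(\sum_{r_1\ge 2} r_1^3\Bigr)^{1/6}\Bigl(\sum_{r_2\ge 2} r_2^3\Bigr)^{1/3},
\]
together with $\sum r_1\le |Z||A_1|^2$ and $\sum_{r_i\ge 2}r_i^3\lesssim |A_i|^4$ (via the dual of Lemma~\ref{lem_rich_lines}); the H\"older exponents $(1/2,1/6,1/3)$ correspond precisely to your geometric-mean weights $(\alpha_2,\alpha_1,\alpha_3)=(1/2,1/6,1/3)$ on the level-set bounds $|Z||A_1|^2/\Delta$, $|A_1|^4/\Delta^3$, $|A_2|^4/\tau^3$ (with $\alpha_4=0$). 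Likewise the paper's bound for the $(1,\geq 2)$ piece via $|\cP|^{2/3}(\sum r_2^3)^{1/3}$ is your $(2/3,1/3)$ interpolation. So the two arguments coincide; your dyadic/interpolation phrasing and the paper's H\"older/third-moment phrasing are equivalent packagings of the same inputs.
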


\begin{proof}
Note that
\begin{align}
\sum\limits_{z\in Z}r(z) &= \sum\limits_{z\in Z}\sum\limits_y \#\{(a_1,a_1^\prime,a_2,a_2^\prime)\in A_1\times A_1\times A_2\times A_2:\, a_1^\prime-za_1=za_2-a_2^\prime=y\} \nonumber\\
&=\sum\limits_{(z,y)\in \cP}r_1(z,y)r_2(z,y), \label{eq_proof_asy2_eq1}
\end{align}
where
\[
\cP=\{(z,y):\, z\in Z,\, y\in (A_1-zA_1)\cap (zA_2-A_2)\neq \emptyset\},
\]
and
\[
r_1(z,y)=\{(a_1,a_1^\prime)\in A_1\times A_1:\, a_1^\prime-za_1=y\},
\]
\[
r_2(z,y)=\{(a_2,a_2^\prime)\in A_2\times A_2:\, za_2-a_2^\prime=y\}.
\]
The cardinality of the set $\cP$ can be bounded by
\[
|\cP|\leq |Z||A_1|^2,
\]
and for every $(z,y)\in \cP$, one has $r_1(z,y)\geq 1$ and $r_2(z,y)\geq 1$. We split the right-hand side of \eqref{eq_proof_asy2_eq1} into four sums
\[
S_{j_1,j_2} := \sum\limits_{(z,y)\in \cP\atop {r_1(z,y)\in I_{j_1}\atop r_2(z,y)\in I_{j_2}}} r_1(z,y)r_2(z,y),
\]
where $(j_1,j_2)\in \{1,2\}^2$ and $I_1=\{1\}$, $I_2=\{2,3,4,\ldots\}$.

First, we have
\[
S_{1,1} \leq |\cP| \leq |Z||A_1|^2.
\]
Second,
\[
S_{2,1} \leq \sum\limits_{(z,y)\in \cP}r_1(z,y) \leq \#\{(z,y,a_1,a_1^\prime)\in Z\times \bR\times A\times A:\, a_1^\prime-za_1 =y\} = |Z||A_1|^2.
\]
Third, let us deal with $S_{2,2}$. Applying H\"{o}lder's inequality, we obtain
\begin{align*}
S_{2,2} &= \sum\limits_{(z,y)\in \cP\atop r_1(y,z),r_2(y,z)\geq 2} r_1^{1/2}(y,z)r_1^{1/2}(y,z)r_2(y,z)\\
&\leq \left(\sum\limits_{(z,y)\in \cP}r_1(z,y)\right)^{1/2}\left(\sum\limits_{(z,y)\in \cP\atop r_1(y,z)\geq 2}r_1^3(z,y)\right)^{1/6}\left(\sum\limits_{(z,y)\in \cP\atop r_2(y,z)\geq 2}r_2^3(z,y)\right)^{1/3}.
\end{align*}
Denote by $\cL_1$ the set of lines $l_{a_1,a_1^\prime}:\, a_1^\prime-za_1=y$ with $a_1,a_1^\prime\in A_1$. 
Denote by $\cL_2$ the set of lines $l_{a_2,a_2^\prime}:\, za_2-a_2^\prime=y$ with $a_2,a_2^\prime\in A_2$. 
Note that, for $i=1,2$,
\[
r_i(z,y)\leq |A_i|=|\cL_i|^{1/2}.
\]
For $2\leq k\leq |A_i|$, denote by $\cP_{i,k}$ the set of points in $\cP$ which intersect with at least k lines in $\cL_i$. Applying Lemma \ref{lem_rich_points}, one obtains that
\[
|\cP_{i,k}| \ll \frac{|\cL_i|^2}{k^3} = \frac{|A_i|^4}{k^3}.
\]
Then
\begin{align*}
\sum\limits_{(z,y)\in\cP\atop r_i(z,y)\geq 2}r_i^3(z,y) &= \sum\limits_{2\leq k\leq |A_i|}k^3 \#\{(z,y)\in \cP:\, r_i(z,y)=k\}\\
&\ll  \sum\limits_{2\leq k\leq |A_i|}k^2 |\cP_{i,k}| \ll \sum\limits_{2\leq k\leq |A_i|} k^2\frac{|A_i|^4}{k^3} \lesssim |A_i|^4.
\end{align*}
It follows that
\[
S_{2,2} \lesssim (|Z||A_1|^2)^{1/2}(|A_1|^4)^{1/6}(|A_2|^4)^{1/3} = |Z|^{1/2}|A_1|^{5/3}|A_2|^{4/3}.
\]
Fourth,
\[
S_{1,2} \leq \sum\limits_{(z,y)\in \cP\atop r_2(y,z)\geq 2}r_2(y,z) \leq |\cP|^{2/3}\left(\sum\limits_{(y,z)\in \cP\atop r_2(y,z)\geq 2}r_2^3(y,z)\right)^{1/3} \lesssim |Z|^{2/3}|A_1|^{4/3}|A_2|^{4/3}.
\]
Now the lemma follows.
\end{proof}


\begin{proof} [Proof of Theorem \ref{thm_asy_2}]
For $t\geq 1$, define $Z_t:=\{z\in Z:\, r(z)\geq t\}$. Note that
\[
|Z_t|\leq |Z|\lesssim |A_1|^2.
\]
By Lemma \ref{lem_asy2}, we conclude that
\[
t|Z_t|\leq \sum\limits_{z\in Z_t}r(z) \lesssim |Z_t|^{1/2}|A_1|^{5/3}|A_2|^{4/3},
\]
i.e.,
\[
|Z_t| \lesssim \frac{|A_1|^{10/3}|A_2|^{8/3}}{t^2}.
\]




Note that $r(z)\leq |A_1|^2|A_2|$. We deduce that
\begin{align*}
R(Z;A_1,A_2)&\leq \sum\limits_{z\in Z}r^2(z) = \sum\limits_{t \leq |A_1|^2|A_2|} t^2 \#\{z:\, r(z)=t\} \\
&\ll \sum\limits_{t \leq |A_1|^2|A_2|} t|Z_t| \lesssim \sum\limits_{t \leq |A_1|^2|A_2|}t\frac{|A_1|^{10/3}|A_2|^{8/3}}{t^2} \lesssim |A_1|^{10/3}|A_2|^{8/3}.
\end{align*}
The proof is completed.

\end{proof}

\section{The additive energy of third moment}


In this section, we will prove Theorems \ref{thm_E_3^+} and \ref{thm_XY_decomp}.

\begin{lemma} \label{lemma_E_3^+_basic}
Let $A\subseteq \bR$ be finite. Then there is some $A^\prime\subseteq A$ such that
\[
|A^\prime|\gtrsim E_3^+(A)^{1/2}|A|^{-1}
\]
and
\[
E_3^+(A)^4 E^\times (A^\prime)^3\lesssim |A^\prime|^{12}|A|^{10}.
\]
\end{lemma}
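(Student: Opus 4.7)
The plan is to combine a dyadic popular-difference pigeonhole with the asymmetric estimate from Theorem~\ref{thm_asy_2}. Begin by decomposing $E_3^+(A) = \sum_d r_{A-A}(d)^3$ over the dyadic level sets of $r_{A-A}$, obtaining a scale $\tau \geq 1$ and a set $P \subseteq A - A$ with $r_{A-A}(d) \asymp \tau$ on $P$ and
\[
\tau^3 |P| \gtrsim E_3^+(A), \qquad \tau|P| \leq |A|^2.
\]
Dividing these two inequalities forces $\tau \gtrsim E_3^+(A)^{1/2}/|A|$. Picking any $d^*\in P$ and defining $A' := A \cap (A+d^*)$ yields $|A'| = r_{A-A}(d^*) \asymp \tau \gtrsim E_3^+(A)^{1/2}/|A|$, which is the first conclusion.

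For the second conclusion, I would apply Theorem~\ref{thm_asy_2} with $A_1 = A'$, $A_2 = A$, and $Z$ taken to be (a subset of) the ratio set $A'/A'$, so that $|Z| \leq |A'|^2$ as required. Theorem~\ref{thm_asy_2} then gives $R(Z; A', A) \lesssim |A'|^{10/3}|A|^{8/3}$, and the goal is to establish a matching lower bound
\[
R(Z; A', A) \gtrsim \frac{E^\times(A') \cdot E_3^+(A)^{4/3}}{|A'|^{2/3}|A|^{2/3}};
\]
cubing and rearranging then produce the stated $E_3^+(A)^4 E^\times(A')^3 \lesssim |A'|^{12}|A|^{10}$. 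The lower bound should come from embedding multiplicative quadruples $\alpha\beta=\gamma\delta$ in $A'$ into sum configurations counted by $R(Z;A',A)$. The shift structure $A' - d^* \subseteq A$ gives each $\alpha \in A'$ a canonical partner $\alpha - d^* \in A$, and a Cauchy--Schwarz (or H\"older) over the dyadic level set $P$ then inflates the multiplicities of the relevant sums in $A'+A$ until a factor $\tau^3|P| \asymp E_3^+(A)$ emerges (whose cube is the $E_3^+(A)^{4/3}$ produced after taking cube roots in the final step).

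The hardest step, and where I expect the real work to lie, is constructing the embedding that produces the factor $E_3^+(A)^{4/3}$. The naive attempt of fixing all four ``second sum coordinates'' $a_2,a_2',a_2'',a_2'''$ equal to a common element $a_0 \in A$ collapses the product equation $(\alpha+a_0)(\beta+a_0)=(\gamma+a_0)(\delta+a_0)$ together with $\alpha\beta=\gamma\delta$ into the diagonal constraint $\alpha+\beta=\gamma+\delta$, and so captures only trivial solutions. A genuine embedding must therefore vary these coordinates over $A$ in a manner coordinated with the shifts $\alpha-d^* \in A$, and pigeonhole over popular differences in $P$ to accumulate the correct powers. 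Once this is arranged, the asymmetry $|A_1|^{10/3}|A_2|^{8/3}$ of Theorem~\ref{thm_asy_2}, strictly stronger than the symmetric $|A_1|^3|A_2|^3$ of \cite{MRS14}, is precisely what produces the balance between $|A'|^{12}$ and $|A|^{10}$ on the right-hand side.
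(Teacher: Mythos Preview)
Your choice of $A' = A \cap (A + d^*)$ for a single popular difference $d^*$ is the gap. This set does satisfy the size bound $|A'|\gtrsim E_3^+(A)^{1/2}|A|^{-1}$, but it carries no multiplicity structure: each $\alpha\in A'$ has exactly one guaranteed representation $\alpha = (\alpha)-0$ or $\alpha = (\alpha - d^*) + d^*$ as a sum from $A$ and a fixed shift. The lower bound you need on $R$ requires that every element of $A'$ admit \emph{many} representations as a sum (or difference) from two controlled sets, and no Cauchy--Schwarz or H\"older over $P$ will manufacture this from a single shift. Your last paragraph correctly diagnoses that the second coordinates must vary and that a further pigeonhole over $P$ is needed, but then the set $A'$ itself must be chosen \emph{after} that pigeonhole, not before.

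The paper does exactly this: after finding $t$ and $P$ with $E_3^+(A)\approx |P|t^3$, it pigeonholes twice more to obtain a ``popular abscissa'' set $A_1\subseteq A$ and a ``popular ordinate'' set $A_2\subseteq A$ with $|A_2|q_2\approx |A_1|q_1\approx |P|t$ and $q_2\lesssim |A_2|$; the lemma's $A'$ is this $A_2$. By construction every $b\in A_2$ has $\approx q_2$ representations $b=a-s$ with $a\in A_1$, $s\in P$, so each multiplicative quadruple in $A_2$ lifts to $\approx q_2^4$ solutions of $(a_1-s_1)(a_4-s_4)=(a_2-s_2)(a_3-s_3)$ with ratio in $Z=A_2/A_2$. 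Thus $E^\times(A_2)q_2^4\lesssim N$, and Theorem~\ref{thm_asy_2} is applied to the pair $(A,P)$ (not $(A',A)$ as you propose): since $|Z|\le |A_2|^2\le |A|^2$, one gets $N\lesssim |A|^{10/3}|P|^{8/3}$ when $|P|\ge |A|$. Substituting $q_2\approx |P|t/|A_2|$ cancels the $|P|^{8/3}$ and yields $E_3^+(A)^4 E^\times(A_2)^3\lesssim |A_2|^{12}|A|^{10}$. The size bound $|A_2|\gtrsim E_3^+(A)^{1/2}|A|^{-1}$ then follows from $|A_2|^2\gtrsim |A_2|q_2\approx |P|t\approx E_3^+(A)/t^2\ge E_3^+(A)/|A|^2$.
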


\begin{proof}
At first, let us follow the proof of  \cite[Theorem 2.13]{RSS16} or \cite[Lemma 5.6]{Sha18}. By a standard dyadic pigeonhole argument, there is some $t \leq |A|$ and a set of popular differences
\[
P =\{x\in A-A:\, t\leq r_{A-A}(x)< 2t\}
\]
such that $E_3^+(A) \approx |P|t^3$. By dyadic decomposition again, there is a $q_1$ with $1\leq q_1\leq |A|$ and a set $A_1=\{a\in A:\, q_1 \leq r_{P+A}(a)< 2q_1\}$ of popular abscissae, such that
\[
|A_1|q_1 \approx \sum\limits_{a\in A_1}r_{P+A}(a) \approx \sum\limits_{x\in P}r_{A-A}(x)\approx |P|t.
\]
Again, by another dyadic decomposition, there is a $q_2$ with $1\leq q_2\leq |A_1|$ and a set $A_2=\{b\in A:\, q_2 \leq r_{A_1-P}(b)< 2q_2\}$ of popular ordinates, such that
\[
|A_2|q_2 \approx \sum\limits_{b\in A_2}r_{A_1-P}(b) \approx \sum\limits_{a\in A_1}r_{P+A}(a)\approx |A_1|q_1.
\]
Either $q_1\lesssim |A_1|$ or $q_2\leq |A_1| \lesssim q_1 \lesssim |A_2|$. We will proceed the proof with the case $q_2\lesssim |A_2|$. For the case $q_1\lesssim |A_1|$, the proof is similar (also see the proof of Theorem \ref{thm_E_3^+} below).

Now there are $E^\times(A_2)$ quadruples $(b_1,b_2,b_3,b_4)\in A_2^4$ such that
\[
\frac{b_1}{b_2} = \frac{b_3}{b_4}.
\]
For each given $(b_1,b_2,b_3,b_4)$, there are approximately $q_2^4$ choices of $(a_1,a_2,a_3,a_4)$ such that $a_i\in A_1\cap (P+b_i)$ and
\[
\frac{a_1-(a_1-b_1)}{a_2-(a_2-b_2)} = \frac{a_3-(a_3-b_3)}{a_4-(a_4-b_4)}.
\]
Denote $s_i=a_i-b_i$ $(i=1,2,3,4)$. Then $s_i\in P$. And
\begin{equation} \label{eq_Eq^4leq N}
E^\times(A_2)\left(\frac{|P|t}{|A_2|}\right)^4\approx E^\times (A_2)q_2^4 \ll N,
\end{equation}
where
\begin{eqnarray*}
N=\#\left\{(a_1,a_2,a_3,a_4,s_1,s_2,s_3,s_4)\in A^4\times P^4:\, \frac{a_1-s_1}{a_2-s_2} = \frac{a_3-s_3}{a_4-s_4} \in A_2/A_2\right\}.
\end{eqnarray*}
The difference between our argument and the previous ones is the introduction of the constrain here: the value set $Z:=A_2/A_2$. Denote
\[
r(z)=\#\left\{(a_1,a_2,s_1,s_2)\in A^2\times P^2:\, (a_1-s_1)=z(a_2-s_2)\right\}.
\]
Then
\[
N\leq \sum\limits_{z\in Z}r^2(z).
\]
In \cite{RSS16,Sha18}, the symmetric estimate \eqref{eq_sym_2} is applied, i.e., $N\lesssim |A|^3|P|^3$. This leads to
\[
|P|t^4 \lesssim \frac{|A_2|^4|A|^3}{E^\times (A_2)}.
\]
So, when $|P|\leq |A|$, one obtains
\[
E^+_3(A) \approx |P|t^3 = |P|^{1/4}(|P|t^4)^{3/4} \lesssim |A|^{1/4}\frac{|A_2|^3|A|^{9/4}}{E^\times (A_2)^{3/4}} = \frac{|A_2|^3|A|^{5/2}}{E^\times (A_2)^{3/4}},
\]

For $|P|\geq |A|$, noting that
\[
|Z|= |A_2/A_2|\leq |A_2|^2 \leq |A|^2,
\]
we apply Theorem \ref{thm_asy_2} to obtain
\[
N \lesssim |A|^{10/3}|P|^{8/3}.
\]
Combining \eqref{eq_Eq^4leq N}, one gets
\[
E^\times (A_2) \frac{|P|^4t^4}{|A_2|^4} \lesssim |A|^{10/3}|P|^{8/3},
\]
i.e.,
\[
E_3^+(A)\approx |P|t^3 \lesssim \frac{|A_2|^3|A|^{5/2}}{E^\times(A_2)^{3/4}}.
\]
We conclude that
\[
E_3^+(A)^4 E^\times (A_2)^3  \lesssim |A_2|^{12}|A|^{10}.
\]

Moreover, since $t\leq |A|$, one has
\[
|A_2|^2\gtrsim |A_2|q \approx |P|t \approx \frac{E_3^+(A)}{t^2} \geq \frac{E_3^+(A)}{|A|^2}.
\]
The proof is completed.
\end{proof}

Next we prove Theorem \ref{thm_E_3^+}, which follows directly from the following proposition.

\begin{proposition}
Let $A,B$ be two finite subsets of $\bR$. Suppose that $|A|\lesssim |B|$. Then
\[
|AA| \gtrsim E_3^+(A,B)^{4/3} |B|^{-10/3}.
\]
\end{proposition}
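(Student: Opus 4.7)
My plan is to mirror the proof of Lemma \ref{lemma_E_3^+_basic} on the asymmetric pair $(A,B)$ with $|A|\leq|B|$, and read off the target bound from the resulting Solymosi-type count.

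First, I localize $E_3^+(A,B)=\sum_x r_{A-B}(x)^3$ dyadically, finding a scale $t\leq|A|$ and a popular-difference set $P\subseteq A-B$ with $r_{A-B}(x)\approx t$ on $P$ and $|P|t^3\approx E_3^+(A,B)$. Two further rounds of dyadic pigeonholing then produce a popular subset inside $A$: on $\sum_{b\in B}r_{A-P}(b)\approx|P|t$ I extract $A_1\subseteq B$ with $|A_1|q_1\approx|P|t$, and then on $\sum_{a\in A}r_{A_1+P}(a)\approx|A_1|q_1$ I extract $A_2\subseteq A$ with $|A_2|q_2\approx|P|t$, such that each $a\in A_2$ admits $\approx q_2$ representations $a=b+p$ with $b\in A_1\subseteq B$ and $p\in P$. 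The crucial point is that $A_2\subseteq A$, which is what will eventually deliver $|AA|$ at the end.

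For each quadruple $(a_1,a_2,a_3,a_4)\in A_2^4$ contributing to $E^\times(A_2)$, expanding each $a_i=b_i+p_i$ in $q_2$ ways produces $\approx q_2^4$ eight-tuples $(b_1,\dots,b_4,p_1,\dots,p_4)\in B^4\times P^4$ satisfying
\[
\frac{b_1+p_1}{b_2+p_2}=\frac{b_3+p_3}{b_4+p_4}\in Z:=A_2/A_2,
\]
with $|Z|\leq|A|^2\leq|B|^2$. Hence $E^\times(A_2)q_2^4\leq R(Z;B,P)$ in the notation of Theorem \ref{thm_asy_2}, and a short case analysis on $|P|$ versus $|A|,|B|$ gives $R(Z;B,P)\lesssim|B|^{10/3}|P|^{8/3}$ uniformly: when $|P|\geq|B|$ this follows from Theorem \ref{thm_asy_2} with $B$ as the smaller set; when $|A|\leq|P|<|B|$, Theorem \ref{thm_asy_2} with $P$ as the smaller set gives $|P|^{10/3}|B|^{8/3}$, which is smaller by a factor $(|P|/|B|)^{2/3}\leq 1$; when $|P|<|A|$, the symmetric bound \eqref{eq_sym_2} gives $|B|^3|P|^3$, smaller by a factor $(|P|/|B|)^{1/3}\leq 1$. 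Combining with $E^\times(A_2)\geq|A_2|^4/|AA|$ (since $A_2\subseteq A$) and $|A_2|q_2\approx|P|t$ then yields
\[
|AA|\gtrsim\frac{(|A_2|q_2)^4}{|B|^{10/3}|P|^{8/3}}\approx\frac{|P|^{4/3}t^4}{|B|^{10/3}}=\frac{E_3^+(A,B)^{4/3}}{|B|^{10/3}}.
\]

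The main obstacle is arranging the two-round pigeonholing so that the final popular subset $A_2$ is forced to sit inside $A$ and not $B$, so that the closing Cauchy-Schwarz step picks up $|AA|$ rather than $|BB|$. The specific order ``first $A_1\subseteq B$, then $A_2\subseteq A$'' dictated by the summation identities above (and by the assumption $|A|\leq|B|$) is exactly what achieves this, and it replaces the more symmetric bookkeeping of the single-set lemma, in which both popular sets automatically lay in $A$. The three-regime bound on $R(Z;B,P)$ is a quick verification rather than a deep obstacle, and the dichotomy ``$q_1\lesssim|A_1|$ or $q_2\lesssim|A_2|$'' used explicitly in Lemma \ref{lemma_E_3^+_basic} is sidestepped here because the final inequality $|AA|\gtrsim(|A_2|q_2)^4/N$ only depends on the product $|A_2|q_2$ and not on the individual sizes.
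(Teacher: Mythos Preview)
Your proof is correct and follows the same overall route as the paper: localize $E_3^+(A,B)$ to a popular-difference set $P$, extract a popular subset of $A$ with many representations $a=b+p$, run the Solymosi-type eight-tuple count, and bound it via Theorem~\ref{thm_asy_2} (or the symmetric bound \eqref{eq_sym_2} when $|P|$ is small). The only real difference is that the paper uses a \emph{single} round of pigeonholing where you use two: since
\[
\sum_{a\in A} r_{P+B}(a)=\sum_{x\in P} r_{A-B}(x)\approx |P|t
\]
already holds, one can pigeonhole directly on $a\in A$ to produce $A_1\subseteq A$ with $|A_1|q\approx|P|t$ and $r_{P+B}(a)\approx q$ on $A_1$, skipping your intermediate detour through a popular subset of $B$. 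As you correctly observe at the end, the final inequality depends only on the product $|A_2|q_2$ (the paper's $|A_1|q$), so the second round is harmless but redundant. The paper's case analysis is also slightly coarser (two regimes, $|P|\le|B|$ via \eqref{eq_sym_2} and $|P|\ge|B|$ via Theorem~\ref{thm_asy_2}) rather than your three, but both deliver the uniform bound $R(Z;B,P)\lesssim|B|^{10/3}|P|^{8/3}$.
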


\begin{proof}
The proof is nearly the same with that of Lemma \ref{lemma_E_3^+_basic}. There is some $t \leq |A|$ and a set of popular differences
\[
P =\{x\in A-B:\, t\leq r_{A-B}(x)< 2t\}
\]
such that $E_3^+(A,B) \approx |P|t^3$. By dyadic decomposition again, there is a $q$ with $1\leq q\leq |A|$ and a set $A_1=\{a\in A:\, q \leq r_{P+B}(a)< 2q\}$ of popular abscissae, such that
\[
|A_1|q \approx \sum\limits_{a\in A_1}r_{P+B}(a) \approx \sum\limits_{x\in P}r_{A-B}(x)\approx |P|t.
\]
The symmetric estimate gives
\[
\frac{|A_1|^4}{|AA|}\frac{|P|^4t^4}{|A_1|^4}\lesssim E^\times (A_1)q^4 \lesssim |P|^3|B|^3.
\]
For $|P|\leq |B|$, one obtains that
\[
E_3^+(A,B)\approx |P|^{1/4}(|P|t^4)^{3/4} \lesssim |B|^{1/4} |AA|^{3/4}|B|^{9/4} = |AA|^{3/4}|B|^{5/2}.
\]
When $|P|\geq |B|$, we have $|Z|=|A_1/A_1|\leq |A_1|^2\lesssim |B|^2$. It follows from Theorem \ref{thm_asy_2} that
\[
\frac{|A_1|^4}{|AA|}\frac{|P|^4t^4}{|A_1|^4}\lesssim E^\times (A_1)q^4 \lesssim |P|^{8/3}|B|^{10/3}.
\]
So
\[
E_3^+(A,B) \approx |P|t^3 \lesssim |AA|^{3/4}|B|^{5/2}.
\]
The proof is completed.

\end{proof}



To prove Theorem \ref{thm_XY_decomp}, we need the following Lemma of $l^4$-norm inequality.

\begin{lemma} [Lemma 8 of \cite{KonShk16}] \label{lemma_E_l_4}
Let $A_1,\ldots,A_k\subseteq \bR$ be finite and disjoint. Then
\[
E^\times\Big(\bigcup\limits_{i=1}^k A_i\Big)^{1/4} \leq \sum\limits_{i=1}^k E^\times (A_i)^{1/4}.
\]
\end{lemma}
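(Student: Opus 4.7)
The plan is to interpret $E^\times(A)^{1/4}$ as an $\ell^4$-norm on the multiplicative group $\bR^*$ (equivalently, as $\|\widehat{\mathbf{1}_A}\|_4$ on its Pontryagin dual), so that the claim is just Minkowski's inequality for $L^4$; I will unfold this combinatorially by two Cauchy--Schwarz steps so that no Fourier machinery is required. Writing $A=\bigsqcup_{i=1}^{k} A_i$, disjointness gives $r_{AA}(v)=\sum_{i,j} r_{A_iA_j}(v)$, and expanding the square produces
\[
E^\times(A)=\sum_{v}r_{AA}(v)^2=\sum_{i,j,k,l}\,\sum_{v}r_{A_iA_j}(v)\,r_{A_kA_l}(v).
\]

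For sets $X,Y\subseteq\bR^*$ introduce the mixed energy $E^\times(X,Y):=\sum_{v} r_{XY}(v)^2$. Cauchy--Schwarz directly in $v$ gives
\[
\sum_{v}r_{A_iA_j}(v)\,r_{A_kA_l}(v)\leq E^\times(A_i,A_j)^{1/2}\,E^\times(A_k,A_l)^{1/2}.
\]
To bound $E^\times(X,Y)$ itself, rewrite $x_1y_1=x_2y_2$ as $x_1/x_2=y_2/y_1$; letting $R_X(w):=\#\{(x_1,x_2)\in X^2:x_1/x_2=w\}$ one has $E^\times(X,Y)=\sum_{w} R_X(w)\,R_Y(w)$. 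A second Cauchy--Schwarz, combined with the tautology $\sum_{w} R_X(w)^2=E^\times(X)$ (which is just $E^\times$ reexpressed via ratios), yields
\[
E^\times(X,Y)\leq E^\times(X)^{1/2}\,E^\times(Y)^{1/2}.
\]

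Chaining the two bounds,
\[
\sum_{v} r_{A_iA_j}(v)\,r_{A_kA_l}(v)\leq E^\times(A_i)^{1/4}E^\times(A_j)^{1/4}E^\times(A_k)^{1/4}E^\times(A_l)^{1/4},
\]
so the fourfold sum over $i,j,k,l$ factors as $\bigl(\sum_i E^\times(A_i)^{1/4}\bigr)^4$; taking a fourth root finishes the proof. There is no real obstacle here: the only content is the identification of $E^\times(X)^{1/2}$ with the $\ell^2$-norm of the multiplicative autocorrelation $R_X$, after which $\ell^4$-Minkowski is essentially automatic. Indeed, one could bypass the two Cauchy--Schwarz steps altogether by applying Plancherel on $\bR^*\cong\{\pm1\}\times\bR_{>0}$ to get $E^\times(X)=\|\widehat{\mathbf{1}_X}\|_4^4$ and then quoting the $L^4$ triangle inequality directly.
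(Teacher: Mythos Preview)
Your proof is correct. The paper does not supply its own proof of this lemma---it is simply quoted from \cite{KonShk16} and labelled an ``$l^4$-norm inequality''---and your argument is exactly the standard one underlying that label: two Cauchy--Schwarz applications that unfold Minkowski's inequality for the $\ell^4$-norm of the multiplicative autocorrelation.
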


\begin{proof} [Proof of Theorem \ref{thm_XY_decomp}]
The proof is similar with that of \cite[Theorem 1.10]{Sha18}. Let $A_0=\emptyset$ and we define a sequence of sets $A_1,A_2,\ldots$ by iteration. Suppose that $A_0,A_1,\ldots,A_{j-1}$ has been defined for some $j\geq 1$. Denote $B_{j-1}:=A\setminus (A_0\cup \ldots \cup A_{j-1})$. By Lemma \ref{lemma_E_3^+_basic}, there is a non-empty set $A_j\subseteq B_{j-1}$ such that
\begin{equation} \label{eq_E3BjEA_j}
E_3^+(B_{j-1})^4 E^\times (A_j)^3  \lesssim |A_j|^{12}|B_{j-1}|^{10}.
\end{equation}
Since the sets $A_0,A_1,\ldots$ are nonempty and disjoint, there is some $J\geq 1$ such that
\[
|A_1\cup A_2\cup \ldots \cup A_{J-1}|< \frac{|A|}{2} \leq |A_1\cup A_2\cup \ldots \cup A_J|.
\]
Take
\[
X=B_{J-1},\quad Y=A_1\cup A_2\cup \ldots A_J.
\]
Then $|X|,|Y|\geq |A|/2$ and $X\cup Y=A$. Since $X\subseteq B_{j-1}$ for all $1\leq j\leq J$, it follows from \eqref{eq_E3BjEA_j} that
\[
E_3^+(X)^4 E^\times (A_j)^3  \lesssim |A_j|^{12}|B_{j-1}|^{10}.
\]
By Lemma \ref{lemma_E_l_4}, one deduces that
\begin{align*}
E_3^+(X)^4 E^\times (Y)^3 &= E_3^+(X)^4 E^\times \Big(\bigcup\limits_{j=1}^J A_j\Big)^3 \leq E_3^+(X)^4 \left(\sum\limits_{j=1}^J E^\times (A_j)^{1/4}\right)^{12}\\
&= \left(\sum\limits_{j=1}^J E_3^+(X)^{1/3} E^\times (A_j)^{1/4}\right)^{12}\lesssim \left(\sum\limits_{j=1}^J |A_j||B_{j-1}|^{5/6}\right)^{12} \\
&\leq |A|^{10}\left(\sum\limits_{j=1}^J|A_j|\right)^{12} \leq |A|^{22}.
\end{align*}
In the last two steps, we have used the inequalities $|B_{j-1}|\leq |A|$ and $\sum\limits_{j=1}^J|A_j|=|Y|\leq |A|$.
\end{proof}

\section{The sum-product type problems}

In this section, Theorems \ref{thm_difference}, \ref{thm_BalWoo_improvement}, \ref{thm_XYZ_decomp}  and \ref{thm_A+1A+1} will be proved. We quote the following results.

\begin{lemma} [Theorem 11 of \cite{Shk15}] \label{lem_d_3_AA}
Let $A\subseteq \bR$ be finite. Then
\[
|A+A| \gtrsim |A|^{58/37}d^+_3(A)^{-21/37}, \quad |AA| \gtrsim |A|^{58/37}d^\times_3(A)^{-21/37}.
\]
\end{lemma}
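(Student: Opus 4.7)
I will focus on the additive inequality $|A+A| \gtrsim |A|^{58/37}d_3^+(A)^{-21/37}$; the multiplicative version follows by an identical argument applied in the multiplicative group (e.g.\ after taking a logarithm of $A \setminus \{0\}$). The starting observation is that the definition of $d_3^+$ gives $E_3^+(A,B) \le d_3^+(A)|A||B|^2$ for every finite $B$, and this flexibility, freely choosing $B$ to fit the argument, is what powers the improvement over naive H\"older bounds. Indeed, the crude application of H\"older and the definitional bound with $B = A$ only yields $|A-A| \gtrsim |A|^{3/2}d_3^+(A)^{-1/2}$, which is strictly weaker than the claim (and on the wrong side, since one must also convert $|A-A|$ to $|A+A|$ with a further Pl\"unnecke loss).

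The plan is to follow Shkredov's spectral method. First, two successive dyadic pigeonholes produce a popular level set $P \subseteq A-A$ on which $r_{A-A}(x) \asymp t$ with $|P|t^3 \asymp E_3^+(A)$, together with a ``rich column'' set $A' \subseteq A$ on which additions by elements of $P$ are uniformly dense (so $|A'|q \asymp |P|t$ for some $q$ coming from the second pigeonhole). Next, Szemer\'edi-Trotter is invoked through Lemma \ref{lem_rich_lines}, applied to the point set $A'\times A'$ (or to a similarly symmetric configuration) against a family of lines indexed by $P$; this produces a moment inequality of third-moment type that intertwines $|A+A|$, $|A|$, $|P|$, $|A'|$, $t$, and $q$. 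In parallel, $d_3^+(A)$ is used to bound the energy $E_3^+(A,A')$ (or $E_3^+(A,P)$), giving a second inequality that replaces $E_3^+(A)$ by a quantity with explicit $d_3^+(A)$-dependence and quadratic $|A'|$- or $|P|$-dependence. Combining these two inequalities and optimising over the free dyadic scales yields the exponents $58/37$ and $21/37$.

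The main obstacle is the careful choice of the auxiliary set $B$ in $E_3^+(A,B) \le d_3^+(A)|A||B|^2$ so that it matches the set that naturally arises from the two pigeonholes: a wrong choice forfeits the improvement entirely. The rational exponents $58/37$ and $21/37$ arise as the unique solution of the linear system produced by balancing the Szemer\'edi-Trotter side (which contributes an exponent $2/3$ on an incidence count, via Lemma \ref{lem_rich_lines}) against the $d_3^+$-control (linear in $d_3^+$, quadratic in $|B|$), with the auxiliary constraint $|P|t \lesssim |A|^2$. I expect the bookkeeping of this optimisation, and in particular ensuring that the ``wrong-sign'' conversion from $|A-A|$ to $|A+A|$ is absorbed by the spectral input rather than introducing a Pl\"unnecke loss, to be the trickiest and most error-prone part of the argument; by contrast, the dyadic and incidence steps are standard in the spectral-method literature.
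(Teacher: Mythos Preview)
The paper does not prove this lemma: it is quoted as Theorem~11 of \cite{Shk15} and used as a black box, with no argument supplied here. There is therefore nothing in the present paper to compare your attempt against.

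As to the attempt itself, it is an outline rather than a proof. You correctly identify the ingredients that go into Shkredov's argument (dyadic popularity, a Szemer\'edi--Trotter input, and the flexibility of $d_3^+$ over an auxiliary set $B$), but you do not actually execute the optimisation that produces the exponents $58/37$ and $21/37$, and you explicitly anticipate that step as the hardest and most error-prone. Two specific concerns: first, the genuine proof in \cite{Shk15} proceeds via an operator/eigenvalue formulation (this is what ``spectral method'' refers to), not purely via the incidence-and-pigeonhole skeleton you sketch, and in particular the sumset $A+A$ enters directly rather than via a $|A-A|\to|A+A|$ conversion, so your worry about a Pl\"unnecke loss is a symptom of not yet having the right setup; second, your sketch does not specify which auxiliary set $B$ is fed into the bound $E_3^+(A,B)\le d_3^+(A)|A||B|^2$, and as you yourself note, a wrong choice forfeits the improvement. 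Until those two points are nailed down, the proposal remains a plausible plan but not a proof.
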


\begin{lemma} [Corollary 4.6 of \cite{KonRud13}] \label{lem_d_3_A/A}
Let $A\subseteq \bR$ be finite. Then
\begin{equation} \label{eq_lem_E3_EA-A}
E_3^+(A)E^+(A,A-A)|A-A|\gg |A|^8.
\end{equation}
In particular,
\[
|A-A| \gtrsim |A|^{8/5}d^+_3(A)^{-3/5},\quad |A/A| \gtrsim |A|^{8/5}d^\times_3(A)^{-3/5}.
\]
\end{lemma}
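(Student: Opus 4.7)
The plan is to first establish the pivotal inequality
\[
E_3^+(A)\,E^+(A,A-A)\,|A-A|\gg|A|^8,
\]
and then deduce the two explicit bounds on $|A-A|$ and $|A/A|$ using the defining property of $d_3^+(A)$ (resp.\ $d_3^\times(A)$).

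For the pivotal inequality, set $\sigma(d)=r_{A-A}(d)$ and $\tau(x)=r_{A-(A-A)}(x)$, so that $\sum_{d\in A-A}\sigma(d)=|A|^2$, $E_3^+(A)=\sum_d\sigma(d)^3$, and $E^+(A,A-A)=\sum_x\tau(x)^2$. The starting point is the standard Cauchy--Schwarz bound $|A|^4\leq|A-A|\cdot E^+(A)$. The sharpening couples the additive structure of $A$ with that of $A-A$: using the double-counting identity
\[
\sum_x\sigma(x)\tau(x)=\sum_{e\in A-A}r_{A+A-A}(e)
\]
(obtained by expanding both $\sigma$ and $\tau$ in their definitions) and a second Cauchy--Schwarz relating this quantity to $\sqrt{E^+(A)\,E^+(A,A-A)}$, one replaces the crude factor $|A|^2|A-A|$ arising from combining $|A|^4\leq|A-A|\cdot E^+(A)$ with $E^+(A)^2\leq|A|^2E_3^+(A)$ by the genuinely smaller $E^+(A,A-A)\cdot|A-A|$. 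I expect this sharpening — the proof of the pivotal inequality — to be the main technical obstacle.

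For the corollary $|A-A|\gtrsim|A|^{8/5}d_3^+(A)^{-3/5}$, bound each energy in the pivotal inequality by $d_3^+(A)$. Directly, $E_3^+(A)\leq d_3^+(A)|A|^3$ by taking $B=A$ in the definition of $d_3^+$. Next, by Cauchy--Schwarz on the decomposition $\tau^2=\tau^{1/2}\cdot\tau^{3/2}$,
\[
E^+(A,A-A)^2\leq\Big(\sum_x\tau(x)\Big)\Big(\sum_x\tau(x)^3\Big)=|A|\,|A-A|\cdot E_3^+(A,A-A)\leq d_3^+(A)|A|^2\,|A-A|^3,
\]
so $E^+(A,A-A)\leq d_3^+(A)^{1/2}|A|\,|A-A|^{3/2}$. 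Substituting both bounds into the pivotal inequality yields $|A|^8\ll d_3^+(A)^{3/2}|A|^4\,|A-A|^{5/2}$, which rearranges to $|A-A|\gtrsim|A|^{8/5}d_3^+(A)^{-3/5}$.

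The bound on $|A/A|$ is obtained by repeating the entire argument in the multiplicative group $(\mathbb{R}^\times,\cdot)$: the standing assumption $0\notin A$ lets one replace $A-A$ by $A/A$, $E^+$ by $E^\times$, and $d_3^+$ by $d_3^\times$ throughout, yielding the multiplicative pivotal inequality $|A|^8\ll E_3^\times(A)E^\times(A,A/A)|A/A|$ and, via the identical Cauchy--Schwarz step, $|A/A|\gtrsim|A|^{8/5}d_3^\times(A)^{-3/5}$. All the conceptual difficulty is concentrated in the pivotal inequality itself; the passage from there to the two explicit bounds is a routine combination of Cauchy--Schwarz with the definitions of $d_3^+$ and $d_3^\times$.
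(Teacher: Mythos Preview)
The paper does not prove this lemma; it is quoted as Corollary~4.6 of \cite{KonRud13}, so there is no in-paper argument to compare against. Your deduction of the two ``in particular'' bounds from the pivotal inequality is correct and is the standard route: $E_3^+(A)\le d_3^+(A)\,|A|^3$ directly from the definition, and Cauchy--Schwarz together with $E_3^+(A,A-A)\le d_3^+(A)\,|A|\,|A-A|^2$ gives $E^+(A,A-A)\le d_3^+(A)^{1/2}|A|\,|A-A|^{3/2}$; substituting and solving for $|A-A|$ yields $|A-A|\gtrsim |A|^{8/5}d_3^+(A)^{-3/5}$, and the multiplicative statement is the verbatim analogue in $(\bR^\times,\cdot)$.

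The gap is in the pivotal inequality itself, and you flag it yourself. Your sketch records the (correct) identity $\sum_x\sigma(x)\tau(x)=\sum_{e\in A-A}r_{A+A-A}(e)$ and the Cauchy--Schwarz \emph{upper} bound $\sum_x\sigma(x)\tau(x)\le\bigl(E^+(A)\,E^+(A,A-A)\bigr)^{1/2}$, but you never supply a \emph{lower} bound on $\sum_x\sigma(x)\tau(x)$ of the right order, and without one these two ingredients cannot be combined to yield $|A|^8\ll|A-A|\,E_3^+(A)\,E^+(A,A-A)$. The sentence ``one replaces the crude factor $|A|^2|A-A|$ \ldots\ by the genuinely smaller $E^+(A,A-A)\cdot|A-A|$'' only observes that the target inequality is \emph{sharper} than the crude $|A|^6\le E_3^+(A)\,|A-A|^2$; it does not give a mechanism for proving the sharper statement. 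Indeed, the best one can squeeze from your listed ingredients (after translating so that $0\in A$, whence $\tau\ge\sigma$ pointwise) is $E^+(A)\le E^+(A,A-A)$, which is too weak. The pivotal inequality is tight both for arithmetic progressions and for generic sets, so it genuinely requires the more delicate Cauchy--Schwarz argument of Konyagin--Rudnev rather than a single global application; as written, your outline does not close.
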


\begin{lemma} [Theorem 5.4 of \cite{Shk13}]\label{lem_d3E+}
Let $A\subseteq \bR$ be finite. There are real numbers $\Delta,\tau>0$ and sets $D$, $S_\tau$ such that
\[
E^+(A)\approx |D|\Delta^2,\quad \Delta\lesssim \frac{E_3^+(A)}{E^+(A)}, \quad|S_\tau|\lesssim \frac{E_3^+(A)}{\tau^3},
\]
and
\[
\frac{E^+(A)^6}{|A|^6} \lesssim E_3^+(A)\Delta^3 \cdot \sup\limits_\tau\min\left\{\Delta \tau E^+(A,D), \,\tau^2 E^+(A,D)^{1/2}E^+(A,S_\tau)^{1/2}\right\}.
\]
In particular,
\[
E^+(A) \lesssim d^+_3(A)^{7/13}|A|^{32/13}, \quad E^\times(A) \lesssim d^\times_3(A)^{7/13}|A|^{32/13}.
\]
\end{lemma}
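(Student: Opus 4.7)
The plan is to follow Shkredov's spectral/eigenvalue framework. First I would construct $\Delta$, $D$, and $S_\tau$ by dyadic decomposition of $r_{A-A}$. Writing $E^+(A) = \sum_x r_{A-A}(x)^2$ and stratifying the sum according to the dyadic interval $[\Delta,2\Delta)$ containing $r_{A-A}(x)$, the $O(\log|A|)$ dyadic scales let me pigeonhole a scale $\Delta>0$ for which the set $D := \{x : r_{A-A}(x) \in [\Delta,2\Delta)\}$ satisfies $|D|\Delta^2 \approx E^+(A)$. The bound $\Delta \lesssim E_3^+(A)/E^+(A)$ follows from $\Delta \cdot E^+(A) \lesssim \Delta^3|D| \leq \sum_x r_{A-A}(x)^3 = E_3^+(A)$. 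For each dyadic parameter $\tau$, put $S_\tau := \{x : r_{A-A}(x) \geq \tau\}$; the trivial inequality $\tau^3|S_\tau| \leq E_3^+(A)$ gives the claimed bound on $|S_\tau|$.

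For the main displayed inequality, I would view $r_{A-A}$ as the kernel of the convolution operator $T_A$ on $\ell^2(\bR)$, whose Schatten norms encode the energies (roughly $\|T_A\|_{S^2}^4 = E^+(A)$ and $\|T_A\|_{S^3}^3 = E_3^+(A)$). The strategy is to express $E^+(A)^6/|A|^6$ as an incidence count over triples of popular differences, then apply Cauchy--Schwarz twice, stratifying by whether one slot falls into the moderate-popularity set $D$ (contributing a factor of $\Delta$) or into the very-popular tail $S_\tau$ (contributing a factor of $\tau$). The two terms inside the minimum correspond to two ways of closing the Cauchy--Schwarz chain: a direct pairing of $D$ against $S_\tau$ that produces $\Delta\tau E^+(A,D)$, versus an extra Cauchy--Schwarz step that produces the geometric mean $\tau^2 E^+(A,D)^{1/2}E^+(A,S_\tau)^{1/2}$. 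The supremum over $\tau$ appears because $\tau$ is a free dyadic scale whose optimum depends on $A$.

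The hard part will be running the Cauchy--Schwarz chain so that exactly $E_3^+(A)\Delta^3\cdot \min\{\ldots\}$ survives on the right without leaking factors of $|A|$ or $\Delta$. A convenient bookkeeping device is to route the $L^2$ norms of $r_{A-D}$ and $r_{A-S_\tau}$ through the cross-energies $E^+(A,D)$ and $E^+(A,S_\tau)$ using a Pl\"unnecke--Ruzsa type bound on $|A+D|$ and $|A+S_\tau|$; getting the exponent $3$ on $\Delta$ (and not something weaker) is where one pays careful attention to how many of the three ``popular'' slots are absorbed by the $D$-level set versus merely contributing $r_{A-A} \leq \Delta$ bounds pointwise.

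For the ``in particular'' conclusion, I would insert into the main inequality the straightforward bound $E^+(A,D)^2 \leq E_3^+(A,D)\cdot|A||D| \leq d_3^+(A)|A|^2|D|^3$, that is, $E^+(A,D) \leq d_3^+(A)^{1/2}|A||D|^{3/2}$, and the analogous estimate for $E^+(A,S_\tau)$. Substituting $|D| \approx E^+(A)/\Delta^2$ and $|S_\tau| \lesssim E_3^+(A)/\tau^3$, bounding $E_3^+(A) \leq d_3^+(A)|A|^3$ and $\Delta \leq E_3^+(A)/E^+(A)$, and optimizing over $\tau$ should collapse the expression to $E^+(A)^{13} \lesssim d_3^+(A)^7 |A|^{32}$, i.e., $E^+(A) \lesssim d_3^+(A)^{7/13}|A|^{32/13}$. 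The multiplicative statement $E^\times(A) \lesssim d_3^\times(A)^{7/13}|A|^{32/13}$ follows by repeating the entire argument with $r_{A/A}$ in place of $r_{A-A}$, working in the multiplicative group of nonzero reals (which we may assume throughout, per the standing convention that $0 \notin A$).
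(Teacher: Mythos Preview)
The paper does not prove this lemma: it is quoted verbatim as Theorem~5.4 of \cite{Shk13} and used as a black box (see the proof of Lemma~\ref{lem_E3E2+E2times}, where the optimization over $\tau$ is carried out exactly along the lines you describe in your final paragraph). So there is no in-paper proof to compare against.

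On the merits of your sketch: the dyadic construction of $\Delta$, $D$, $S_\tau$ and the derivation of $\Delta\lesssim E_3^+(A)/E^+(A)$ and $|S_\tau|\lesssim E_3^+(A)/\tau^3$ are correct and standard. Your treatment of the ``in particular'' clause is also right, and in fact the paper performs precisely this substitution-and-optimization in the proof of Lemma~\ref{lem_E3E2+E2times} (plugging $E^+(A,B)\ll |A||B|^{3/2}d_3^+(A)^{1/2}$, then $|S_\tau|\lesssim E_3^+/\tau^3$, then balancing $\tau$), so you have that part in hand.

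The gap is in the middle paragraph. Shkredov's proof of the main displayed inequality is not really ``Cauchy--Schwarz twice with a Pl\"unnecke--Ruzsa bookkeeping device''; it is a genuine eigenvalue argument for the symmetric operator $T_A^D(x,y)=1_D(x-y)1_A(x)1_A(y)$. The quantity $E^+(A)^3/|A|^2$ is bounded below by the top eigenvalue cubed, and one controls $\sum_j \mu_j^3$ by expanding it as a sum over $(a,b,c)\in A^3$ with all pairwise differences in $D$, then stratifying that triple sum over the auxiliary level sets $S_\tau$. The cross-energies $E^+(A,D)$ and $E^+(A,S_\tau)$ enter directly from this expansion, not via any Pl\"unnecke--Ruzsa step on $|A+D|$ or $|A+S_\tau|$; invoking Pl\"unnecke--Ruzsa here would lose exactly the factors you are worried about ``leaking''. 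If you want to write out a full proof, you should follow the eigenvalue route in \cite{Shk13} rather than trying to improvise a purely combinatorial Cauchy--Schwarz chain.
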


\begin{proof} [Proof of Theorem \ref{thm_difference}]
By Theorem \ref{thm_XY_decomp}, there are subsets $X,Y$ of $A$ with $|X|,|Y|\geq |A|/2$ such that
\[
E_3^+(X)^4 E^\times (Y)^3 \lesssim |A|^{22}.
\]
We apply Theorem \ref{thm_Shakan2} to the set $X$. Then there are subsets $X^\prime$, $Y^\prime$ of $X$ with $|X^\prime|,|Y^\prime|\geq |X|/2$ such that
\begin{equation} \label{eq_pf_difference_eq0}
d_3^+(X^\prime)^2d_3^\times (Y^\prime)^2 \lesssim |X|^2.
\end{equation}
Since $X^\prime\subseteq X$, we have
\begin{equation} \label{eq_pf_difference_eq1}
E_3^+(X^\prime)^4 E^\times (Y)^3 \lesssim |A|^{22}.
\end{equation}

Note that
\begin{equation} \label{eq_pf_difference_eq2}
E^+(X^\prime,X^\prime-X^\prime)^4 \leq |X^\prime|^2|X^\prime-X^\prime|^2 E_3^+(X^\prime,X^\prime-X^\prime)^2 \leq |X^\prime|^4|X^\prime-X^\prime|^6 d_3^+(X^\prime)^2.
\end{equation}
By Lemmas \ref{lem_d_3_AA} and \ref{lem_d_3_A/A}, we have
\begin{equation} \label{eq_pf_difference_eq3}
d_3^\times(Y^\prime)^{-2} \lesssim |Y^\prime Y^\prime|^{74/21}|Y^\prime|^{-116/21},\quad d_3^\times(Y^\prime)^{-2} \lesssim |Y^\prime/Y^\prime|^{10/3}|Y^\prime|^{-16/3}.
\end{equation}
Recall \eqref{eq_lem_E3_EA-A}, i.e.,
\begin{equation} \label{eq_pf_difference_eq4}
E_3^+(X^\prime)E^+(X^\prime,X^\prime-X^\prime)|X^\prime-X^\prime| \gg |X^\prime|^8.
\end{equation}
One also has
\begin{equation} \label{eq_pf_difference_eq5}
E^\times (Y) \geq \frac{|Y|^4}{|YY|},\quad E^\times (Y) \geq \frac{|Y|^4}{|Y/Y|}.
\end{equation}
Now we put \eqref{eq_pf_difference_eq0}-\eqref{eq_pf_difference_eq5} all together. Calculation leads to
\[
|A-A|^{10}|AA|^{137/21}\gtrsim |A|^{452/21},\quad |A-A|^{10}|A/A|^{19/3}\gtrsim |A|^{64/3}.
\]
It follows that
\[
\max\{|A-A|,\, |AA|\}\gtrsim |A|^{452/347},\quad \max\{|A-A|,\, |A/A|\}\gtrsim |A|^{64/49}.
\]
Now Theorem \ref{thm_difference} follows.
\end{proof}

\begin{proof} [Proof of Theorem \ref{thm_BalWoo_improvement}]
The proof is similar with that of \cite[Corollary 21]{KonShk16}. Set $B_1=A$ and $C_0=\emptyset$. We use iteration to construct two sequences of sets
\[
B_1\supseteq B_2\supseteq B_3 \supseteq\ldots,\quad C_0\subseteq C_1\subseteq C_2\subseteq \ldots.
\]

Suppose that $B_j$ and $C_{j-1}$ has been constructed for some $j\geq 1$. If $E_3^+ (B_j) > |A|^4/M$, we apply Lemma \ref{lemma_E_3^+_basic} to obtain a subset $D_j$ of $B_j$ such that
\[
|D_j| \gtrsim \frac{E_3^+(B_j)^{1/2}}{|B_j|} \geq \frac{|A|}{M^{1/2}},
\]
and
\[
E^\times (D_j) \lesssim \frac{|D_j|^4|B_j|^{10/3}}{E_3^+(B_j)^{4/3}} \lesssim \frac{|D_j|^4M^{4/3}}{|A|^2}.
\]
Now we set $B_{j+1}=B_j\setminus D_j$, $C_j=C_{j-1}\sqcup D_j$ and proceed the iteration with $j+1$ instead of $j$. If $E_3^+ (B_j) \leq  |A|^4/M$ for some $j=K$, then we stop the iteration and take $B=B_{K}$, $C=C_{K-1}$. By the construction, we have $A=B_j\sqcup C_{j-1}$ for $1\leq j\leq K$. 

Combining Lemma \ref{lemma_E_l_4}, we conclude that

\begin{eqnarray*}
E^\times(C) &= E^\times\big(\bigcup\limits_{j=1}^{K-1} D_j\big)\lesssim \left(\sum\limits_{j=1}^{K-1} E^\times(D_j)^{1/4}\right)^4 \\
&\lesssim \frac{M^{4/3}}{|A|^2}\left(\sum\limits_{j=1}^{K-1} |D_j|\right)^4 \leq  M^{4/3}|A|^2,
\end{eqnarray*}
since $D_1,\ldots,D_{K-1}$ are disjoint and $\sum\nolimits_{j=1}^{K-1} |D_j|\leq |A|$.

Next, one deduces from $E^+(B)\leq |A|E_3^+(B)^{1/2}$ that
\[
E^+(B) \lesssim |A|\left(\frac{|A|^4}{M}\right)^{1/2}\leq \frac{|A|^3}{M^{1/2}}.
\]
Now we optimise over $M$ by taking $M=|A|^{6/11}$, which leads to
\[
E^+(B),\,E^\times (C)\,\lesssim\, |A|^{3-3/11}.
\]
\end{proof}

As for Theorem \ref{thm_XYZ_decomp}, we need to prove another lemma.

\begin{lemma} \label{lem_E3E2+E2times}
Let $A\subseteq \bR$ be finite. Then there are subsets $X,Y$ of $A$ with $X\cup Y=A$ and $|X|,|Y|\geq |A|/2$, such that
\[
E_3^+(X) \gtrsim E^+(X)^{13/6}E^\times(Y)^{13/42}|A|^{-137/42}.
\]
\end{lemma}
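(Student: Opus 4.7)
The plan is to combine Theorem \ref{thm_Shakan2} with Lemma \ref{lem_d3E+} in two stages, exploiting the trick alluded to in the paper: use the Shakan decomposition to bound $d_3^+(X)$ \emph{before} unpacking the spectral inequality for $X$.

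First I would apply Theorem \ref{thm_Shakan2} to obtain $X,Y\subseteq A$ with $X\cup Y=A$, $|X|,|Y|\geq |A|/2$, and $d_3^+(X)\,d_3^\times(Y)\lesssim |A|$. Applying the multiplicative version of the ``in particular'' part of Lemma \ref{lem_d3E+} to $Y$ yields $E^\times(Y)\lesssim d_3^\times(Y)^{7/13}|Y|^{32/13}$, hence $d_3^\times(Y)\gtrsim E^\times(Y)^{13/7}|A|^{-32/7}$, and therefore
\[
d_3^+(X)\;\lesssim\; \frac{|A|}{d_3^\times(Y)}\;\lesssim\; E^\times(Y)^{-13/7}|A|^{39/7}.
\]

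Next I would apply Lemma \ref{lem_d3E+} to the set $X$ but, crucially, I would retain $E_3^+(X)$ on the left-hand side instead of eliminating it via $\Delta\lesssim E_3^+(X)/E^+(X)$. For any $B$, Cauchy-Schwarz gives $E^+(X,B)^2\leq |X||B|\,E_3^+(X,B)\leq d_3^+(X)|X|^2|B|^3$, so $E^+(X,B)\leq d_3^+(X)^{1/2}|X||B|^{3/2}$ for $B\in\{D,S_\tau\}$. Substituting these bounds into the sup--min term, together with $|D|\approx E^+(X)/\Delta^2$ and $|S_\tau|\lesssim E_3^+(X)/\tau^3$, one notes that $\Delta\tau E^+(X,D)$ is increasing in $\tau$ while $\tau^2 E^+(X,D)^{1/2}E^+(X,S_\tau)^{1/2}$ is decreasing in $\tau$; hence the maximum of the min is attained at the balance $\tau\approx E_3^+(X)^{3/5}\Delta^{-4/5}|D|^{-3/5}$. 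Plugging in and using $\Delta\lesssim E_3^+(X)/E^+(X)$ reduces the spectral inequality, after tracking exponents, to
\[
E^+(X)^{13/2}\;\lesssim\; E_3^+(X)^{3}\,|X|^{7}\,d_3^+(X)^{1/2},
\]
equivalently $E_3^+(X)\gtrsim E^+(X)^{13/6}|X|^{-7/3}\,d_3^+(X)^{-1/6}$.

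Finally I would insert the bound on $d_3^+(X)$ from the first stage: $d_3^+(X)^{-1/6}\gtrsim E^\times(Y)^{13/42}|A|^{-13/14}$. Using $|X|\leq |A|$ so that $|X|^{-7/3}\geq |A|^{-7/3}$, the exponents on $|A|$ combine to $-\tfrac{7}{3}-\tfrac{13}{14}=-\tfrac{137}{42}$, producing exactly
\[
E_3^+(X)\;\gtrsim\; E^+(X)^{13/6}\,E^\times(Y)^{13/42}\,|A|^{-137/42},
\]
as desired.

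The main obstacle is the middle step: extracting the bound $E_3^+(X)\gtrsim E^+(X)^{13/6}|X|^{-7/3}d_3^+(X)^{-1/6}$ from Lemma \ref{lem_d3E+} while keeping $E_3^+(X)$ on the left (the published ``in particular'' statement integrates $E_3^+$ away via $\Delta\lesssim E_3^+/E^+$). The monotonicity of the two terms in $\tau$ makes the existence of a balance point transparent, so the difficulty is really bookkeeping the exponents $\tfrac{13}{6}$, $\tfrac{7}{3}$, $\tfrac{1}{6}$ through the calculation rather than any new conceptual input.
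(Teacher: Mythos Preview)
Your proposal is correct and follows essentially the same route as the paper: apply Theorem \ref{thm_Shakan2} to get $d_3^+(X)d_3^\times(Y)\lesssim |A|$, feed the spectral inequality of Lemma \ref{lem_d3E+} for $X$ with the bounds $E^+(X,B)\leq d_3^+(X)^{1/2}|X||B|^{3/2}$ and $|S_\tau|\lesssim E_3^+(X)/\tau^3$, balance at $\tau=\Delta^{-4/5}|D|^{-3/5}E_3^+(X)^{3/5}$ to reach $E^+(X)^{13/2}\lesssim |X|^7E_3^+(X)^3d_3^+(X)^{1/2}$, and then eliminate $d_3^+(X)$ via $d_3^+(X)\lesssim |A|^{39/7}E^\times(Y)^{-13/7}$. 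The only cosmetic difference is that you invoke the multiplicative ``in particular'' bound for $Y$ at the start rather than the end, but the arithmetic and the key intermediate inequality are identical to the paper's.
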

\begin{proof}
By Theorem \ref{thm_Shakan2}, there are subsets $X,Y$ of $A$ such that $X\cup Y=A$, $|X|,|Y|\geq |A|/2$ and $d_3^+(X)d_3^\times(Y)\lesssim |A|$. We apply Lemma \ref{lem_d3E+} to the set $X$. Then there are real numbers $\Delta,\tau$ and sets $D$, $S_\tau$ such that 
\[
\frac{E^+(X)^6}{|X|^6} \lesssim E_3^+(X) \Delta^3 \cdot\sup\limits_\tau\min\{\Delta \tau E^+(X,D), \tau^2 E^+(X,D)^{1/2}E^+(X,S_\tau)^{1/2}\}.
\]
Note that
\[
E^+(X,B) \ll |X||B|^{3/2}d^+_3(X)^{1/2}
\]
for any set $B$. Combining the bound $|S_\tau|\lesssim \tau^{-3}E_3^+(X)$, one has
\[
\frac{E^+(X)^6}{|X|^6} \lesssim E_3^+(X) \Delta^3 \cdot\sup\limits_\tau\min \left\{\Delta \tau |X||D|^{3/2}d_3^+(X)^{1/2},\, \tau^{-1/4} |X||D|^{3/4}E_3^+(X)^{3/4}d_3^+(X)^{1/2}\right\}.
\]
The supremum over $\tau$ can be bounded by the value at $\tau=\Delta^{-4/5}|D|^{-3/5}E_3^+(X)^{3/5}$. So
\[
E^+(X)^6\lesssim |X|^6\cdot E_3^+(X) \Delta^3 \cdot\Delta^{1/5} |X| |D|^{9/10} E_3^+(X)^{3/5}d_3^+(X)^{1/2}.
\]
Next, insert the bound $|D|\Delta^2 \approx E^+(X)$ and $\Delta \lesssim E_3^+(X)/E^+(X)$, one deduces that
\[
E^+(X)^{13/2}\lesssim |X|^7 E_3^+(X)^3 d_3^+(X)^{1/2}.
\]
Now, the last inequality in Lemma \ref{lem_d3E+} shows that
\[
d_3^+(X) \lesssim \frac{|A|}{d_3^\times (Y)} \lesssim \frac{|A|^{39/7}}{E^\times(Y)^{13/7}}.
\]
Hence
\[
E_3^+(X) \gtrsim E^+(X)^{13/6}E^\times(Y)^{13/42}|A|^{-137/42}.
\]
The lemma then follows.
\end{proof}

\begin{proof} [Proof of Theorem \ref{thm_XYZ_decomp}]
Applying Theorem \ref{thm_XY_decomp}, one obtains subsets $W,Z$ of $A$ such that $A=W\cup Z$, $|W|,|Z|\geq |A|/2$ and
\[
E_3^+(W)^4 E^\times(Z)^3\lesssim |A|^{22}.
\]
By Lemma \ref{lem_E3E2+E2times}, we further obtain subsets $X,Y$ of $W$ with $W=X\cup Y$ and $|X|,|Y|\geq |W|/2$, such that
\[
E_3^+(X) \gtrsim E^+(X)^{13/6}E^\times(Y)^{13/42}|W|^{-137/42}.
\]
Then $A=X\cup Y\cup Z$ and $|X|,|Y|\geq |A|/4$, $|Z|\geq |A|/2$. Moreover, noting that $E_3^+(X)\leq E_3^+(W)$, we conclude that
\[
E^+(X)^{182}E^\times(Y)^{26}E^\times(Z)^{63}\lesssim |A|^{736}.
\]
The proof is completed.
\end{proof}

To prove Theorem \ref{thm_A+1A+1}, we quote the following lemma.

\begin{lemma} [Lemma 4 of \cite{Shk17}] \label{lem_exists_zA}
Let $A\subseteq \bR$ be a finite set. Then there is a $z$ such that
\[
\sum\limits_{x\in zA}|zA\cap x(zA)|\gg \frac{E^\times(A)}{|A|}.
\]
\end{lemma}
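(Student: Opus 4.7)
The plan is to unpack the double sum into a clean count of triples in $A^3$ satisfying a multiplicative relation, then average over the one-parameter family $z = 1/d$ for $d \in A$. This family is perfectly tuned so that the total sum collapses to $E^\times(A)$ exactly, and a single pigeonhole on $d$ closes the argument.

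First, for any nonzero $z$ set $N(z) := \sum_{x \in zA}|zA\cap x(zA)|$. Since $y \in x(zA)$ is equivalent to $y/x \in zA$ (valid because $0 \notin A$, as recalled at the end of Section 1, so $x \neq 0$), $N(z)$ is the number of triples $(x,y,w) \in (zA)^3$ with $y = xw$. Writing $x = za$, $y = zb$, $w = zc$ with $a,b,c \in A$, the relation $y=xw$ becomes $zb = z^2 ac$, so one factor of $z$ cancels and
\[
N(z) = \#\{(a,b,c) \in A^3 : b = z\,ac\}.
\]

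Now specialize to $z = 1/d$ for $d \in A$: the equation $b = ac/d$ rearranges to $bd = ac$, hence
\[
N(1/d) = \#\{(a,b,c) \in A^3 : ac = bd\}.
\]
Summing this over $d \in A$ yields
\[
\sum_{d \in A} N(1/d) = \#\{(a,b,c,d) \in A^4 : ac = bd\} = E^\times(A),
\]
the last identity being just a relabeling of the defining equation $a_1 a_2 = a_3 a_4$. Pigeonhole on $d$ produces some $d \in A$ with $N(1/d) \geq E^\times(A)/|A|$, and taking $z = 1/d$ gives the lemma.

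There is essentially no obstacle here; the only point requiring a choice is the averaging set for $z$. Summing $N(z)$ over all $z \in \bR$ with $N(z)>0$ totals $|A|^3$, but the support is contained in $A/(A\cdot A)$, whose size can be as large as $|A|^3$, so an unrestricted averaging gives nothing useful. Restricting to the one-dimensional slice $z \in 1/A$ is the key trick: the total then becomes \emph{exactly} $E^\times(A)$ (this is what makes the multiplicative equation match the definition), so the pigeonhole denominator is the minimal $|A|$, producing the desired bound.
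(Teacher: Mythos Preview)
Your argument is correct. The paper does not give its own proof of this lemma; it is quoted verbatim as Lemma~4 of \cite{Shk17} and used as a black box in the proof of Theorem~\ref{thm_A+1A+1}. So there is nothing in the present paper to compare against.

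For the record, your proof is essentially the standard one (and, modulo notation, the one in Shkredov's original paper): rewrite $\sum_{x\in zA}|zA\cap x(zA)|$ as the count of triples $(a,b,c)\in A^3$ with $b=zac$, then average over the slice $z\in 1/A$ so that the total becomes exactly $E^\times(A)$, and pigeonhole. The only mild caveat is that you silently use $0\notin A$ both to divide by $x$ and to make $z=1/d$ well-defined; this is covered by the paper's standing convention stated at the end of the introduction, so no gap.
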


\begin{proof} [Proof of Theorem \ref{thm_A+1A+1}]
The arguments are the same as in the proof of \cite[Theorem 12]{Shk17}. We only sketch the main steps here.

Without loss of generality, we assume that $|AA|\leq K|A|$. Let $\alpha,\beta\neq 0$. By Theorem \ref{thm_asy_1}, we have
\begin{eqnarray*}
&\sum\limits_{a,b \in A} E^\times((AA/a)+\alpha,(AA/b)+\beta) = \sum\limits_{a,b \in A} E^\times((AA/\alpha)+a,(AA/\beta)+b)\\
&\quad\quad \quad  = T(A,AA/\alpha,AA/\beta) \lesssim |A||AA|^3 \leq K^3|A|^4.
\end{eqnarray*}

Note that $A\subseteq AA/a$ for any $a\in A$. There are some $a,b\in A$ such that
\[
E^\times(A+\alpha,A+\beta)\leq E^\times \left(AA/a+\alpha, AA/a^\prime +\beta\right) \lesssim K^3 |A|^2.
\]
In particular, one has
\[
|(A+\alpha)(A+\beta)| \gg \frac{|A+\alpha|^2|A+\beta|^2}{E^\times (A+\alpha,A+\beta)} \gtrsim K^{-3}|A|^2.
\]

Moreover, Lemma \ref{lem_exists_zA} shows that there is some $z\in \bR$ such that
\[
\sum\limits_{c\in zA}|zA\cap c(zA)| \gg \frac{E^\times (A)}{|A|}\gg \frac{|A|^2}{K}.
\]
Let
\[
n(x) = \#\{(c,d)\in (zA)\times (zA):\, cd \in zA,\, (c+\alpha)(d+\beta)=x\}.
\]
Then
\begin{align*}
\frac{|A|^4}{K^2} &\ll \left(\sum\limits_{c\in zA}|zA\cap c(zA)|\right)^2\\
 &= \left(\sum\limits_{x\in zA+\alpha zA+\beta zA+\alpha \beta} n(x)\right)^2 \leq |A+\alpha A+\beta A| \sum\limits_{x}n^2(x).
\end{align*}
On the other hand,
\begin{align*}
\sum\limits_{x}n^2(x) &= \#\{(c_1,d_1,c_2,d_2)\in (zA)^4:\, c_1d_1\in zA,\, c_2d_2\in zA,\, (c_1+\alpha)(d_1+\alpha)=(c_2+\alpha)(d_2+\alpha)\}\\
&\leq E^\times (A+\alpha/z,A+\beta/z)\lesssim K^3|A|^2.
\end{align*}
The theorem now follows by combining the above two inequalities.
\end{proof}




\section{Regularization lemmas and remarks}

At the same time when this paper was written out, Rudnev and Stevens \cite{RudSte20} showed some regularization lemma which has particular interest in the study of Balog-Wooley decomposition. We quote the following lemma with its proof, which is a further exploration of \cite[Lemma 1]{RudSte20} and is provided by Rudnev via personal communication. 

\begin{lemma} [Rudnev-Stevens] \label{lem_regularization}
Let $k> 1$ be a given integer. Let $A$ be a finite subset of $\bR$. Then there are sets $B,B^{\prime\prime}$ with $B^{\prime\prime}\subseteq B\subseteq A$ and $|B^{\prime\prime}|\gtrsim_k |B|\gg_k |A|$, such that the following property holds: there is a number $1\leq t\leq |B|$ and a set $P=\{x\in B-B:\, t\leq r_{B-B}(x)<2t\}$ such that $E_k^+(B) \approx_k |P|t^k$ and $r_{P+B}(b)\approx_k |P|t|A|^{-1}$ for any $b\in B^{\prime\prime}$.
\end{lemma}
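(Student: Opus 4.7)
The plan is to extract $t$ and $P$ via dyadic decomposition of $E_k^+(A)$, regularize the values of $r_{P+A}$ by a second dyadic pigeonhole, and iterate in the unbalanced case.

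First, I would apply the standard dyadic decomposition to $E_k^+(A) = \sum_x r_{A-A}(x)^k$, grouping $x \in A - A$ by which window $[t,2t)$ contains $r_{A-A}(x)$; pigeonhole in $t$ yields a scale $t$ and a set $P = \{x \in A-A:\, t \leq r_{A-A}(x) < 2t\}$ with $|P|t^k \approx_k E_k^+(A)$. The identity $\sum_{a \in A} r_{P+A}(a) = \sum_{x \in P} r_{A-A}(x) \approx |P|t$ shows that the mean value of $r_{P+A}$ on $A$ is $|P|t/|A|$. A second dyadic pigeonhole applied to the values of $r_{P+A}$ then produces a dyadic level $q$ and a set $A^{*} = \{a \in A:\, q \leq r_{P+A}(a) < 2q\}$ with $q|A^{*}| \gtrsim |P|t/\log|A|$.

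In the balanced case, when $q$ lies within a polylogarithmic factor of the mean $|P|t/|A|$, one has $|A^{*}| \gtrsim_k |A|$, and the conclusion follows with $B = A$ and $B'' = A^{*}$: the identity $r_{P+B}(b) = r_{P+A}(b) \asymp q \asymp |P|t/|A|$ for $b \in B''$ holds because $B = A$, while the first two conditions on $P$, $t$ are built in. In the unbalanced case, where $q$ is much larger than $|P|t/|A|$, the set $A^{*}$ has size $\ll_k |A|$ but carries nearly all of the $P+A$ representation mass; I would then iterate by applying the whole procedure (both dyadic steps) to a refined working set $A_{j+1}$, producing new parameters $t_j$, $P_j$, $q_j$ at each step.

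To force termination, I would track a potential function such as the concentration ratio $\mu_j/|A_j|$ with $\mu_j = |P_j|t_j/|A_j|$: this ratio grows by at least a polylogarithmic factor at each unbalanced step (that is precisely the content of being unbalanced), and it is bounded above by $1$, so the process stops after $O_k(\log\log|A|)$ iterations, which is absorbed into the $\gtrsim_k$ and $\gg_k$ losses. The main obstacle is the careful bookkeeping of these polylog factors so that $|B| \gg_k |A|$ at the end; a secondary point is that the description $P = \{x \in B - B:\, t \leq r_{B-B}(x) < 2t\}$ with respect to the final $B$ follows automatically when each $P_j$ is defined in terms of $A_j - A_j$ and one takes $B = A_J$ for the stopping index $J$.
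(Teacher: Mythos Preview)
Your balanced case is fine, but the unbalanced case has a genuine gap: you never say what the refined set $A_{j+1}$ is, and your termination argument does not hold as stated. If $A_{j+1}=A^{*}$ (the popular dyadic level), then $|A_{j+1}|\ll_k |A_j|$ and you cannot maintain $|B|\gg_k|A|$ at the end. If instead $A_{j+1}=A_j\setminus A_j^{*}$, then $|A_{j+1}|\approx|A_j|$, but the parameters $t_{j+1},P_{j+1}$ are extracted \emph{afresh} from $E_k^+(A_{j+1})$ and bear no a priori relation to $q_j$ or $\mu_j$; there is no reason the ratio $\mu_j/|A_j|=|P_j|t_j/|A_j|^2$ should increase. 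Being unbalanced at step $j$ is a statement about the distribution of $r_{P_j+A_j}$ on $A_j$, not about step $j+1$.

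The missing idea, which the paper supplies, is an \emph{energy decrement}. One fixes a threshold $|G_i|/(\varepsilon|A_i|)$ with $\varepsilon\asymp 1/\log^2|A|$, lets $A_i'$ be the abscissae with $r_{P_i+A_i}$ below threshold (so $|A_i\setminus A_i'|<\varepsilon|A_i|$ by Markov), and asks whether $A_i'$ still supports a $2^{-k}$-fraction of the graph $G_i=\{(a,b):a-b\in P_i\}$. If yes, stop with $B=A_i$; a further trim gives $B''\subseteq A_i'$ with $r_{P_i+A_i}$ bounded above \emph{and} below by $\approx|G_i|/|A_i|$. If no, the high-degree set carries most of $G_i$, and one shows
\[
E_k^+(A_i')\le\Bigl(1-\tfrac{c_k}{\log|A|}\Bigr)E_k^+(A_i),
\]
so after at most $\varepsilon^{-1}\asymp\log^2|A|$ steps the energy would drop below $|A_{I_0}|^2\gg_k|A|^2$, a contradiction. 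The point is that the iteration is driven by the \emph{energy} going down, not by a concentration ratio going up; this is what lets you keep $|A_{j}|\ge(1-\varepsilon)^j|A|\gg_k|A|$ throughout.
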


Here, the subscripts in $\gtrsim_k$, $\gg_k$ or $\approx_k$ means that the implied constant may depend on $k$.

\begin{proof}
Denote $A_0=A$ and we find sets $A_1,A_2,\ldots$ by iteration. For $i\geq 0$, we can obtain by standard arguments a number $1\leq t_i\leq |A_i|$ and a set $P_i=\{x\in A_i-A_i:\, t_i\leq r_{A_i-A_i}(x)< 2 t_i\}$ such that $ |P_i|t_i^k \leq E_k^+(A_i) < |P_i|(2t_i)^k \log |A_i|/\log 2$. Then the set $G_i:=\{(a,b)\in A_i\times A_i:\, a-b\in P_i\}$ satisfies that $|P_i|t_i\leq |G_i|< 2|P_i|t_i$. Take $\varepsilon = (\log 2)(k2^{k+1}\log^2 |A|)^{-1}$, define a set of abscissae
\[
A_i^\prime = \left\{a\in A_i:\, r_{P_i+A_i}(a)\leq \frac{|G_i|}{\varepsilon |A_i|}\right\}.
\]
Then
\[
|A_i\setminus A_i^\prime| \cdot\frac{|G_i|}{\varepsilon |A_i|} < \#\{(a,b)\in G_i:\, a\notin A_i^\prime\}\leq |G_i|.
\]
It follows that $|A_i\setminus A_i^\prime| < \varepsilon |A_i|$, and then $|A_i^\prime|> (1-\varepsilon)|A_i|$.

Denote $G_i^\prime:=\{(a,b)\in G_i:\, a\in A_i^\prime\}$. If $|G_i^\prime|< |G_i|/2^k$ for some $i\geq 0$, then we set $A_{i+1}=A^\prime_i$ and proceed with $i+1$ instead of $i$, otherwise we set $B=A_i$, $B^\prime=A^\prime_i$ and terminate the process. That is to say, we discard a small set with proportion $\varepsilon$ each time.

Put $I_0=[\varepsilon^{-1}]$. Now we show that this process will stop within $I \leq I_0$ steps. Suppose that this is not the case, then $|G_i^\prime|< |G_i|/2^k$ for all $0\leq i\leq I_0$. Note that $|A_{I_0}|\gg (1-\varepsilon)^{\varepsilon^{-1}}|A| \gg_k |A|$. On the other hand, let us compare the energy-terms in $E_k^+(A_i)$ supported on $(G_i^\prime)^k$ and on $G_i^k$ for each $0\leq i\leq I_0-1$:
\begin{eqnarray*}
&\sum\limits_{x\in P_i}\#\{(a,b)\in A_i^\prime \times A_i:\, a-b=x\}^k \leq (2t_i)^{k-1} |G_i^\prime|\\
&\quad\quad\quad\quad\quad <\frac{t_i^{k-1}|G_i|}{2}  \leq  \frac{1}{2}\sum\limits_{x\in P_i}\#\{(a,b)\in A_i \times A_i:\, a-b=x\}^k.
\end{eqnarray*}
Now we count the number of terms in $E_k^+(A_i)$ which we would like to discard in each step of the iterating process, i.e.,
\begin{eqnarray*}
&&\#\left\{(a_1,b_1,\ldots,a_k,b_k)\in G_i^k\setminus (G_i^\prime)^k:\, a_1-b_1=\ldots =a_k-b_k\right\}\\
&&= \sum\limits_{x\in P_i}\#\{(a,b)\in A_i \times A_i:\, a-b=x\}^k-\sum\limits_{x\in P_i}\#\{(a,b)\in A_i^\prime \times A_i:\, a-b=x\}^k\\
&&\geq \frac{1}{2} \sum\limits_{x\in P_i}\#\{(a,b)\in A_i \times A_i:\, a-b=x\}^k \geq \frac{1}{2}|P_i|t_i^k >\frac{(\log 2) E_k^+(A_i)}{2^{k+1}\log |A_i|}.
\end{eqnarray*}
We emphasis that, any discarded energy-term $(a_1,b_1,\ldots,a_k,b_k)$ has at least one component $(a_j,b_j)$ with abscissa not in $A_i^\prime$. So the energy-terms counted by $E_k^+(A_i^\prime)$ all remains. We deduce that
\begin{align*}
E_k^+(A_{i+1})=E_k^+(A_i^\prime) &= \#\left\{(a_1,b_1,\ldots,a_k,b_k)\in (A_i^\prime\times A_i^\prime)^k:\, a_1-b_1=\ldots =a_k-b_k\right\}\\
&\leq \#\left\{(a_1,b_1,\ldots,a_k,b_k)\in G_i^k:\, a_1-b_1=\ldots =a_k-b_k\right\}\\
&\quad -\#\left\{(a_1,b_1,\ldots,a_k,b_k)\in G_i^k\setminus (G_i^\prime)^k:\, a_1-b_1=\ldots =a_k-b_k\right\}\\
&\leq \left(1-\frac{\log 2}{2^{k+1}\log |A|}\right)E_k^+(A_i)
\end{align*}
for all $0\leq i\leq I_0-1$. Recalling that $\varepsilon = (\log 2)(k2^{k+1}\log^2 |A|)^{-1}$, we obtain
\[
|A|^2\ll_k |A_{I_0}|^2 \leq E_k^+(A_{I_0}) \ll \left(1-\frac{\log 2}{2^{k+1}\log |A|}\right)^{\varepsilon^{-1}} \cdot E_k^+(A) \ll |A|^{-k}|A|^{k+1}\ll |A|,
\]
which is a contradiction.

When the process stops, say, at the $I$-th step, with $B=A_I$ and $B^\prime=A_I^\prime$. Then $|B|\gg_k |A|$. Set
\[
B^{\prime\prime} = \{x\in B^\prime:\, r_{P_I+B}(a)\geq |G_I|/(2^{k+1}|B|)\}.
\]
Then
\[
\#\{(a,b)\in G_I:\, a\in B^\prime\setminus B^{\prime\prime}\} \leq \frac{|G_I|}{2^{k+1}|B|} |B^\prime| \leq \frac{|G_I|}{2^{k+1}}.
\]
It follows that
\[
\#\{(a,b)\in G_I:\, a\in B^{\prime\prime}\} \geq |G_I^\prime| - \frac{|G_I|}{2^{k+1}} \geq \frac{|G_I|}{2^{k}}-\frac{|G_I|}{2^{k+1}} \gg_k |G_I|.
\]
On the other hand, since $B^{\prime\prime}\subseteq B^\prime$, recalling the definition of $B^\prime$, we have
\[
\#\{(a,b)\in G_I:\, a\in B^{\prime\prime}\} \leq |B^{\prime\prime}|\frac{|G_I|}{\varepsilon |B|}.
\]
Hence $|B^{\prime\prime}|\gg_k \varepsilon |B| \gtrsim_k |A|$. The proof is completed.
\end{proof}

Now let us see the effect of above regularization lemma on the relaxed version of Balog-Wooley decomposition.

\begin{proposition}
Let $A\subseteq \bR$ be finite. Then there are sets $B^{\prime\prime}\subseteq B\subseteq A$  with $|B^{\prime\prime}|\gtrsim |B|\gg |A|$, such that
\[
E_3^+(B)^4 E^\times (B^{\prime\prime})^3\lesssim |A|^{22}.
\]
\end{proposition}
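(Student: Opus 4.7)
The plan is to run the structure of the proof of Lemma \ref{lemma_E_3^+_basic}, but feed it through the regularization Lemma \ref{lem_regularization} with $k=3$ in place of the dyadic pigeonhole on abscissae and ordinates. Applying Lemma \ref{lem_regularization} yields $B''\subseteq B\subseteq A$ with $|B''|\gtrsim |B|\gg |A|$ (so in particular $|B|\approx |A|$), a threshold $t\leq |B|$, and a popular level set $P=\{x\in B-B:\, t\leq r_{B-B}(x)<2t\}$ satisfying $E_3^+(B)\approx |P|t^3$ together with the uniform bound $r_{P+B}(b)\approx |P|t/|A|$ for every $b\in B''$. Because $B-B$ is symmetric one has $-P=P$, so the same estimate holds for $r_{B-P}(b)$.

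The payoff is that the single set $B''$, of size $\gg |A|$, now simultaneously plays the roles of the two smaller sets $A_1$ and $A_2$ from Lemma \ref{lemma_E_3^+_basic}. For each quadruple $(b_1,b_2,b_3,b_4)\in (B'')^4$ counted by $E^\times(B'')$, every $b_i$ admits $\gtrsim |P|t/|A|$ representations as $a_i-p_i$ with $(a_i,p_i)\in B\times P$, and the lifted octuples automatically satisfy $(a_1-p_1)(a_2-p_2)=(a_3-p_3)(a_4-p_4)$ with common ratio in $Z:=B''/B''$. Hence
\[
E^\times(B'')\left(\frac{|P|t}{|A|}\right)^4 \lesssim N,
\]
where $N$ counts such octuples in $B^4\times P^4$ with the prescribed ratio constraint. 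Since $|Z|\leq |B''|^2\leq |A|^2$, this $N$ is exactly of the type bounded by Theorem \ref{thm_asy_2}, or, in the opposite range of parameters, by the symmetric estimate \eqref{eq_sym_2}.

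It then remains to split on the size of $|P|$. When $|P|\lesssim |A|$, the symmetric bound gives $N\lesssim |B|^3|P|^3\approx |A|^3|P|^3$; substituting above and using $|P|\lesssim |A|$ produces $E^\times(B'')|P|^{4/3}t^4\lesssim |A|^{22/3}$. When $|P|\gtrsim |A|$, then $|Z|\leq |A|^2\lesssim \min(|B|,|P|)^2$, so Theorem \ref{thm_asy_2} applies (taking the smaller of $B,P$ as $A_1$) and gives $N\lesssim |A|^{10/3}|P|^{8/3}$, again leading to $E^\times(B'')|P|^{4/3}t^4\lesssim |A|^{22/3}$. Since $E_3^+(B)^{4/3}\approx |P|^{4/3}t^4$, raising to the appropriate powers yields the claimed $E_3^+(B)^4 E^\times(B'')^3\lesssim |A|^{22}$. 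The main obstacle is bookkeeping rather than a new geometric input: one must verify that the uniform multiplicity $|P|t/|A|$ provided by Lemma \ref{lem_regularization} genuinely replaces both dyadic parameters $q_1,q_2$ appearing in the proof of Lemma \ref{lemma_E_3^+_basic}, which is precisely the strength of that regularization beyond a standard popularity argument.
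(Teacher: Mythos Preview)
Your proposal is correct and follows essentially the same route as the paper's own proof: apply Lemma~\ref{lem_regularization} with $k=3$, then rerun the energy-to-octuple argument of Lemma~\ref{lemma_E_3^+_basic} with $B''$ playing the role of both $A_1$ and $A_2$, and finish by splitting on whether $|P|$ is below or above $|B|\approx|A|$ and invoking \eqref{eq_sym_2} or Theorem~\ref{thm_asy_2} accordingly. Your explicit remark that $P=-P$ (so that the regularity $r_{P+B}(b)\approx|P|t/|A|$ also controls $r_{B-P}(b)$) is a detail the paper leaves implicit but which is indeed needed to make the lift from $E^\times(B'')$ to $N$ go through.
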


\begin{proof}
Given the set $A$ and the moment $k=3$ of the additive energy, we obtain sets $B$, $B^{\prime\prime}$, $P$ and the number $t$ as in Lemma \ref{lem_regularization}. Then we proceed as in the proof of Lemma \ref{lemma_E_3^+_basic}. Recall that $E_3^+(B)\approx |P|t^3$ and $r_{P+B}(b) \approx |P|t|A|^{-1}$ for any $b\in B^{\prime\prime}$. It follows that
\[
E^\times (B^{\prime\prime}) \left(\frac{|P|t}{|A|}\right)^4 \lesssim
\begin{cases}
|P|^3|B|^3,\quad &\text{ in general},\\
|P|^{8/3}|B|^{10/3},\quad &\text{ if } |P|\geq |B|.
\end{cases}
\]
The conclusion follows by similar arguments.
\end{proof}

\subsection*{Acknowledgements}
The author is especially grateful to Professor Misha Rudnev, who kindly shared his experiences on asymmetric estimates and provided the details of the regularization lemmas. The author also thanks Zhenchao Ge for acknowledging important information. This work is supported by National Natural Science Foundation of China (Grant No. 11701549).


\normalsize


\begin{thebibliography}{[HD82]}




\normalsize
\baselineskip=17pt





\bibitem[BT17]{BalWoo17} {  A. Balog and T. Wooley}, \emph{A low-energy decomposition theorem}, {  Q. J. Math. }68 (2017) 207--226.

\bibitem[B05]{Bou05} {  J. Bourgain}, \emph{More on sum-product phenomenon in prime fields and its applications}, {  Int. J. Number Theory }1 (2005) 1--32.
(2005), 1-32.

\bibitem[E97]{Ele97} {  G. Elekes}, \emph{On the number of sums and products}, {  Acta Arith. }81 (1997) 365--367.

\bibitem[ES83]{ErSz83} {  P. Erd\H{o}s and E. Szemer\'{e}di}, \emph{On sums and products of integers}, {  Studies in pure mathematics }213--218, Birkhauser, Basel, 1983.

\bibitem[F98]{For98} {  K. Ford}, \emph{Sums and products from a finite set of real numbers}, {  Ramanujan J. }2 (1998) 59--66.

\bibitem[GS10]{GarShe10} {  M.Z. Garaev and C.Y. Shen}, \emph{On the size of the set $A(A +1)$}, {  Math. Z. } 265 (2010) 125--132.

\bibitem[HRZ18]{HRZ18} {  B. Hanson, O. Roche-Newton and D. Zhelezov}, \emph{On iterated product sets with shifts}, {  Mathematika }65 (2018) 831--850.

\bibitem[HRZ19]{HRZ19} {  B. Hanson, O. Roche-Newton and D. Zhelezov}, \emph{On iterated product sets with shifts II}, arXiv:1806.01697.

\bibitem[KR13]{KonRud13} {  S. Konyagin and M. Rudnev}, \emph{On new sum-product type estimates}, {  SIAM J. Discrete Math. }27 (2013) 973--990.


\bibitem[KS15]{KonShk15} {  S. Konyagin and I. Shkredov}, \emph{On sum sets of sets, having small product set}, {  Proc. Steklov Inst. Math. }290 (2015) 288--299.

\bibitem[KS16]{KonShk16} {  S. Konyagin and I. Shkredov}, \emph{New results on sum-products in $\bR$}, {  Transactions of Steklov Mathematical Institute }294 (2016) 87--98.


\bibitem[LS10]{LiShen10} {  L. Li and J. Shen}, \emph{A sum-division estimate of reals}, {  Proc. Amer. Math. Soc. }138 (2010) 101--104.

\bibitem[MRS13]{MRS14} {  B. Murphy, O. Roche-Newton and I. Shkredov}, \emph{Variations of the sum-product problem}, {  Siam Journal on Discrete Mathematics }29 (2013) 514--540.

\bibitem[MRSS19]{MRSS19} {  B. Murphy, O. Roche-Newton, I. Shkredov, and Yu. N. Shteinikov}, \emph{On the few products, many sums problem}, {  J. Th. Nombr. de Bordeaux }31 (2019) 573--602.

\bibitem[N97]{Nat97} {  M. Nathanson}, \emph{On sums and products of integers}, {  Proc. Amer. Math. Soc. } 125 (1997) 9--16.

\bibitem[OSS]{OSS19} {  K.I. Olmezov, A.S. Semchankau and I.D. Shkredov}, \emph{On popular sums and differences of sets with small products}, arXiv preprint arXiv:1911.12005 (2019).


\bibitem[RSS20]{RSS16} {  M. Rudnev, I. Shkredov and S. Stevens}, \emph{On an energy variant of the sum-product conjecture}, {  Rev. Mat. Iberoam. }36 (2020) 207--232.

\bibitem[RS]{RudSte20} {  M. Rudnev and S. Stevens}, \emph{An update on the sum-product problem}, Arxiv:2005.11145(2020).

\bibitem[S19]{Sha18} {  G. Shakan}, \emph{On higher energy decompositions and the sum-product phenomenon}, {  Math. Proc. Cambridge Philos. Soc. }167 (2019) 599--617.


\bibitem[S13]{Shk13} {  I. Shkredov}, \emph{Some new results on higher energies}, {  Trans. Moscow Math. Soc. }74 (2013) 35--73.


\bibitem[S15]{Shk15} {  I. Shkredov}, \emph{On sums of Szemer\'{e}di-Trotter sets}, {  Proc. Steklov Inst. Math. }289 (2015) 300--309.

\bibitem[S17a]{Shk17} {  I. Shkredov}, \emph{Some remarks on sets with small quotient set}, {  Sb. Math. }208 (2017) 144--158.

\bibitem[S17b]{Shk18} {  I. Shkredov}, \emph{Some remarks on the Balog-Wooley decomposition theorem and quantities $D^+$, $D^\times$}, {  Proc. Steklov Inst. Math. }298 (2017) 74¨C-90.

\bibitem[S05]{Sol05} {  J. Solymosi}, \emph{On the number of sums and products}, {  Bull. Lond. Math. Soc. }37 (2005) 491--494.

\bibitem[S09]{Sol09} {  J. Solymosi}, \emph{Bounding multiplicative energy by the sumset}, {  Adv. Math. }222 (2009) 402--408.

\bibitem[TV06]{TV} {  T. Tao and V. Vu}, {\emph Additive combinatorics}, (Cambridge University Press, Cambridge, 2006.)

\bibitem[X15]{Xue15} {  B. Xue}, \emph{A note on Solymosi's sum-product estimate for ordered fields}, {  Ukrainian Math. J. }66 (2015) 1408--1413.

\end{thebibliography}
\end{document}